\numberwithin{equation}{section}
\newtheorem{thm}{Theorem}
\newtheorem{lem}{Lemma}
\newtheorem{cor}{Corollary}
\newtheorem{rmk}{Remark}
\newtheorem{ass}{Assumption}
\newcommand {\emptycomment}[1]{}
\newcommand{\be }{\begin{equation}}
\newcommand{\ee }{\end{equation}}
\newcommand{\pf}{\noindent{\bf Proof.}\ }
\newcommand{\G}{\mathbb G}
\newcommand{\frkd}{\mathfrak d}
\newcommand{\g}{\mathfrak g}
\newcommand{\nono}{\nonumber}
\newcommand{\noi}{\noindent}
\newcommand{\w}{\widetilde}
\newcommand{\f}{\frac}
\newcommand{\ttt}{\theta}
\newcommand{\aaa}{\alpha}
\newcommand{\nn}{\langle}
\newcommand{\mm}{\rangle}
\newcommand{\mbb}{\mathbb}
\newcommand{\eee}{\epsilon}
\newcommand{\ooo}{\omega}
\newcommand{\kkk}{\kappa}
\def\bea{\begin{eqnarray}}
\def\eea{\end{eqnarray}}
\def\be{\begin{equation}}
\def\ee{\end{equation}}
\def\blm{\begin{lem}}
\def\elm{\end{lem}}
\def\p{\mathcal{P}}
\def\b{\mathcal{B}}
\def\t{\mathcal{T}}
\def\c{\mathcal{C}}
\def\o{\mathcal{O}}
\def\G{G_{\mu,k}^{(i_k)}}
\def\pg{\p_{i_k}(x_k,U_{i_k}\G,\aaa_k)}
\def\pf{\p_{i_k}(x_k,\nabla f(x_k),\aaa_k)}
\def\pfm{\p_{i_k}(x_k,\nabla f_{\mu}(x_k),\aaa_k)}
\begin{document}
\title{
{Zeroth-Order Stochastic Block Coordinate Type Methods for Nonconvex Optimization\thanks{This work was partially supported by Research grants Council of the Hong Kong Special Administrative Region, China (CityU 11200717, CityU 11201518). Z. Yu and D. W. C. Ho are with the Department of Mathematics, City University of
Hong Kong, Kowloon, Hong Kong (e-mail: zhanyu2-c@my.cityu.edu.hk; madaniel@cityu.edu.hk).}}
 }\vspace{2mm}
\author{ Zhan Yu, Daniel W.C. Ho\emph{ }\emph{ }}
\date{}



\maketitle

\begin{abstract}
We study (constrained) nonconvex (composite) optimization problems where the decision variables vector can be split into blocks of variables. Random block projection is a popular technique to handle this kind of problem for its remarkable reduction of the computational cost from the projection. However, this powerful method has not been proposed for the situation that first-order information is prohibited and only zeroth-order information is available. In this paper, we propose to develop different classes of zeroth-order stochastic block coordinate type methods. Zeroth-order block coordinate descent (ZS-BCD) is proposed for solving unconstrained nonconvex optimization problem. For composite optimization, we establish the zeroth-order stochastic block mirror descent (ZS-BMD) and its associated two-phase method to achieve the complexity bound for the $(\eee,\Lambda)$-solution of the composite optimization problem. Furthermore, we also establish zeroth-order stochastic block coordinate conditional gradient (ZS-BCCG) method for nonconvex (composite) optimization. By implementing ZS-BCCG method, in each iteration, only (approximate) linear programming subproblem needs to be solved on a random block instead of a rather costly projection subproblem on the whole decision space, in contrast to the existing traditional stochastic approximation methods. In what follows, an approximate ZS-BCCG method and corresponding two-phase ZS-BCCG method are proposed. This is also the first time that a two-phase BCCG method has been developed to achieve the $(\eee, \Lambda)$-solution of nonconvex composite optimization problem. To the best of our knowledge, the proposed results in this paper are new in stochastic nonconvex (composite) optimization literature.
\end{abstract}

\section{Introduction}
This paper is concerned with the nonlinear stochastic optimization (NSO) problem given by
\be
f^{*}=\min_{x\in X}\bigg\{f(x)=\mbb E_{\xi}\big[F(x,\xi)\big]=\int_\Omega F(x,\xi)dP(\xi)\bigg\},\label{pro1}
\ee
and a class of stochastic composite optimization problem give by
\be
\Phi^{*}=\min_{x\in X}\big\{\Phi(x)=f(x)+\chi(x)\big\}, \label{pro2}
\ee
where $X\subseteq \mbb R^n$ is a closed convex set, $\xi$ is a random variable supported on sample space $\Omega\subseteq \mbb R^n$, $F(x,\xi)$ is a Borel measurable function on $X\times\Omega$, and almost surely for every $\xi$, $F(\cdot,\xi):X\rightarrow\mbb R$ is continuous. $f$ in \eqref{pro2} is defined as in \eqref{pro1}. $\chi$ is a convex regularization function . Optimization on nonconvex objective function $f$ and composite function $\Phi$ has played an important role in statistical machine learning and network engineering. Recent years have witnessed a resurgence of research interest on this topic following the stochastic approximation (SA) technique (see e.g. \cite{sa3,  zero first, compotp, sa5, sa2, sa1, sa4}). In this paper, we are interested in the case when the feasible set $X$ is assumed to have the block structure
\be
\nono X=X_1\times X_2\times\cdots\times X_b,\label{block structure}
\ee
in which $X\subseteq\mbb R^{n_s}$, $s=1,2,...,b$, are closed convex sets and $n_1+n_2+\cdots+n_b=n$. Also, the regularization function $\chi$ is assumed to be block separable. For solving the problem in which $X$ has the block structure \eqref{block structure}, different classes of block coordinate decent (BCD) based methods have been developed quickly (see e.g., \cite{b1, b7, rule1, b8, b3, rule2, b4, b5, b0, b2, b6}). The coordinate gradient descent method is introduced in \cite{b0}. The complexity of the BCD type methods has been widely studied in recent years (see e.g., \cite{rule1, b3,rule2, b4}). Meanwhile, many recent studies concentrate on the acceleration of the BCD type method (see e.g., \cite{b6}). Stochastic type BCD is a popular method to solve problem \eqref{pro1} in which $X$ has structure \eqref{block structure} (see e.g., \cite{rule1,b2}). This type of method depends on the access to different level of stochastic oracle information. There are three common oracles: zeroth-order oracle (function queries), first-order (gradient queries), second-order oracle (Hessian queries).

However, the current existing stochastic BCD methods are all at least first-order type. Moreover, there are increasingly more situations in which only noisy objective function information is available in statistical machine learning, and first-order information can not be accessed. Then the first-order BCD method can not be used in these situations. Meanwhile, the study of zeroth-order theory is lacking in these situations and has not been proposed in coordinate decent literature. These facts motivate us to consider the possibility of establishing  some classes of zeroth-order block coordinate type methods to solve problems \eqref{pro1}, \eqref{pro2} in these settings.

On the other hand, the classical conditional gradient (CG) method proposed by Frank and Wolfe \cite{firstCG}, has resurged in recent years (see e.g., \cite{c1, c2, c4, sliding, c3, random scheme2}). The CG method is computationally cheaper in many situations, since the CG algorithm scheme turns the optimization problem into a linear optimization subproblem rather than a costly projection to feasible set $X$. In some cases,  a general projection   might be computationally prohibited in practice.  Hence, this advantage motivates us to consider that, when the feasible set $X$ has the block structure \eqref{block structure}, is it possible to incorporate the random block decomposition with the CG scheme? i.e. In each iteration, the conditional gradient procedure is operated on only one random block. If this can be realized, we only need to solve the linear optimization subproblem on one block instead of the whole decision region $X$, further saving the computational cost. We note that this kind of block coordinate CG algorithm is still unexplored before. In this paper, we give a positive answer to the above question.

The main goal in this paper is to develop several classes of zeroth-order stochastic block coordinate type methods to solve \eqref{pro1} and \eqref{pro2}. The randomization scheme for outputting the random solution from sequence trajectory has been widely used in several recent researches (see e.g., \cite{c1, rule1, zero first, compotp, random scheme1, random scheme2}). Meanwhile, this randomization scheme is still in the practice stage of presenting convergence results for (nonconvex) stochastic optimization, it deserves to be further developed in aforementioned aspects. We will establish several theoretical results via the randomization scheme and analyze the iteration complexity of them in different aspects.  Inspired by the random selection rule in \cite{rule3,rule1,rule2}, we present a zeroth-order stochastic BCD (ZS-BCD) to solve unconstrained NSO problem \eqref{pro1}, then by incorporating the i.i.d random block selection procedure, we develop a zeroth-order stochastic block mirror descent (ZS-BMD) to solve well-known nonconvex composite optimization problem \eqref{pro2}. In what follows, a class of zeroth-order stochastic block coordinate conditional gradient method (ZS-BCCG) is proposed for solving \eqref{pro1} and \eqref{pro2}.

Two-phase optimization technique has been shown to be a powerful tool in searching for $(\epsilon, \Lambda)$-solution of NSO (see e.g., \cite{zero first, compotp, sliding}), i.e., a point $\bar{x}$ such that $Prob\{\|\nabla f(\bar{x})\|^2>\epsilon\}\leq \Lambda$ for some $\epsilon>0$ and $\Lambda\in (0,1)$. In \cite{zero first}, Ghadimi and Lan develop a two-phase stochastic first and zeroth-order method for unconstrained stochastic optimization. Then in \cite{compotp}, they further extend this technique to composite optimization scenario and develop a two-phase stochastic projected gradient method.  In \cite{sliding}, Lan and Zhou establish a two-phase conditional gradient type method via a sliding procedure to solve a convex programming problem. All these two-phase methods have the advantage of improving the direct complexity bound coming from corresponding Markov inequality on confidence level $\Lambda$. However, to the best of our knowledge, any two-phase block coordinate method has not been established for nonconvex (composite) optimization yet. Moreover, zeroth-order result in coordinate descent literature is also quite limited and needs to be explored, it makes sense to establish two-phase zeroth-order block coordinate type methods.  Inspired by above two-phase techniques, we propose to establish a two-phase ZS-BMD (2-ZS-BMD) method and two-phase ZS-BCCG (2-ZS-BCCG) method for the constrained composite nonconvex optimization problem, and provide complexity analysis for them respectively.

The contributions of this paper mainly contain the following aspects:

(\textbf{i}) Zeroth-order stochastic block decomposition is first introduced in convex and nonconvex optimization problem. In contrast to existing block coordinate type algorithms, the proposed methods in this paper remove the need for evaluation of the first-order information of the gradient that the former stochastic BCD methods used, making it potentially flexible to solve some NSO problems where the gradient is costly to evaluate. For constrained optimization problem, evaluation of a single random block component of zeroth-order oracle is implemented. Meanwhile, Zeroth-order projection or linear optimization is performed on only one random block instead of the whole $X$.  The advantage of the procedure makes the proposed methods save the iteration cost coming from the projection subproblem effectively.

(\textbf{ii}) For unconstrained stochastic optimization \eqref{pro1}, we develop a ZS-BCD method and give convergence analysis for both nonconvex and convex objective functions. We propose two new fundamental classes of stochastic block coordinate type methods (ZS-BMD with 2-ZS-BMD and ZS-BCCG with 2-ZS-BCCG) and provide complexity analysis for both methods. For ZS-BMD, we show that, to find an $\epsilon$-stationary point of composite problem \eqref{pro2}, the total number of calls to stochastic zeroth-order oracle (SZO) performed by ZS-BMD can be bounded by $\o(b^2n/\epsilon^2)$. For ZS-BCCG, we achieve an $\o(b^4n/\epsilon^4)$ complexity bound in a performance of classical Frank-Wolfe gap. We further develop an approximate ZS-BCCG algorithm and achieve an improved $\o(b^2n/\eee^2)$ complexity bound in the performance of generalized gradient. These results are new in this literature. The results give explicit dependency of complexity on both block index $b$ and dimension index $n$, which do not appear in existing  results. Meanwhile, this is the first time to establish a class of random block projection based stochastic CG method, in which the SZO is also considered.

(\textbf{iii}) To the best of our knowledge, in contrast to the existing two-phase optimization schemes established for SA algorithms. This is the first time to implement a two-phase procedure for stochastic block coordinate type method. For 2-ZS-BMD and 2-ZS-BCCG, we show that, to find an $(\epsilon,\Lambda)$-solution of problem \eqref{pro2}, the total number of calls to SZO achieve the
\be
\o\bigg\{\f{bn}{\eee}\log_2(\f{1}{\Lambda})+\f{b^2n(M^2+\sigma^2)}{\eee^2}\log_2(\f{1}{\Lambda})+\f{n(M^2+\sigma^2)}{\Lambda\eee}\log_2^2(\f{1}{\Lambda})\bigg\} \label{commain}
\ee
complexity bound (bound parameter $M$ s.t. $\|\nabla f\|\leq M$ and gradient variance parameter $\sigma$). The complexity results are new in two-phase optimization literature, they improve several recent two-phase nonconvex optimization results \cite{zero first, compotp} to block coordinate setting. In contrast to previous complexity bound, the block index factor $b$ and $b^2$ appearing in the representation accurately indicate the influence of the block projection procedure on the complexity. In addition, detailed form and analysis of the complexity on block Lipschitz estimations and other parameters are also achieved.

\textbf{Notation:} Denote the n-dimension Euclidean space by $\mathbb R^n$, let $\mathbb R^{n_s}$, $s=1, 2,..., b$ be the Euclidean spaces with norm $\|\cdot\|_i$ induced by the inner product $\nn \cdot, \cdot \mm$ such that $n_1+n_2+\cdots+n_b=n$. For a matrix $M\in\mathbb R^{n\times n}$, denote the element in $i$th row and $j$th column by $[M]_{ij}$, denote the transpose of $M$ by $M^T$. Denote the identity matrix in $\mathbb R^n$ by $I_n$  and let $U_s\in\mathbb R^{n\times n_i}$, $s=1, 2, ..., b$ be the sets of matrices such that $(U_1, U_2,... , U_b)=I_n$. For a vector $x\in\mathbb R^n$, denote its $s$th block by $x^{(s)}=U_s^Tx$, $s=1,2,...,b$. For a differentiable function $f$, denote the gradient of $f$ by $\nabla f(x)$, and the $s$th block of $\nabla f(x)$ by $\nabla_s f(x)$. Denote the class of functions which has continuous $L$-Lipschitz gradient by $C_{L}^{1,1}$. For a positive real number $c$, $\lceil c\rceil$ and $\lfloor c\rfloor$ denote the smallest integer bigger than $c$ and the biggest integer smaller than $c$.


\section{Preliminary}
The main assumptions of this paper are listed in this section. Additional assumptions are added in other section when needed.
\begin{ass}\label{ass1}
For any $x\in X$, the zeroth-order oracle outputs the estimator of $f$ such that $\mbb E[F(x,\xi)]=f(x)$, $\mbb E[\nabla F(x,\xi)]=\nabla f(x)$ and $\mbb E[\|\nabla F(x,\xi)-\nabla f(x)\|^2]\leq\sigma^2$.
\end{ass}
\begin{ass}\label{ass2}
Almost surely for any $\xi$, $F$ has the $L_f$-Lipschitz continuous gradient, i.e.,$\|\nabla F(x,\xi)-\nabla F(y,\xi)\|\leq L_f\|x-y\|$. Almost surely for any $\xi$, $F$ also satisfies the block gradient Lipschitz condition:
\begin{equation}
\|\nabla_sF(x+U_se_s,\xi)-\nabla_sF(x,\xi)\|_s\leq L_s\|e_s\|_s , s=1,2,...,b. \label{block lip F}
\end{equation}
\end{ass}
\noindent Assumption \ref{ass2} directly implies $f\in C_{L_f}^{1,1}$ and
\be
\|\nabla_sf(x+U_se_s)-\nabla_sf(x)\|_s\leq L_s\|e_s\|_s , s=1,2,...,b, \label{block lip}
\ee
where $e_s$ is the $s$th standard basis vector in $\mathbb R^{n_s}$. Now we list several results for the zeroth-order gradient estimator and smoothing function. Let $u$ be an $n$-dimensional standard Gaussian random vector and $\mu$ be the smoothing parameter. The smoothing function $f_{\mu}(x)$ of $f(x)$ is defined as the convolution of the Gaussian kernel and $f$:
\be\label{deffmu}
\nono f_{\mu}(x)=\mbb E[f(x+\mu u)]=\f{1}{(2\pi)^{\f{n}{2}}}\int f(x+\mu u)e^{-\f{1}{2}\|u\|^2}du.
\ee
Nesterov \cite{Nesterov} has shown that
\be\label{gradfmu}
\nabla f_{\mu}(x)=\f{1}{(2\pi)^{\f{n}{2}}}\int\f{f(x+\mu u)-f(x)}{\mu}ue^{-\f{1}{2}\|u\|^2}du,
\ee
and suggests to consider the stochastic gradient of $f_{\mu}(x)$ given by
\be
G_{\mu}(x,\xi,u)=\f{F(x+\mu u,\xi)-F(x,\xi)}{\mu}u,
\ee
which is an unbiased estimator of $\nabla f_{\mu}(x)$.
\noindent The main property of $f_{\mu}$ is described by the following lemma due to Nesterov \cite{Nesterov}.
\blm\label{Nes Lem}
For any $f\in C_{L_f}^{1,1}$,

(a)$f_{\mu}\in C_{L_f}^{1,1}$, $\mbb E_{\xi,u}[G_{\mu}(x, \xi, u)]=\nabla f_{\mu}(x)$.

(b)for any $x\in \mbb R^n,$
\bea
&&\ \ \ \ \ \ |f_{\mu}(x)-f(x)|\leq\f{\mu^2}{2}L_fn, \\
&&\|\nabla f_{\mu}(x)-\nabla f(x)\|\leq\f{\mu}{2}L_f(n+3)^{\f{3}{2}}.\label{fmu-f}
\eea

(c)For any $x\in\mbb R^n$,
\be
\f{1}{\mu^2}\mbb E_{u}\big[\{f(x+\mu u)-f(x)\}^2\|u\|^2\big]\leq\f{\mu^2}{2}L_f^2(n+6)^3+2(n+4)\|\nabla f(x)\|^2.
\ee
\elm
\noindent By using relation $(a+b)^2\leq2a^2+2b^2$, \eqref{fmu-f} directly implies the following estimates
\bea
&&\|\nabla_{\mu}f(x)\|^2\leq2\|\nabla f(x)\|^2+\f{\mu^2}{2}L_f^2(n+3)^2, \label{t1}\\
&&\ \ \|\nabla f(x)\|^2\leq2\|\nabla f_{\mu}(x)\|^2+\f{\mu^2}{2}L_f^2(n+3)^2. \label{t2}
\eea
These estimates will be the crucial estimates of some of the following results.

\section{Unconstrained nonconvex optimization: ZS-BCD}
In this section, we present a class of ZS-BCD for solving unconstrained nonconvex and convex optimization version of \eqref{pro1} as follow,
\begin{eqnarray}
\min_{x\in \mbb R^n} f(x),\label{pro3}
\end{eqnarray}
where we spit $\mbb R^n$ into $\mbb R^n=\mbb R^{n_1}\times\mbb R^{n_2}\cdots\times\mbb R^{n_b}$. We give a glimpse of the proposed methods in an unconstrained setting.

\subsection{Nonconvex objective function}
\begin{framed}
\noindent\textbf{ZS-BCD method:}

\noi\textbf{Input}: Initial point $x_1\in \mbb R^n$, smoothing parameter $\mu$, total iterations $T$, stepsizes $\{\aaa_k\}$, $k\geq1$, probability mass function $P_R(\cdot)$ supported on $\{1,2,...,T\}$, probabilities $p_s\in [0,1], s=1,2,...,b$, s.t. $\sum_{s=1}^bp_s=1$.

\noi\textbf{Step} 0: Generate a random variable $i_k$ according to
\begin{equation}
Prob\{i_k=s\}=p_s, s=1,2,...,b, \label{distributep}
\end{equation}
and let $R$ be a random variable with probability mass function $P_R$.

\noi\textbf{Step} k=1,2,...,R-1: Use the Guassian random vector generator to generate $u_k$ and call the SZO to compute
\begin{equation}
G_{\mu}(x_k,\xi_k,u_k)=\f{F(x_k+\mu u_k,\xi_k)-F(x_k,\xi_k)}{\mu}u_k.
\end{equation}

\noi \underline{Update $x_{k}$} by:
\begin{eqnarray}
x_{k+1}^{(s)}&=&\left\{
  \begin{array}{ll}
    x_{k}^{(i_k)}-\aaa_kG_{\mu}^{(i_k)}(x_k,\xi_k,u_k), & \hbox{$s=i_k$;} \\
    x_{k}^{(s)}, & \hbox{$s\neq i_k$.}
  \end{array}
\right.
\end{eqnarray}

\noi\textbf{Output} $x_R$.
\end{framed}
\begin{thm}\label{thm1}
Let $\{x_k\}$ be generated by ZS-BCD. Let the stepsizes $\{\aaa_k\}$ are chosen such that $\aaa_k\leq \min_{s\in\b}\{p_s\}/[2\max_{s\in\b}\{p_sL_s\}(n+4)]$ and the probability mass function $P_R$ in the ZS-BCD is defined as
\begin{equation}
 P_R(k):=Prob\{R=k\}=\f{\aaa_k[\min_{s\in\b}\{p_s\}-2(n+4)\max_{s\in\b}\{p_sL_s\}\aaa_k]}{\sum_{k=1}^T\aaa_k[\min_{s\in\b}\{p_s\}-2(n+4)\max_{s\in\b}\{p_sL_s\}\aaa_k]}, k=1,2,...,T.
\end{equation}
Then, under Assumptions \ref{ass1} and \ref{ass2}, the following gradient estimate holds:
\begin{eqnarray}\label{uncon}
\begin{aligned}
\f{1}{L_f}\mathbb E\big[\|\nabla f(x_R)\|^2\big]&\leq&\f{1}{\sum_{k=1}^T\aaa_k[\min_{s\in\b}\{p_s\}-2(n+4)\max_{s\in\b}\{p_sL_s\}\aaa_k]}\Bigg[D_f^2  \\
&&+2\f{\max_{s\in\b}\{p_sL_s\}}{L_f}(n+4)\sigma^2\sum_{k=1}^T\aaa_k^2+2\mu^2(n+4)\times \\
&&\bigg(1+L_f(n+4)^2\sum_{k=1}^T(\f{\mathop{\min}_{s\in\b}\{p_s\}}{4}\aaa_k+\max_{s\in\b}\{p_sL_s\}\aaa_k^2)\bigg)\Bigg],
\end{aligned}
\end{eqnarray}

in which the expectation is taken with respect to $R$, $\xi_{[T]}$, $u_{[T]}$, and $D_f$ is defined as $[2(f(x_1)-f^{*})/L_f]^{\f{1}{2}}$.
\end{thm}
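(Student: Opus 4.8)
The plan is to run the standard descent-lemma argument for the smoothed function $f_\mu$, but tracking the block-coordinate randomization carefully. First I would fix the iteration index $k$ and condition on $x_k$, and apply the block Lipschitz property \eqref{block lip} (transferred to $f_\mu$ via Lemma \ref{Nes Lem}(a), since $f_\mu$ inherits the same block constants as $f$) to the update $x_{k+1} = x_k - \alpha_k U_{i_k} G_\mu^{(i_k)}(x_k,\xi_k,u_k)$. This gives
\[
f_\mu(x_{k+1}) \leq f_\mu(x_k) - \alpha_k \langle \nabla_{i_k} f_\mu(x_k), G_\mu^{(i_k)}(x_k,\xi_k,u_k)\rangle + \frac{L_{i_k}}{2}\alpha_k^2 \|G_\mu^{(i_k)}(x_k,\xi_k,u_k)\|^2 .
\]
Then I would take expectation over $i_k$ (weighted by $p_s$), over $\xi_k$, and over $u_k$. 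Using that $G_\mu$ is an unbiased estimator of $\nabla f_\mu$, the cross term becomes $-\alpha_k \sum_s p_s \|\nabla_s f_\mu(x_k)\|^2$, which I would want to lower-bound by $\alpha_k \min_s\{p_s\} \|\nabla f_\mu(x_k)\|^2$. For the quadratic term I would bound $\sum_s p_s L_s \mathbb E\|G_\mu^{(s)}\|^2 \leq \max_s\{p_s L_s\} \mathbb E\|G_\mu(x_k,\xi_k,u_k)\|^2$ and then invoke Lemma \ref{Nes Lem}(c) together with Assumption \ref{ass1} to control $\mathbb E_{\xi,u}\|G_\mu\|^2$ by something of the form $2(n+4)\|\nabla f(x_k)\|^2$ plus a $\sigma^2$ term plus a $\mu^2$ term — this is where the $(n+4)$ factor in the stepsize restriction comes from. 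Passing from $\|\nabla f(x_k)\|^2$ back to $\|\nabla f_\mu(x_k)\|^2$ (or vice versa) uses \eqref{t1}–\eqref{t2}, which injects the extra $\mu^2 L_f^2(n+3)^2$ remainder terms.

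The next step is to collect terms so that the coefficient of $\mathbb E\|\nabla f_\mu(x_k)\|^2$ (or $\|\nabla f(x_k)\|^2$) on the right-hand side is $-\alpha_k[\min_s\{p_s\} - 2(n+4)\max_s\{p_s L_s\}\alpha_k]$, which is nonnegative precisely by the stepsize assumption $\alpha_k \leq \min_s\{p_s\}/[2\max_s\{p_s L_s\}(n+4)]$. Rearranging and summing the telescoping inequality over $k=1,\dots,T$, the left side telescopes to $f_\mu(x_1) - \mathbb E[f_\mu(x_{T+1})] \leq f_\mu(x_1) - f_\mu^* \leq f(x_1) - f^* + \mu^2 L_f n$ (using Lemma \ref{Nes Lem}(b) to compare $f_\mu$ and $f$ at both ends, and $f_\mu^* \geq f^* - \frac{\mu^2}{2}L_f n$). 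This produces
\[
\sum_{k=1}^T \alpha_k\big[\min_s\{p_s\} - 2(n+4)\max_s\{p_s L_s\}\alpha_k\big]\,\mathbb E\|\nabla f_\mu(x_k)\|^2 \leq (\text{const}) + (\sigma^2\text{-term}) + (\mu^2\text{-terms}).
\]

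Finally, I would divide by $\sum_{k=1}^T \alpha_k[\min_s\{p_s\} - 2(n+4)\max_s\{p_s L_s\}\alpha_k]$ and recognize the left-hand side as exactly $\mathbb E_R\|\nabla f_\mu(x_R)\|^2$ under the given definition of $P_R$. One last application of \eqref{t2} converts $\|\nabla f_\mu\|^2$ to $\frac{1}{2}\|\nabla f\|^2$ minus a $\mu^2$ remainder, and after bundling the various $\mu^2(n+\cdot)$ and $\sigma^2$ contributions and dividing through by $L_f$ (with $D_f^2 = 2(f(x_1)-f^*)/L_f$ absorbing the constant) one arrives at \eqref{uncon}. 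The main obstacle is bookkeeping: keeping the three different "gradient norm" quantities ($\|\nabla f\|^2$, $\|\nabla f_\mu\|^2$, and $\mathbb E\|G_\mu\|^2$) consistent through the chain of inequalities, and making sure every $(n+3)$ versus $(n+4)$ versus $(n+6)$ constant and every $\mu^2$ remainder lands in the right place so that the final bound matches the stated coefficients; the conceptual content is just the descent lemma plus Nesterov's smoothing estimates.
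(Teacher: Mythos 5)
Your proposal is correct and follows essentially the same route as the paper's own proof: descent lemma for $f_\mu$ on the randomly chosen block, unbiasedness of $G_\mu$ to kill the cross term, Lemma \ref{Nes Lem}(c) with Assumption \ref{ass1} for the second moment of $G_\mu$, the estimates \eqref{t1}--\eqref{t2} to pass between $\|\nabla f\|^2$ and $\|\nabla f_\mu\|^2$, the stepsize condition to make the coefficients $\aaa_k[\min_s\{p_s\}-2(n+4)\max_s\{p_sL_s\}\aaa_k]$ nonnegative, and the definition of $P_R$ to read the weighted sum as $\mathbb E_R$. The only (immaterial) difference is that you convert to $\|\nabla f\|^2$ at the very end, whereas the paper does so inside the per-iteration estimate before summing.
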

\begin{proof}
Denote $\tau_k=(\xi_k,u_k)$, $\Delta_k=G_{\mu}(x_k,\tau_k)-\nabla f_{\mu}(x_k)$. Assumptions \ref{ass1}, \ref{ass2}, \eqref{deffmu} and Lemma \ref{Nes Lem} (a) imply that $\nabla f_{\mu}$ satisfies block Lipschitz condition with constant $L_s$, $s=1,2,...,b$. Thus the following holds for $i_k$th block,
\begin{eqnarray}
\nono f_{\mu}(x_{k+1})&\leq& f_{\mu}(x_k)+\nn\nabla_{i_k}f_{\mu}(x_k),x_{k+1}^{(i_k)}-x_k^{(i_k)}\mm+\f{L_{i_k}}{2}\|x_{k+1}^{(i_k)}-x_k^{(i_k)}\|_{i_k}^2 \\
\nono&\leq&f_{\mu}(x_k)-\aaa_k\|\nabla_{i_k}f_{\mu}(x_k)\|_{i_k}^2-\aaa_k\nn\nabla_{i_k}f_{\mu}(x_k),\Delta_k^{(i_k)}\mm+\f{\aaa_k^2}{2}L_{i_k}\|G_{\mu}^{(i_k)}(x_k,\tau_k)\|_{i_k}^2.
\end{eqnarray}
Rearranging terms, summing up above inequality from $k=1$ to $T$, using $f_{\mu}(x_{T+1})\geq f_{\mu}^{*}$, we have
\begin{equation}\label{1m}
\sum_{k=1}^T\aaa_k\|\nabla_{i_k}f_{\mu}(x_k)\|_{i_k}^2\leq f_{\mu}(x_1)-f_{\mu}^{*}-\sum_{k=1}^T\aaa_k\nn\nabla_{i_k}f_{\mu}(x_k),\Delta_k^{(i_k)}\mm+\sum_{k=1}^T\f{\aaa_k^2}{2}L_{i_k}\|G_{\mu}^{(i_k)}(x_k,\tau_k)\|_{i_k}^2.
\end{equation}
Note that
\begin{equation}
\nono\mathbb E_{i_{[k-1]},\tau_{[k-1]}}\big[\nn\nabla_{i_k}f_{\mu}(x_k),\Delta_k^{(i_k)}\mm\big]=\sum_{s=1}^bp_s\mathbb E\big[\nn\nabla_sf_{\mu}(x_k),\Delta_k^{(s)}\big\mm]=0,
\end{equation}
which follows from measurability of $\nabla_sf_{\mu}(x_k)$ with respect to the history $\tau_{[k-1]}$ and $\mathbb E_{\tau_{[k-1]}}[\Delta_k]=0$, and
\begin{eqnarray}
\nono&&\mathbb E_{i_{[k-1]}\tau_{[k-1]}}\big[L_{i_k}\|G_{\mu}^{(i_k)}(x_k,\tau_k)\|_{i_k}^2\big]=\sum_{s=1}^bp_sL_s\mathbb E_{\tau_{[k-1]}}[\|G_{\mu}^{(s)}(x_k,\tau_k)\|_{s}^2]  \\
\nono&&\leq\max_{s\in\b}\{p_sL_s\}\mathbb E_{\tau_{[k-1]}}[\|G_{\mu}(x_k,\tau_k)\|^2] \\
\nono&&\leq\max_{s\in\b}\{p_sL_s\}\bigg[2(n+4)\mathbb E_{\tau_{[k-1]}}[\|\nabla f(x_k)\|^2]+2(n+4)\sigma^2+\f{\mu^2}{2}L_f^2(n+6)^3\bigg],
\end{eqnarray}
in which the second inequality follows from Lemma \ref{Nes Lem} (c) and Assumption \ref{ass1}. Also note that
\begin{eqnarray}
\nono&&\mathbb E_{i_{[k-1]}\tau_{[k-1]}}\big[\|\nabla_{i_k}f_{\mu}(x_k)\|_{i_k}^2\big]=\sum_{s=1}^bp_s\mathbb E_{\tau_{[k-1]}}[\|\nabla_s f_{\mu}(x_k)\|_{s}^2] \\
\nono&&\geq\min_{s\in\b}p_s\mathbb E_{\tau_{[k-1]}}\big[\|\nabla f_{\mu}(x_k)\|^2\big] \geq\min_{s\in\b}\f{p_s}{2}\bigg[\mathbb E_{\tau_{[k-1]}}[\|\nabla f(x_k)\|^2]-\f{\mu^2}{2}L_f^2(n+3)^3\bigg].
\end{eqnarray}
Take expectation on both sides of \eqref{1m} with respect to $i_{[T]}$, $\tau_{[T]}$,  it follows that
\begin{eqnarray}
\nono&&\sum_{k=1}^T\aaa_k\f{\min_{s\in\b }p_s}{2}\bigg[\mathbb E_{i_{[T]},\tau_{[T]}}\|\nabla f(x_k)\|^2-\f{\mu^2}{2}L_f^2(n+3)^3\bigg]\leq f_{\mu}(x_1)-f_{\mu}^{*}+\sum_{k=1}^T\f{\aaa_k^2}{2}\max_{s\in\b}\{p_sL_s\} \\
\nono&&\times\bigg[2(n+4)\mathbb E_{i_{[T]},\tau_{[T]}}[\|\nabla f(x_k)\|^2]+2(n+4)\sigma^2+\f{\mu^2}{2}L_f^2(n+6)^3\bigg].
\end{eqnarray}
Rearranging terms, we obtain that
\begin{eqnarray}
\nono&&\sum_{k=1}^T\aaa_k[\min_{s\in\b}p_s-2(n+4)\max_{s\in\b}\{p_sL_s\}\aaa_k]\mbb E_{i_{[T]},\tau_{[T]}}[\|\nabla f(x_k)\|^2]  \\
\nono&&\leq2[f(x_1)-f^{*}]+2\mu^2L_fn \\
\nono&&\ \ \ +\max_{s\in\b}\{p_sL_s\}\bigg[2(n+4)\sigma^2+\f{\mu^2}{2}L_f(n+6)^3\bigg]\sum_{k=1}^T\aaa_k^2\\
\nono&&\ \ \ +\f{1}{2}\min_{s\in\b}\{p_s\}\cdot\mu^2L_f^2(n+3)^3\sum_{k=1}^T\aaa_k.  \\
\nono&&\leq2[f(x_1)-f^{*}]+2\max_{s\in\b}\{p_sL_s\}(n+4)\sigma^2\sum_{k=1}^T\aaa_k^2\\
\nono&&\ \ \ +2\mu^2L_f(n+4)\Bigg[1+L_f(n+4)^2\sum_{k=1}^T\bigg(\f{\min_{s\in\b}\{p_s\}}{4}\aaa_k+\max_{s\in\b}\{p_sL_s\}\aaa_k^2\bigg)\Bigg].
\end{eqnarray}
Divide both sides by $L_f$ and note the definition of  $D_f$, the desired result is obtained.
\end{proof}
In the rest of the paper, we denote $\hat{L}=\max_{s\in\b}L_s$.
\begin{cor}\label{corunc}
Under assumptions of Theorem \ref{thm1}. Suppose the random variables $\{i_k\}$ are uniformly distributed ($p_1=p_2=\cdots=1/b$). Let the stepsizes $\{\aaa_k\}$ be selected as
\be
\nono \aaa_k=\f{1}{\sqrt{n+4}}\min\bigg\{\f{\w{D}}{\sigma\sqrt{T}},\f{1}{4\hat{L}(n+4)}\bigg\}, \ k=1,2,...,T,
\ee
$\w{D}$ is some positive parameter. The smoothing parameter $\mu$ is selected such that
\be
\nono\mu\leq\f{D_f}{n+4}\sqrt{\f{1}{T}}.
\ee
Then we have $\mbb E[\|\nabla f(x_R)\|^2]\leq b\cdot L_f\textbf{B}_T$, where
\be
\textbf{B}_T=\f{2\sigma\sqrt{n+4}}{\sqrt{T}}\big(\f{2\hat{L}}{L_f}\w{D}+\f{3D_f^2}{\w{D}}\big)+\f{D_f^2(24\hat{L}+2L_f)(n+4)}{T}.\label{rate1}
\ee
\end{cor}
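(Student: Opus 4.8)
The plan is to specialize the general bound \eqref{uncon} from Theorem \ref{thm1} to the uniform case $p_s = 1/b$, where $\min_{s\in\b}\{p_s\} = \max_{s\in\b}\{p_s\} = 1/b$ and $\max_{s\in\b}\{p_s L_s\} = \hat L/b$, and then bound every term in the resulting expression using the prescribed stepsize and smoothing parameter. First I would observe that since $\aaa_k \le \frac{1}{4\hat L (n+4)\sqrt{n+4}} \le \frac{1}{4\hat L(n+4)}$ we have $2(n+4)\frac{\hat L}{b}\aaa_k \le \frac{1}{2b}$, so each factor $\min_s\{p_s\} - 2(n+4)\max_s\{p_s L_s\}\aaa_k \ge \frac{1}{2b}$, hence the denominator $\sum_{k=1}^T \aaa_k[\cdots] \ge \frac{1}{2b}\sum_{k=1}^T \aaa_k$. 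This is the key inequality that lets me pull the denominator down to something clean.

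Next I would use the explicit stepsize. Since all $\aaa_k$ are equal to the common value, write $\aaa := \frac{1}{\sqrt{n+4}}\min\{\frac{\w D}{\sigma\sqrt T}, \frac{1}{4\hat L(n+4)}\}$, so $\sum_k \aaa_k = T\aaa$ and $\sum_k \aaa_k^2 = T\aaa^2$. Then \eqref{uncon} with the denominator lower bound becomes
\begin{eqnarray}
\nono \f{1}{L_f}\mbb E[\|\nabla f(x_R)\|^2] &\le& \f{2b}{T\aaa}\Bigg[D_f^2 + \f{2\hat L(n+4)\sigma^2}{L_f b}T\aaa^2 \\
\nono && + 2\mu^2(n+4)\Big(1 + L_f(n+4)^2 T\big(\tf{\aaa}{4b} + \tf{\hat L \aaa^2}{b}\big)\Big)\Bigg].
\end{eqnarray}
(Here I would not actually introduce \tf; I would just display it with \frac.) Now I distribute: the first term gives $\frac{2bD_f^2}{T\aaa}$, the second gives $\frac{4\hat L(n+4)\sigma^2\aaa}{L_f}$, and the $\mu^2$ terms I control using $\mu \le \frac{D_f}{n+4}\sqrt{1/T}$, which makes $\mu^2(n+4) \le \frac{D_f^2}{(n+4)T}$, so that the whole last bracket is absorbed into a multiple of $\frac{D_f^2}{T}$ plus lower-order pieces, after using $\aaa \le \frac{1}{4\hat L(n+4)^{3/2}}$ again to bound $(n+4)^2 T(\frac{\aaa}{4b}+\frac{\hat L\aaa^2}{b})$.

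The remaining arithmetic is to handle the $\min$ in $\aaa$ by the standard two-case estimate $\frac{1}{\aaa} \le \sqrt{n+4}(\frac{\sigma\sqrt T}{\w D} + 4\hat L(n+4))$ and $\aaa \le \frac{\w D}{\sigma\sqrt{T(n+4)}}$: the first bound applied to $\frac{2bD_f^2}{T\aaa}$ yields the $\frac{2b D_f^2\sigma\sqrt{n+4}}{\w D\sqrt T}$ piece plus the $\frac{8b\hat L D_f^2(n+4)^{3/2}}{T}$... — at this point I would collect the $1/\sqrt T$ terms into $\frac{2\sigma\sqrt{n+4}}{\sqrt T}(\frac{2\hat L}{L_f}\w D + \frac{3D_f^2}{\w D})$ and the $1/T$ terms into $\frac{D_f^2(24\hat L + 2L_f)(n+4)}{T}$, matching \eqref{rate1}, and multiply through by $L_f$ to get $\mbb E[\|\nabla f(x_R)\|^2] \le b L_f \textbf{B}_T$. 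The main obstacle is purely bookkeeping: making sure the loose constants ($3$ in front of $D_f^2/\w D$, $24\hat L + 2L_f$ in the $1/T$ term) genuinely dominate after combining the contribution of the second term, the two sub-cases of the $\min$, and the smoothing error, so I would be careful to over-estimate consistently (e.g. replacing $n+4$ by $(n+4)^{3/2}$ where convenient) rather than tracking exact coefficients.
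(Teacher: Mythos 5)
Your proposal follows essentially the same route as the paper's proof: you lower-bound the denominator of \eqref{uncon} by $T\aaa_1/(2b)$ using $\aaa_k\le 1/[4\hat L(n+4)]$ in the uniform case, substitute the stepsize and the estimate $\mu^2(n+4)\le D_f^2/[(n+4)T]$, control $1/\aaa_1$ by the sum of the two branches of the $\max$, and collect the $1/\sqrt{T}$ and $1/T$ terms into $\textbf{B}_T$, exactly as the paper does. What remains is only the constant bookkeeping (using $T\ge1$ and $(n+4)T\ge1$ to absorb lower-order pieces into the coefficients $3D_f^2/\w{D}$ and $24\hat L+2L_f$), which the paper carries out explicitly and which your sketch correctly identifies as the last step.
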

\begin{proof}
Note that when $i_k$ are uniformly distributed, and $\{\aaa_k\}$, $\mu$ are selected as above, it follows that,
\be
\sum_{k=1}^T\aaa_k[\min_{s\in\b}\{p_s\}-2(n+4)\max_{s\in\b}\{p_sL_s\}\aaa_k]\geq \f{T\aaa_1}{2b}.
\ee
Also note that $b\geq1$, after substituting above $\aaa_k$ and $\mu$ into \eqref{uncon}, it follows that
\bea
\nono&&\f{1}{L_f}\mbb E[\|\nabla f(x_R)\|^2]\\
\nono&&\leq\f{2bD_f^2+4b\mu^2(n+4)}{T\aaa_1}+\mu^2L_f(n+4)^3+4\hat{L}(n+4)\mu^2L_f\aaa_1+\f{4\hat{L}(n+4)\sigma^2}{L_f}\aaa_1  \\
\nono&&\leq\bigg(\f{2bD_f^2}{T}+\f{4bD_f^2}{(n+4)T^2}\bigg)\max\bigg\{\f{\sigma\sqrt{(n+4)T}}{\w{D}},4\hat{L}(n+4)\bigg\}+\f{bD_f^2L_f(n+5)}{T}+\f{4\hat{L}\sqrt{n+4}\sigma\w{D}}{L_f\sqrt{T}}    \\
\nono&&\leq\bigg(\f{2bD_f^2}{T}+\f{4bD_f^2}{(n+4)T^2}\bigg)\bigg[\f{\sigma\sqrt{(n+4)T}}{\w{D}}+4\hat{L}(n+4)\bigg]+\f{bD_f^2L_f(n+5)}{T}+\f{4\hat{L}\sqrt{n+4}\sigma\w{D}}{L_f\sqrt{T}}\\
\nono&&=\f{2bD_f^2\sigma\sqrt{n+4}}{\w{D}\sqrt{T}}+\f{4bD_f^2\sigma\sqrt{n+4}}{\w{D}(n+4)T\cdot\sqrt{T}}+\f{8bD_f^2\hat{L}(n+4)}{T}+\f{16bD_f^2\hat{L}}{T^2}+\f{D_f^2L_f(n+5)}{T}+\f{4\hat{L}\sqrt{n+4}\sigma\w{D}}{L_f\sqrt{T}}\\
\nono&&=\f{2b\sigma\sqrt{n+4}}{\sqrt{T}}\bigg(\f{D_f^2}{\w{D}}+\f{2D_f^2}{\w{D}(n+4)T}+\f{2\hat{L}\w{D}}{L_f}\bigg)+\f{bD_f^2(n+4)}{T}\bigg(8\hat{L}+\f{16\hat{L}}{T}+2L_f\bigg),
\eea
which gives \eqref{rate1} after noting that $(n+4)T\geq1$ and $T\geq1$.
\end{proof}
When the parameter $\w{D}$ in \eqref{rate1} is chosen as an optimal value $(3L_f/2\hat{L})^{\f{1}{2}}D_f$, an improved bound
\be
O\big(\f{\sigma D_f\sqrt{n\hat{L}/L_f}}{\sqrt{T}}+\f{nD_f^2(\hat{L}+L_f)}{T}\big) \label{convbdd1}
\ee
can be obtained for $\textbf{B}_T$. The $T$-rate for ZS-BCD in \eqref{convbdd1} matches the optimal rate for nonconvex smooth NSO problem. Corollary \ref{corunc} and Markov inequality imply that
\be
Prob\big\{\|\nabla f(x_R)\|^2\geq\lambda bL_f\textbf{B}_T\big\}\leq\f{1}{\lambda},\ \forall\lambda\geq0. \label{markov}
\ee
Also, \eqref{markov} can provide the complexity for computing an $(\eee,\Lambda)$-solution of problem \eqref{pro1} in a single run of ZS-BCD, revealing the large-deviation property for ZS-BCD. For any $\eee>0$ and $\Lambda\in(0,1)$, by setting $\lambda=1/\Lambda$ and
\be
\nono T=\bigg\lceil\max\bigg\{\f{2bL_fD_f^2(24\hat{L}+2L_f)(n+4)}{\Lambda\eee},\f{16b^2(n+4)\sigma^2L_f^2}{\Lambda^2\eee^2}\bigg(\f{2\hat{L}}{L_f}\w{D}+\f{3D_f^2}{\w{D}}\bigg)^2\bigg\}\bigg\rceil,
\ee
the complexity for finding an $(\eee,\Lambda)$-solution in ZS-BCD method, after disregarding several constant factors, can be bounded by
\be
\o\bigg\{\f{b^2n\sigma^2L_f^2}{\Lambda^2\eee^2}\bigg(\f{\hat{L}}{L_f}\w{D}+\f{D_f^2}{\w{D}}\bigg)^2+\f{bnL_f(\hat{L}+L_f)D_f^2}{\Lambda\eee}\bigg\}. \label{comb1}
\ee
The above representation highlights the dependency of the complexity bound of ZS-BCD on the block index $b$ and block Lipschitz characteristic parameter $\hat{L}$. According to the selection rule of $\{\aaa_k\}$ and $P_R$ in ZS-BCD, the influence of the estimation of $L_s$, $s=1,2,...,b$ on $\{\aaa_k\}$ and $P_R$, thus on complexity bound, seems inevitable. Meanwhile, due to the random block projection structure of the proposed algorithm, first and second order indices $b$, $b^2$ appearing in the representation \eqref{comb1} also affect the complexity. Hence, it would be a possible future research topic to reduce the effect of $b$, $\hat{L}$ on complexity of block coordinate type method. However, to reduce the effect of index $b$, we may need to improve the algorithm structure or develop other types of block coordinate type methods. To reduce effect of $L_s$, $s=1,2,..,b$, there seem to be more technical difficulties to overcome the classical handling on $C_{L_f}^{1,1}$ condition of $f$, which results in the dependency of $\{\aaa_k\}$ and $P_R$ on $\hat{L}$.

\subsection{Convex objective function}
We end this session with a convex result. Define the weighted summation $\mathcal{N}_k^2$ as follow:
\begin{equation}
\mathcal{N}_k^2=\sum_{s=1}^b\f{1}{p_s}\|x_k^{(s)}-x_{*}^{(s)}\|_s^2, \ k\geq1.   \label{pai}
\end{equation}

\begin{thm}\label{convexc}
Let $\mathcal{N}_k^2$ be defined as \eqref{pai}, denote $\mathcal{D}_{p,X}^2=\mathcal{N}_1^2$. Suppose the objective in problem \eqref{pro3} is convex with an optimal point $x_{*}$. If the stepsizes $\{\aaa_k\}$ and the probability mass function $P_R(\cdot)$ are chosen such that $\aaa_k\leq1/[2(n+5)L_f]$ and
\begin{equation}
\nono P_R(k):=Prob\{R=k\}=\f{\aaa_k-4(n+5)L_f\aaa_k^2}{\sum_{k=1}^T[\aaa_k-4(n+5)L_f\aaa_k^2]}, \ k=1,2,...,T.
\end{equation}
then for $T\geq 1$, we have
\begin{eqnarray}
\nono\mathbb E[f(x_R)-f^{*}]&\leq&\f{1}{2\sum_{k=1}^T[\aaa_k-4(n+5)L_f\aaa_k^2]}\Bigg[\mathcal{D}_{p,X}^2+2\mu^2L_f(n+5)\sum_{k=1}^T\aaa_k\\
\nono&&+8(n+5)\big[\mu^2L_f^2(n+5)^3+\mu^2L_f^2(n+5)+\sigma^2\big]\sum_{k=1}^T\aaa_k^2\Bigg]
\end{eqnarray}
in which the expectation is taken with respect to $R$, $\xi_{[T]}$, $u_{[T]}$.
\end{thm}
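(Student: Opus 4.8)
The plan is to mirror the argument used in the proof of Theorem \ref{thm1}, but now exploiting convexity of $f$ to control $f(x_k)-f^{*}$ directly rather than telescoping $f_\mu$ along the trajectory. First I would start from the block descent inequality for $f_\mu$ (which inherits $C_{L_f}^{1,1}$ and the block Lipschitz constants $L_s$ by Lemma \ref{Nes Lem}(a) and Assumption \ref{ass2}); applied to the $i_k$th block and using the update rule this reads
\begin{equation}
\nono f_{\mu}(x_{k+1})\leq f_{\mu}(x_k)-\aaa_k\nn\nabla_{i_k}f_{\mu}(x_k),G_{\mu}^{(i_k)}(x_k,\tau_k)\mm+\f{L_{i_k}}{2}\aaa_k^2\|G_{\mu}^{(i_k)}(x_k,\tau_k)\|_{i_k}^2.
\end{equation}
Next, instead of summing this, I would track the weighted distance $\mathcal{N}_k^2$. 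Expanding $\|x_{k+1}^{(i_k)}-x_*^{(i_k)}\|_{i_k}^2=\|x_k^{(i_k)}-x_*^{(i_k)}\|_{i_k}^2-2\aaa_k\nn x_k^{(i_k)}-x_*^{(i_k)},G_{\mu}^{(i_k)}(x_k,\tau_k)\mm+\aaa_k^2\|G_{\mu}^{(i_k)}(x_k,\tau_k)\|_{i_k}^2$, multiplying by $1/p_{i_k}$ and taking the conditional expectation over $i_k$, the factor $1/p_{i_k}$ cancels the probability weight so that $\mathbb E_{i_k}[\tfrac{1}{p_{i_k}}(\cdots)]$ turns the single-block inner product into the full inner product $\nn x_k-x_*,G_{\mu}(x_k,\tau_k)\mm$; then taking expectation over $\tau_k$ and using $\mathbb E_{\tau_k}[G_{\mu}(x_k,\tau_k)]=\nabla f_\mu(x_k)$ produces $-2\aaa_k\nn x_k-x_*,\nabla f_\mu(x_k)\mm$.

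The key step is then to bound $\nn x_k-x_*,\nabla f_\mu(x_k)\mm$ below using convexity: since $f_\mu$ is convex (a Gaussian average of the convex $f$), $\nn x_k-x_*,\nabla f_\mu(x_k)\mm\geq f_\mu(x_k)-f_\mu(x_*)\geq f(x_k)-f(x_*)-\mu^2L_fn$, invoking Lemma \ref{Nes Lem}(b) on both endpoints (I would absorb the $\mu^2 L_f n$ terms into the $\mu^2 L_f(n+5)$ term, slightly loosening constants). For the second-moment term I would use Lemma \ref{Nes Lem}(c) together with Assumption \ref{ass1} exactly as in Theorem \ref{thm1}: $\mathbb E_{i_k,\tau_k}[\tfrac{L_{i_k}}{p_{i_k}}\cdot p_{i_k}\|G_{\mu}^{(i_k)}\|_{i_k}^2]$—wait, one must be careful here: the weight is $1/p_{i_k}$ from $\mathcal{N}_k^2$ but there is no $L_{i_k}$ on the distance-expansion side; the $L_{i_k}$ appears only via the descent inequality, which I am \emph{not} using here. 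So on the $\mathcal{N}$ side the second moment term is just $\aaa_k^2\mathbb E[\tfrac{1}{p_{i_k}}\|G_\mu^{(i_k)}\|_{i_k}^2]=\aaa_k^2\mathbb E[\|G_\mu(x_k,\tau_k)\|^2]$, which by Lemma \ref{Nes Lem}(c), \eqref{t1} and Assumption \ref{ass1} is bounded by something like $2(n+4)\|\nabla f(x_k)\|^2+2(n+4)\sigma^2+\tfrac{\mu^2}{2}L_f^2(n+6)^3$. To close the recursion I would then reinsert the gradient norm via $\|\nabla f(x_k)\|^2\leq 2L_f(f(x_k)-f^*)+\tfrac{\mu^2}{2}L_f^2(n+3)^2$-type bounds (convex $C_{L_f}^{1,1}$ inequality plus \eqref{t2}), so that $\|\nabla f(x_k)\|^2$ is controlled by $f(x_k)-f^*$; this is what generates the $4(n+5)L_f\aaa_k^2$ coefficient that gets subtracted from $\aaa_k$.

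Putting it together: the conditional expectation inequality becomes, schematically,
\begin{equation}
\nono \mathbb E[\mathcal{N}_{k+1}^2]\leq \mathbb E[\mathcal{N}_k^2]-2\aaa_k\mathbb E[f(x_k)-f^*]+C(n+5)L_f\aaa_k^2\,\mathbb E[f(x_k)-f^*]+(\text{noise}+\mu\text{-terms}),
\end{equation}
i.e. $2[\aaa_k-C'(n+5)L_f\aaa_k^2]\mathbb E[f(x_k)-f^*]\leq \mathbb E[\mathcal{N}_k^2]-\mathbb E[\mathcal{N}_{k+1}^2]+(\cdots)$. Summing over $k=1,\dots,T$, telescoping $\mathcal N_k^2$ (with $\mathcal N_1^2=\mathcal D_{p,X}^2$ and $\mathcal N_{T+1}^2\geq0$), collecting the $\sum\aaa_k$-weighted $\mu$-terms and the $\sum\aaa_k^2$-weighted noise/$\mu$ terms, and finally dividing by $\sum_{k=1}^T[\aaa_k-4(n+5)L_f\aaa_k^2]$ (this is why the stepsize restriction $\aaa_k\leq 1/[2(n+5)L_f]$ is imposed, to keep each coefficient nonnegative) together with the definition of $P_R$ so that $\mathbb E_R[f(x_R)-f^*]=\sum_k P_R(k)\mathbb E[f(x_k)-f^*]$, yields the stated bound. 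The main obstacle I anticipate is bookkeeping the constants when consolidating the several $\mu$-dependent error terms (those of order $\mu^2L_f(n+5)$, $\mu^2L_f^2(n+5)^3$, $\mu^2L_f^2(n+5)$) into the clean form in the statement, and making sure the $1/p_{i_k}$ weighting in $\mathcal N_k^2$ is handled consistently on every term—in particular verifying that the cross term and the second-moment term behave correctly under $\mathbb E_{i_k}[\tfrac{1}{p_{i_k}}\,\cdot\,]$ so that all $p_s$-dependence disappears, which is the whole point of the weighting \eqref{pai}.
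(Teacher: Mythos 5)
Your proposal is correct and follows essentially the same route as the paper: expand the weighted distance $\mathcal{N}_k^2$ along the update, use the $1/p_{i_k}$ weighting so that conditional expectation over $i_k$ restores the full inner product and the full second moment $\|G_{\mu}(x_k,\tau_k)\|^2$, kill the cross term by unbiasedness, bound the second moment via Lemma \ref{Nes Lem}(c), convert $\nn\nabla f_{\mu}(x_k),x_k-x_{*}\mm$ to $f(x_k)-f^{*}-\mu^2L_fn$ by convexity of $f_{\mu}$ and Lemma \ref{Nes Lem}(b), telescope, and invoke the definition of $P_R$. The only substantive difference is how you close the recursion: you propose the standard $C^{1,1}_{L_f}$ minimizer inequality $\|\nabla f(x_k)\|^2\leq 2L_f\big(f(x_k)-f^{*}\big)$ to fold the $\aaa_k^2\|\nabla f(x_k)\|^2$ term directly into $f(x_k)-f^{*}$, whereas the paper instead bounds $\|\nabla f(x_k)\|^2\leq 4L_f\nn\nabla f_{\mu}(x_k),x_k-x_{*}\mm+\f{5\mu^2}{2}L_f^2(n+3)^3$ using \eqref{t2}, co-coercivity of the convex $C^{1,1}_{L_f}$ function $f_{\mu}$, and Lemma \ref{Nes Lem} at $x_{*}$, and only afterwards passes to $f(x_k)-f^{*}$. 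Both variants are valid; yours even yields the slightly better coefficient $\aaa_k-2(n+4)L_f\aaa_k^2$, which can be loosened to the stated $\aaa_k-4(n+5)L_f\aaa_k^2$, so after the constant bookkeeping you flag, the claimed bound follows.
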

\begin{proof}
The ZS-BCD algorithm implies that, when $s=i_k$,
\begin{eqnarray}
\nono&&\|x_{k+1}^{(i_k)}-x_{*}^{(i_k)}\|_{i_k}^2=\|x_{k}^{(i_k)}-\aaa_kG_{\mu}^{(i_k)}(x_k,\tau_k)-x_{*}^{(i_k)}\|_{i_k}^2\\
\nono&&=\|x_{k}^{(i_k)}-x_{*}^{(i_k)}\|_{i_k}^2-2\aaa_k\nn G_{\mu}^{(i_k)}(x_k,\tau_k),x_{k}^{(i_k)}-x_{*}^{(i_k)}\mm+\|G_{\mu}^{(i_k)}(x_k,\tau_k)\|_{i_k}^2.
\end{eqnarray}
Then it follows
\begin{eqnarray}
\nono&&\mathcal{N}_{k+1}^2=\sum_{s=1}^b\f{1}{p_s}\|x_{k+1}^{(s)}-x_{*}^{(s)}\|_{s}^2=\sum_{s\neq i_k}\f{1}{p_s}\|x_{k}^{(s)}-x_{*}^{(s)}\|_{s}^2+\f{1}{p_{i_k}}\|x_{k+1}^{(i_k)}-x_{*}^{(i_k)}\|_{i_k}^2\\
\nono&&\ \ \ \ \ \ \  = \sum_{s\neq i_k}\f{1}{p_s}\|x_{k}^{(s)}-x_{*}^{(s)}\|_{s}^2+\f{1}{p_{i_k}}\bigg[\|x_{k}^{(i_k)}-x_{*}^{(i_k)}\|_{i_k}^2\\
\nono&&\ \ \ \ \ \ \ \ \ \ \ \ -2\aaa_k\nn G_{\mu}^{(i_k)}(x_k,\tau_k),x_{k}^{(i_k)}-x_{*}^{(i_k)}\mm+\|G_{\mu}^{(i_k)}(x_k,\tau_k)\|_{i_k}^2\bigg]\\
\nono&&\ \ \ \ \ \ \  =\mathcal{N}_k^2-2\aaa_k\nn\f{1}{p_{i_k}}U_{i_k}G_{\mu}^{(i_k)}(x_k,\tau_k)-\nabla f_{\mu}(x_k),x_k-x_{*}\mm\\
\nono&&\ \ \ \ \ \ \ \ \ \ \ \ -2\aaa_k\nn\nabla f_{\mu}(x_k),x_k-x_{*}\mm+\aaa_k^2\f{1}{p_{i_k}}\|G_{\mu}^{(i_k)}(x_k,\tau_k)\|_{i_k}^2  \label{pai2}
\end{eqnarray}
By taking summation from $k=1$ to $k=T$ on both sides of the above equality, we obtain
\begin{eqnarray}\label{2m}
\begin{split}
&&\mathcal{N}_{T+1}^2=\mathcal{N}_1^2-2\sum_{k=1}^T\aaa_k\nn\f{1}{p_{i_k}}U_{i_k}G_{\mu}^{(i_k)}(x_k,\tau_k)-\nabla f_{\mu}(x_k),x_k-x_{*}\mm \\
&&\ \ \ \ \ \ \ \ \ \ \ -2\sum_{k=1}^T\aaa_k\nn\nabla f_{\mu}(x_k),x_k-x_{*}\mm+\sum_{k=1}^T\aaa_k^2\f{1}{p_{i_k}}\|G_{\mu}^{(i_k)}(x_k,\tau_k)\|_{i_k}^2.
\end{split}
\end{eqnarray}
Observe that
\begin{eqnarray}
\nono &&\mathbb E_{i_{[k-1]},\tau_{[k-1]}}\big[\nn p_{i_k}^{-1}U_{i_k}G_{\mu}^{(i_k)}(x_k,\tau_k)-\nabla f_{\mu}(x_k),x_k-x_{*}\mm\big] \\
\nono&&=\sum_{s=1}^b\mathbb E_{\tau_{[k-1]}}\big[\nn U_{s}G_{\mu}^{(s)}(x_k,\tau_k)-\nabla f_{\mu}(x_k),x_k-x_{*}\mm\big]=0,
\end{eqnarray}
which follows from Lemma \ref{Nes Lem} (a), and
\begin{eqnarray}
\nono &&\mathbb E_{i_{[k-1]},\tau_{[k-1]}}\big[p_{i_k}^{-1}\|G_{\mu}^{(i_k)}(x_k,\tau_k)\|_{i_k}^2\big]=\sum_{s=1}^bp_{s}\cdot p_s^{-1}\mathbb E_{\tau_{[k-1]}}\big[\|G_{\mu}^{(s)}(x_k,\tau_k)\|_{s}^2\big] \\
\nono&&=\mathbb E_{\tau_{[k-1]}}\big[\|G_{\mu}(x_k,\tau_k)\|^2\big]\leq2(n+4)\mathbb E_{\tau_{[k-1]}}[\|\nabla f(x_k)\|^2]+2(n+4)\sigma^2+\f{\mu^2}{2}L_f^2(n+6)^3.
\end{eqnarray}
Take total expectation with respect to $i_{[T]}$ and $\tau_{[T]}$ on both sides of \eqref{2m}, use the notation $\mathcal{D}_{p,X}^2=\mathcal{N}_1^2$, it follows that
\begin{eqnarray}
\nono&&\mathbb E_{i_{[T]},\tau_{[T]}}[\mathcal{N}_{T+1}^2]\leq \mathcal{D}_{p,X}^2-2\sum_{k=1}^T\aaa_k\mathbb E_{i_{[T]},\tau_{[T]}}[\nn\nabla f_{\mu}(x_k),x_k-x_{*}\mm]+2(n+4)\sum_{k=1}^T\aaa_k^2\mathbb E_{i_{[T]},\tau_{[T]}}[\|\nabla f(x_k)\|^2]\\
\nono&&\ \ \ \ \ \ \ \ \ \ \ \ \ \ \ \ \ \  \ \ \ \ \  \ +\bigg[2(n+4)\sigma^2+\f{\mu^2}{2}L_f(n+6)^3\bigg]\sum_{k=1}^T\aaa_k^2. \label{dai1}
\end{eqnarray}
Noting that
\bea
\nono&&\|\nabla f(x_k)\|^2\leq2\|\nabla f_{\mu}(x_k)\|^2+\f{\mu^2}{2}L_f^2(n+3)^3\\
\nono&&\ \ \ \ \ \ \ \ \ \ \ \ \ \ \ \leq4\|\nabla f_{\mu}(x_k)-\nabla f_{\mu}(x_{*})\|^2+4\|\nabla f_{\mu}(x_{*})\|^2+\f{\mu^2}{2}L_f^2(n+3)^3 \\
\nono&&\ \ \ \ \ \ \ \ \ \ \ \ \ \ \ \leq4L_f\nn\nabla f_{\mu}(x_k),x_k-x_{*}\mm+\f{5\mu^2}{2}L_f^2(n+3)^3, 
\eea
in which the first inequality follows from \eqref{t2}, and the third inequality follows from the convexity and $L_f$-Lipschitz property of $f_{\mu}$ and Lemma \ref{Nes Lem} with $x=x_*$. Combining the above two inequalities, we obtain
\bea
\nono &&\mbb E_{i_{[T]},\tau_{[T]}}[\mathcal{N}_{T+1}^2]\leq\mathcal{D}_{p,X}^2-2\sum_{k=1}^T\big(\aaa_k-4(n+4)L_f\aaa_k^2\big)\mbb E_{i_{[T]},\tau_{[T]}}[\nn\nabla f_{\mu}(x_k),x_k-x_{*}\mm] \\
\nono &&\ \ \ \ \ \ \ \ \ \ \ \ \ +\bigg[5\mu^2L_f^2(n+4)(n+3)^3+2(n+4)\sigma^2+\f{\mu^2}{2}L_f^2(n+6)^3\bigg]\sum_{k=1}^T\aaa_k^2  \\
\nono&&\ \ \ \ \ \ \ \ \ \ \   \leq \mathcal{D}_{p,X}^2-2\sum_{k=1}^T\big(\aaa_k-4(n+4)L_f\aaa_k^2\big)\mbb E_{i_{[T]},\tau_{[T]}}[f_{\mu}(x_k)-f_{\mu}(x_{*})]  \\
\nono&&\ \ \ \ \ \ \ \ \ \ \ \ \ +8(n+4)\big[\mu^2L_f^2(n+4)^2(n+5)+\sigma^2)\big]\sum_{k=1}^T\aaa_k^2 \\
\nono&&\ \ \ \ \ \ \ \ \ \ \  \leq\mathcal{D}_{p,X}^2-2\sum_{k=1}^T\big(\aaa_k-4(n+4)L_f\aaa_k^2\big)\mbb E_{i_{[T]},\tau_{[T]}}[f(x_k)-f^{*}-\mu^2L_f n] \\
\nono&&\ \ \ \ \ \ \ \ \ \ \ \ \ +8(n+5)\big[\mu^2L_f^2(n+5)^3+\sigma^2\big]\sum_{k=1}^T\aaa_k^2.
\eea
Rearranging terms and simplifying the coefficients, we obtain
\bea
\nono &&2\sum_{k=1}^T[\aaa_k-4(n+5)L_f\aaa_k^2]\mbb E_{i_{[T]},\tau_{[T]}}[f(x_k)-f^{*}]\leq\mathcal{D}_{p,X}^2+2\mu^2L_f(n+5)\sum_{k=1}^T\aaa_k \\
\nono&&\ \ \ \ \ \ \ \ \ \ \ \ +8(n+5)\big[\mu^2L_f^2(n+5)^3+\mu^2L_f^2(n+5)+\sigma^2\big]\sum_{k=1}^T\aaa_k^2.
\eea
Then the desired result is obtained by noting the definition of $P_R(\cdot)$.
\end{proof}
In Theorem \ref{convexc}, if the stepsizes $\{\aaa_k\}$ are taken as $\aaa_k=\f{1}{\sqrt{n+5}}\min\big\{\f{\w{D}}{\sigma\sqrt{T}},\f{1}{8L_f(n+5)}\big\}$, $k=1,2,...,T$,
the smoothing parameter $\mu$ are selected such that $\mu\leq\mathcal{D}_{p,X}/\sqrt{n+5}$. After selecting an optimal value of parameter $\w{D}$, an
\be
O\big(\f{\sigma\mathcal{D}_{p,X}\sqrt{n}}{\sqrt{T}}+\f{n\mathcal{D}_{p,X}^2L_f}{T}\big) \label{convbdd2}
\ee
bound can be obtained for $\mbb E[f(x_R)-f^{*}]$. The proving details are similar with the procedures in Corollary \ref{corunc}, we omit them for saving space. In contrast to the bound \eqref{convbdd1} for nonconvex case, the disappearance of $\hat{L}$ shows that the convex case removes the needs of making $L_s$-estimation, $s=1,2,...,b$ to achieve the rate bound. In addition, weighted summation parameter $\mathcal{D}_{p,X}$ highlights the importance of block coordinate projection in convex case. It deserves to be explored that, if the rate of \eqref{convbdd2} can be accelerated to an optimal $T$-rate of $O\big(\f{\sigma\mathcal{D}_{p,X}\sqrt{n}}{\sqrt{T}}+\f{n\mathcal{D}_{p,X}^2L_f}{T^2}\big)$ by incorporating with accelerated gradient technique \cite{random scheme1},\cite{acc1}?
\section{Constrained composite optimization: ZS-BMD}
\subsection{ZS-BMD method}
In this section, a general problem setting is considered, we proposed to develop ZS-BMD method to solve the constrained nonconvex composite optimization problem \eqref{pro2}. The constrained decision domain $X$ has the block structure $X=X_1\times X_2\times\cdots\times X_b$, in which $X_s\in \mathbb R^{n_s}$, $s=1, 2,..., b$, are closed convex sets and $n_1+n_2+\cdots+n_b=n$. In the rest of the paper, function $\chi$ is assumed to be block separable, which means that $\chi(\cdot)$ can be written into the form $\chi(x)=\sum_{s=1}^b\chi_s(x^{(s)})$ for any $x\in X$, in which $\chi_s:\mbb R^{n_s}\rightarrow\mbb R$, $s=1,2,...,b$ are closed and convex. We introduce several standard notations for later use. For $x,g\in\mbb R^n$, $\aaa>0$, define $P(x,g,\aaa)=(P_1(x^{(1)},g^{(1)},\aaa),P_2,(x^{(2)},g^{(2)},\aaa),...,P_b(x^{(b)},g^{(b)},\aaa)$ in which
\bea
P_s(x^{(s)},g^{(s)},\aaa)=\arg\min_{y\in X_s}\big\{\nn g^{(s)},y\mm+\f{1}{\aaa}D_{\phi_s}(y,x^{(s)})+\chi_s(y)\big\}, \ s=1,2,...,b. \label{P}
\eea
denote $\p(x,g,\aaa)=\big(\p_1(x,g,\aaa),\p_2(x,g,\aaa),...,\p_s(x,g,\aaa)\big)$, in which the generalized block gradient mapping is defined as
\be
\p_s(x,g,\aaa)=\f{1}{\aaa}\big[U_s^Tx-P_s(U_s^Tx,U_s^Tg,\aaa)\big], \ s=1,2,...,b. \label{PP}
\ee
$D_{\phi_s}(x,y)=\phi_s(x)-\phi_s(y)-\nn\nabla\phi_s(y),x-y\mm$ is the Bregman divergence with distance generating function $\phi_s$. Without loss of generality, in what follows we assume $\phi_s$, $s=1,2,...,b$ to be $1$-strongly convex functions. We mention that, when $X=\mbb R^n$ with $X_s=\mbb R^{n_s}$, $s=1,2,...,b$, $g=\nabla f(x)$, $\chi_s=0$, and $\phi_s(x)=\f{1}{2}\|x\|_s^2$, the generalized block gradient degenerates to classical block gradient $\nabla_s f(x)$ in last section, and the results in this section can be considered as extensions of the last section in many aspects. To get the main result, we need several basic lemmas of $P(x,g,\aaa)$ and $\p(x,g,\aaa)$, which are as results of the optimality condition of block projection operator $P_s$. In the rest of the paper, we use $\b$ to denote the index set $\{1,2,...,b\}$. For an arbitrary given $\eee>0$, call $\bar{x}$ the $\eee$-stationary point of the composite problem \eqref{pro2} if $\mbb E[\|\p(\bar{x},\nabla f(\bar{x}), \aaa)\|^2]\leq\eee$ for some $\aaa>0$. Also denote $\Phi_{\mu}(x)=f_{\mu}(x)+\chi(x)$. We make an additional assumption as follow on gradient of function $f$ in the rest of the paper.
\begin{ass}\label{ass3}
There exists a constant such that $\|\nabla f(x)\|\leq M$ for all $x\in X$.
\end{ass}

\blm\label{lemma1}
Define $P(x,g,\aaa)$ and $\p(x,g,\aaa)$ as \eqref{P} and $\eqref{PP}$, denote $x^{+}=P(x,g,\aaa)$, then
\be
\nono\nn g^{(s)},\p_s(x,U_sg^{(s)},\aaa)\mm\geq\|\p_s(x,U_sg^{(s)},\aaa)\|_s^2+\f{1}{\aaa}[\chi_s(U_s^Tx^{+})-\chi_s(U_s^Tx)], s=1,2,...,b.
\ee
\elm
\begin{proof}
See Appendix.
\end{proof}
\blm\label{lemmap}
Let $P(x,g,\aaa)$ with its components be defined as in \eqref{P}, then for any $g_1,g_2\in \mbb R^n$, we have
\be
\nono\|P_s(U_s^Tx,U_s^Tg_1,\aaa)-P_s(U_s^Tx,U_s^Tg_2,\aaa)\|_s\leq\aaa\|U_s^Tg_1-U_s^Tg_2\|_s, s=1,2,...,b. \label{lemp}
\ee
\elm
\begin{proof}
See Appendix.
\end{proof}
\blm\label{lemma2}
Let $P(x,g,\aaa)$ and $\p(x,g,\aaa)$ be defined as \eqref{P} and $\eqref{PP}$, for any $g_1,g_2\in \mbb R^n$ and any $s\in\b$, the following block non-expansion property for $\p$ holds,
\be
\nono \|\p_s(x,g_1,\aaa)-\p_s(x,g_2,\aaa)\|_s\leq\|U_s^Tg_1-U_s^Tg_2\|_s, s=1,2...,b.
\ee
\elm
\begin{proof}
See Appendix.
\end{proof}
In the rest of this paper, we make use of a popular averaging technique for zeroth-order stochastic gradient estimator (see e.g.\cite{r1}). In each step, an averaged estimator of the batch samples is used instead of an individual estimator. This procedure results in the variance reduction and the increasing precision, that is convenient for stochastic optimization circumstances.
\begin{framed}
\noindent\textbf{ZS-BMD method}:

\noi\textbf{Input}: Initial point $x_1\in X_1\times X_2\times\cdots\times X_b$, total iteration $T$, the stepsizes $\aaa_k>0$, $k\geq1$, batch sizes $T_k$ with $T_k>0$, $k\geq1$, probability mass function $P_R(\cdot)$ supported on $\{1,2,...,T\}$, probabilities $p_s\in [0,1], s=1,2,...,b$, s.t. $\sum_{s=1}^bp_s=1$.

\noi\textbf{Step} 0: Generate a random variable $i_k$ according to
\begin{equation}
\nono Prob\{i_k=s\}=p_s, s=1,2,...,b.
\end{equation}
and let $R$ be a random variable with probability mass function $P_R$.

\noi\textbf{Step} k=1,2,...,R-1: Generate $u_k=[u_{k,1},...,u_{k,T_k}]$, in which $u_{k,j}\sim N(0,I_n)$ and call the stochastic oracle to compute the $i_k$th block average stochastic gradient $G_{\mu,k}^{(i_k)}$ by
\begin{equation}
G_{\mu,k}^{(i_k)}=\f{1}{T_k}\sum_{t=1}^{T_k}U_{i_k}^TG_{\mu}(x_k,\xi_{k,t},u_{k,t}). \label{minig}
\end{equation}

\noi \underline{Update $x_{k}$} by:
\begin{eqnarray}
x_{k+1}^{(s)}&=&\left\{
  \begin{array}{ll}
    P_{i_k}(x_k^{(i_k)},G_{\mu,k}^{(i_k)},\aaa_k), & \hbox{$s=i_k$;} \\
    x_{k}^{(s)}, & \hbox{$s\neq i_k$.}     \label{BMD}
  \end{array}
\right.
\end{eqnarray}

\noi\textbf{Output} $x_R$.
\end{framed}

\begin{thm}\label{general main}
Under Assumptions \ref{ass1}, \ref{ass2}, \ref{ass3}. Suppose the stepsizes $\{\aaa_k\}$, $k\geq1$ in ZS-BMD satisfy $\aaa_k\leq2/L_s$, $s\in\b$. The probability mass function $P_R$ is chosen such that
\be
P_R(k):=Prob\big\{R=k\big\}=\f{\aaa_k\min_{s\in\b}\{ p_s(1-\f{L_s}{2}\aaa_k)\}}{\sum_{k=1}^T\aaa_k\min_{s\in\b} \{p_s(1-\f{L_s}{2}\aaa_k)\}}, \ k=1,2,...,T. \label{generalp}
\ee
Then we have
\be
\mbb E\big[\|\p(x_R,\nabla f(x_R),\aaa_R)\|^2\big]\leq\f{4[\Phi_{\mu}(x_1)-\Phi_{\mu}^{*}]+8(\max_{s\in\b}p_s)\w{\sigma}^2\sum_{k=1}^T\f{\aaa_k}{T_k}}{\sum_{k=1}^T\aaa_k\min_{s\in\b} p_s(1-\f{L_s}{2}\aaa_k)}+\f{\mu^2}{2}L_f^2(n+3)^3,  \label{generalin}
\ee
in which $\w{\sigma}^2=4(n+4)[2M^2+\sigma^2+\mu^2L_f^2(n+4)^2]$, the total expectation is taken with respect to $R$, $i_{[T]}$, $\xi_{[T]}$ and $u_{[T]}$.
\end{thm}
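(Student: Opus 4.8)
The plan is to mimic the descent argument of Theorem~\ref{thm1}, but now applied to the composite smoothed objective $\Phi_\mu=f_\mu+\chi$ and using the block projection update \eqref{BMD} in place of the plain gradient step. First I would fix an iteration $k$ and, writing $i_k$ for the active block, use the fact that $\nabla f_\mu$ has the block Lipschitz constant $L_s$ (this follows from Assumption~\ref{ass2} and Lemma~\ref{Nes Lem}(a), exactly as in the proof of Theorem~\ref{thm1}) together with the convexity of $\chi_{i_k}$ to derive a one-step inequality of the form
\[
\Phi_\mu(x_{k+1}) \le \Phi_\mu(x_k) - \aaa_k\Big(1-\tfrac{L_{i_k}}{2}\aaa_k\Big)\|\p_{i_k}(x_k,G_{\mu,k}^{(i_k)},\aaa_k)\|_{i_k}^2 + \text{(error terms)},
\]
where the error terms measure the gap between the stochastic block gradient $G_{\mu,k}^{(i_k)}$ and the true $\nabla_{i_k}f_\mu(x_k)$. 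The key tools here are Lemma~\ref{lemma1} (which converts the inner product $\langle\nabla_{i_k}f_\mu,\p_{i_k}\rangle$ into a quadratic-plus-regularizer lower bound) and Lemma~\ref{lemma2} (the block non-expansion of $\p$, used to replace $\p_{i_k}(x_k,G_{\mu,k}^{(i_k)},\aaa_k)$ by $\p_{i_k}(x_k,\nabla_{i_k}f_\mu(x_k),\aaa_k)$ up to a controlled error $\|G_{\mu,k}^{(i_k)}-\nabla_{i_k}f_\mu(x_k)\|_{i_k}$).

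Next I would rearrange, sum from $k=1$ to $T$, and use $\Phi_\mu(x_{T+1})\ge\Phi_\mu^*$ to telescope the left-hand side, obtaining $\sum_k\aaa_k(1-\tfrac{L_{i_k}}{2}\aaa_k)\|\p_{i_k}(x_k,\nabla_{i_k}f_\mu(x_k),\aaa_k)\|_{i_k}^2$ bounded by $\Phi_\mu(x_1)-\Phi_\mu^*$ plus accumulated error terms. Then I take total expectation over $i_{[T]}$, $\xi_{[T]}$, $u_{[T]}$. For the cross terms I condition on the history $\tau_{[k-1]}$ and use that the averaged estimator $G_{\mu,k}^{(i_k)}$ is unbiased for $\nabla_{i_k}f_\mu(x_k)$ (Lemma~\ref{Nes Lem}(a)), so their expectation vanishes just as in Theorem~\ref{thm1}. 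For the squared-error terms I need a bound on $\EE[\|G_{\mu,k}^{(i_k)}-\nabla_{i_k}f_\mu(x_k)\|_{i_k}^2]$; here the batch averaging kicks in, giving a factor $1/T_k$, and combining Lemma~\ref{Nes Lem}(c), Assumption~\ref{ass1} (variance $\sigma^2$) and Assumption~\ref{ass3} ($\|\nabla f\|\le M$) yields a bound of the form $\widetilde\sigma^2/T_k$ with $\widetilde\sigma^2=4(n+4)[2M^2+\sigma^2+\mu^2L_f^2(n+4)^2]$. The expectation over $i_k$ of $p_{i_k}^{-1}$-weighted or $L_{i_k}$-weighted quantities produces the $\min_{s\in\b}p_s(1-\tfrac{L_s}{2}\aaa_k)$ on the left and the $\max_{s\in\b}p_s$ on the right, mirroring the $\min/\max$ bookkeeping of Theorem~\ref{thm1}.

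After this, dividing by $\sum_k\aaa_k\min_{s\in\b}p_s(1-\tfrac{L_s}{2}\aaa_k)$ and recognizing that, under the definition \eqref{generalp} of $P_R$, the weighted average over $k$ is exactly $\EE_R[\cdot]$, I get a bound on $\EE[\|\p(x_R,\nabla f_\mu(x_R),\aaa_R)\|^2]$. The final step is to pass from $\nabla f_\mu$ to $\nabla f$ inside the gradient mapping: using Lemma~\ref{lemma2} again, $\|\p(x_R,\nabla f(x_R),\aaa_R)-\p(x_R,\nabla f_\mu(x_R),\aaa_R)\|\le\|\nabla f(x_R)-\nabla f_\mu(x_R)\|$, and then $(a+b)^2\le 2a^2+2b^2$ together with the estimate \eqref{fmu-f} from Lemma~\ref{Nes Lem}(b), $\|\nabla f_\mu-\nabla f\|\le\tfrac{\mu}{2}L_f(n+3)^{3/2}$, contributes the additive $\tfrac{\mu^2}{2}L_f^2(n+3)^3$ term. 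The absolute constant $4$ (versus $2$) in front of $[\Phi_\mu(x_1)-\Phi_\mu^*]$ comes from the factors of $2$ introduced by these non-expansion/$(a+b)^2$ splittings.

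The main obstacle I anticipate is the careful handling of the error term coming from replacing $\p_{i_k}(x_k,G_{\mu,k}^{(i_k)},\aaa_k)$ by $\p_{i_k}(x_k,\nabla_{i_k}f_\mu(x_k),\aaa_k)$ in a way that both keeps the $(1-\tfrac{L_s}{2}\aaa_k)$ coefficient positive and produces precisely the $1/T_k$-weighted variance term rather than something weaker; this requires threading Lemma~\ref{lemma1}, Lemma~\ref{lemma2}, and the batch-averaging variance bound together with the right Young-type splitting of the cross term so that no uncontrolled $\|\p\|^2$ term is left on the right-hand side. The conditional-expectation argument showing the cross terms vanish is routine given measurability of $\nabla_{i_k}f_\mu(x_k)$ with respect to $\tau_{[k-1]}$, as in Theorem~\ref{thm1}.
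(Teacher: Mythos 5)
Your plan follows essentially the same route as the paper's proof: the one-step descent on $\Phi_\mu$ via the block Lipschitz property, Lemma \ref{lemma1} for the projection inner product, Lemma \ref{lemma2} (non-expansion) to control the substitution of $G_{\mu,k}^{(i_k)}$ by $\nabla_{i_k}f_\mu(x_k)$, vanishing cross terms by conditioning, the martingale/batch-averaging bound giving $\w{\sigma}^2/T_k$, and the $\min/\max$ bookkeeping over blocks, with the same constants emerging from the $(a+b)^2$ splittings. The only cosmetic difference is that the paper performs the $\nabla f_\mu\to\nabla f$ conversion inside the per-iteration estimate (bounding $\|\p_{i_k}(x_k,\nabla f(x_k),\aaa_k)\|^2$ directly) rather than at the very end as you propose, which does not change the substance of the argument.
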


\begin{proof}
Denote $\w{\Delta}_k=G_{\mu,k}^{(i_k)}-\nabla_{i_k} f_{\mu}(x_k)$, then block $L_i$-Lipschitz property of $f_{\mu}$ implies
\bea
\nono&&f_{\mu}(x_{k+1})\leq f_{\mu}(x_k)+\nn\nabla f_{\mu}(x_k),x_{k+1}-x_k\mm+\f{L_{i_k}}{2}\|x_{k+1}-x_k\|^2  \\
\nono&&=f_{\mu}(x_k)-\aaa_k\nn \nabla_{i_k}f_{\mu}(x_k),\p_{i_k}(x_k,U_{i_k}G_{\mu,k}^{(i_k)},\aaa_k)\mm+\f{L_{i_k}}{2}\aaa_k^2\|\p_{i_k}(x_k,U_{i_k}G_{\mu,k}^{(i_k)},\aaa_k)\|_{i_k}^2 \\
\nono&&=f_{\mu}(x_k)-\aaa_k\nn G_{\mu,k}^{(i_k)},\p_{i_k}(x_k,U_{i_k}G_{\mu,k}^{(i_k)},\aaa_k)\mm +\f{L_{i_k}}{2}\aaa_k^2\|\p_{i_k}(x_k,U_{i_k}G_{\mu,k}^{(i_k)},\aaa_k)\|_{i_k}^2\\
\nono&&\ \ \ \ \  +\aaa_k\nn\w{\Delta}_k,\p(x_k,U_{i_k}G_{\mu,k}^{(i_k)},\aaa_k)\mm
\eea
Using Lemma \ref{lemma1} with $s=i_k$, $x^{+}=x_{k+1}$, $g^{(s)}=G_{\mu,k}^{(s)}$ to above inequality, we have
\bea
\nono&&f_{\mu}(x_{k+1})\leq f_{\mu}(x_k)-\big[\aaa_k\|\p_{i_k}(x_k,U_{i_k}G_{\mu,k}^{(i_k)},\aaa_k)\|_{i_k}+\chi_{i_k}(U_{i_k}^Tx_{k+1})-\chi_{i_k}(U_{i_k}^Tx_k)\big]\\
\nono&&\ \ \ \ \ \ \ \ \ \ \ \ \ \ \ \ \ +\f{L_{i_k}}{2}\aaa_k^2\|\p_{i_k}(x_k,U_{i_k}G_{\mu,k}^{(i_k)},\aaa_k)\|_{i_k}^2+\aaa_k\nn\w{\Delta}_k,\p_{i_k}(x_k,U_{i_k}G_{\mu,k}^{(i_k)},\aaa_k)\mm.
\eea
Note that $\chi_{i_k}(U_{i_k}^Tx_{k+1})-\chi_{i_k}(U_{i_k}^Tx_k)=\chi(x_{k+1})-\chi(x_k)$ and denote $\Phi_{\mu}=f_{\mu}(x)+\chi(x)$, it follows that
\bea
\nono&&\Phi_{\mu}(x_{k+1})\leq\Phi_{\mu}(x_k)-\aaa_k(1-\f{L_{i_k}}{2}\aaa_k)\|\pg\|_{i_k}^2\\
&&\ \ \ \ \ \ \ \ \ \ \ \ \ \ \ \ +\aaa_{k}\nn\w{\Delta}_k,\pg\mm \label{start}
\eea
By setting $s=i_k$, $x=x_k$, $\aaa=\aaa_k$, $g_1=\nabla f_{\mu}(x_k)$, $g_2=U_sG_{\mu,k}^{(s)}$ and $s=i_k$, $x=x_k$, $\aaa=\aaa_k$, $g_1=\nabla f_{\mu}(x_k)$, $g_2=\nabla f(x_k)$ in Lemma \ref{lemma2}, the following two relations are obtained,
\bea
\nono&&\|\pfm-\pg\|_{i_k}\leq\|\nabla_{i_k}f_{\mu}(x_k)-U_{i_k}\G\|_{i_k}, \\
\nono&&\|\pfm-\pf\|_{i_k}\leq\|\nabla_{i_k} f_{\mu}(x_k)-\nabla_{i_k}f(x_k)\|_{i_k}.
\eea
With above two relations, the following estimate for $\|\pg\|_{i_k}^2$ in \eqref{start} holds,
\bea
\nono&&\|\pf\|_{i_k}\leq2\|\pf-\pfm\|_{i_k}^2 \\
\nono&&\ \ \ \ \ \ \ \ \ \ \ \ \ \ \ \ \ \ \ \ \ \ \ \ \ \ \ \ \ \ \ \ \ \ +2\|\pfm\|_{i_k}^2, \\
\nono&& \ \ \ \ \ \ \ \ \ \ \ \ \ \ \ \ \ \ \ \ \ \ \ \ \ \ \ \ \ \ \ \ \leq 2\|\pf-\pfm\|_{i_k}^2 \\
\nono&&  \ \ \ \ \ \ \ \ \ \ \ \ \ \ \ \ \ \ \ \ \ \ \ \ \ \ \ \ \ \ \ \ \ \ +4\|\pfm-\pg\|_{i_k}^2 \\
\nono&&  \ \ \ \ \ \ \ \ \ \ \ \ \ \ \ \ \ \ \ \ \ \ \ \ \ \ \ \ \ \ \ \ \ \ +4\|\pg\|_{i_k}^2 \\
\nono&&\ \ \ \ \ \ \ \ \ \ \ \ \ \ \ \ \ \ \ \ \ \ \ \ \ \ \ \ \ \ \ \ \leq2\|\nabla_{i_k}f_{\mu}(x_k)-\nabla_{i_k}f(x_k)\|_{i_k}^2+4\|\G-\nabla_{i_k}f_{\mu}(x_k)\|_{i_k}^2 \\
&&\ \ \ \ \ \ \ \ \ \ \ \ \ \ \ \ \ \ \ \ \ \ \ \ \ \ \ \ \ \ \ \ \ \ +4\|\pg\|_{i_k}^2. \label{est1}
\eea
On the other hand, by using Cauchy inequality and Lemma \ref{lemma2}, we have
\bea
\nono&&\nn\w{\Delta}_k,\pg\mm=\nn\w{\Delta}_k,\pg-\pfm\mm  \\
\nono&&\ \ \ \ \ \ \ \ \ \ \ \ \ \ \ \ \ \ \ \ \ \ \ \ \ \ \ \ \ \ \ \ \ \ \ \ \ \ \ +\nn\w{\Delta}_k,\pfm\mm \\
\nono&&\ \ \ \ \ \ \ \ \ \ \ \ \ \ \ \ \ \ \ \ \ \ \ \ \ \ \ \ \ \ \ \ \ \ \  \leq \|\w{\Delta}_k\|_{i_k}\cdot\|\pg-\pfm\|_{i_k} \\
&&\ \ \ \ \ \ \ \ \ \ \ \ \ \ \ \ \ \ \ \ \ \ \ \ \ \ \ \ \ \ \ \ \ \ \ \ \ \ \ +\nn\w{\Delta}_k,\pfm\mm.   \label{est2}
\eea
Substitute estimate \eqref{est1} and \eqref{est2} into \eqref{start}, it follows that
\bea
\nono&&\Phi_{\mu}(x_{k+1})\leq\Phi_{\mu}(x_k)-\aaa_k(1-\f{L_{i_k}}{2}\aaa_k)\Big[\f{1}{4}\|\pf\|_{i_k}^2\\
\nono&&\ \ \ \ \ \ \ \ \ \ \ \ \ \ \ \ \ -\f{1}{2}\|\nabla_{i_k}f_{\mu}(x_k)-\nabla_{i_k}f(x_k)\|_{i_k}^2-\|\w{\Delta}_k\|_{i_k}^2\Big]+\aaa_k\|\w{\Delta}_k\|_{i_k}^2 \\
\nono&&\ \ \ \ \ \ \ \ \ \ \ \ \ \ \ \ \ +\aaa_k\nn\w{\Delta}_k,\pfm\mm.
\eea
Sum up both sides from $k=1$ to $T$ and rearrange terms, it follows that
\bea
\nono&&\sum_{k=1}^T\aaa_k(1-\f{L_{i_k}}{2}\aaa_k)\|\pf\|_{i_k}^2 \\
\nono&&\ \ \ \leq4\big[\Phi_{\mu}(x_k)-\Phi_{\mu}^{*}\big]+4\sum_{k=1}^T\aaa_k\nn\w{\Delta}_k,\pfm\mm  \\
&&\ \ \ \ +2\sum_{k=1}^T\aaa_k(1-\f{L_{i_k}}{2}\aaa_k)\|\nabla_{i_k}f_{\mu}(x_k)-\nabla_{i_k}f(x_k)\|_{i_k}^2+8\sum_{k=1}^T\aaa_k\|\w{\Delta}_k\|_{i_k}^2. \label{start2}
\eea
Denote $\zeta_k=(i_k,\tau_k)$, by Lemma \ref{Nes Lem} (a) and the fact that $\p_s(x_k,\nabla f_{\mu}(x_k),\aaa_k)$ is measurable with respect to the history $\zeta_{[k-1]}$ when $s=1,2,...,b$, we have
\bea
\nono&&\mbb E_{\zeta_{[k-1]}}[\nn\w{\Delta}_k,\pfm\mm] \\
&&=\sum_{s=1}^b\mbb E_{\zeta_{[k-1]}}[\nn G_{\mu,k}^{(s)}(x_k)-\nabla_{s} f_{\mu}(x_k),\p_s(x_k,\nabla f_{\mu}(x_k),\aaa_k)\mm]=0.  \label{fuzhu1}
\eea
Denote $\tau_{k,t}=(\xi_{k,t},u_{k,t})$ and $\w{\Delta}_{k,t}=G_{\mu}^{(i_k)}(x_k,\tau_{k,t})-\nabla_{i_k}f_{\mu}(x_k)$, $t=1,2,...,T_k$, $H_t=\sum_{i=1}^t\w{\Delta}_{k,t}$, $t=1,2,...,T_k$, $H_0=0$, then the property of stochastic oracle implies
\bea
\nono\mbb E_{\zeta_{[k-1]}}[\nn H_{t-1},\w{\Delta}_{k,t}\mm|H_{t-1}]=0, t=1,2,...,T_k,
\eea and
\bea
\nono &&\mbb E_{\zeta_{[k-1]}}[\|H_{T_k}\|_{i_k}^2]=\mbb E_{\zeta_{[k-1]}}[\|H_{T_k-1}\|_{i_k}^2]+\mbb E_{\zeta_{[k-1]}}[\|\w{\Delta}_{k,T_k}\|_{i_k}^2]+2\mbb E_{\zeta_{[k-1]}}[\nn H_{T_k-1},\w{\Delta}_{k,T_k}\mm] \\
\nono&&\ \ \ \ \ \ \ \ \ \ \ \ \ \ \ =\mbb E_{\zeta_{[k-1]}}[\|\w{\Delta}_{k,T_k}\|_{i_k}^2]+\mbb E_{\zeta_{[k-1]}}[\|H_{T_k-1}\|_{i_k}^2]=\cdots=\sum_{i=1}^{T_k}\mbb E_{\zeta_{[k-1]}}[\|\w{\Delta}_{k,i}\|_{i_k}^2].
\eea
Then the estimate for $\|\w{\Delta}_k\|_{i_k}^2$ in \eqref{start2} is obtained as follow,
\bea
\nono&&\mbb E_{\zeta_{[k-1]}}[\|\w{\Delta}_k\|_{i_k}^2]=\mbb E_{\zeta_{[k-1]}}[\|\f{1}{T_k}\sum_{t=1}^{T_k}\w{\Delta}_{k,t}\|_{i_k}^2]=\f{1}{T_k^2}\mbb E_{\zeta_{[k-1]}}[\|H_{T_k}\|_{i_k}^2]=\f{1}{T_k^2}\sum_{t=1}^{T_k}\mbb E_{\zeta_{[k-1]}}[\|\w{\Delta}_{k,t}\|_{i_k}^2] \\
\nono&&\ \ \ \ =\f{1}{T_k^2}\sum_{t=1}^{T_k}\sum_{s=1}^bp_s\mbb E_{\tau_{[k-1]}}[\|G_{\mu}^{(s)}(x_k,\tau_{k,t})-\nabla_{s}f_{\mu}(x_k)\|_{s}^2] \\ \nono&&\ \ \ \ \leq\f{\max_{s\in\b}p_s}{T_k^2}\sum_{t=1}^{T_k}\sum_{s=1}^b\mbb E_{\tau_{[k-1]}}[\|G_{\mu}^{(s)}(x_k,\tau_{k,t})-\nabla_{s}f_{\mu}(x_k)\|_{s}^2]\\
&& \ \ \ \ =\f{\max_{s\in\b}p_s}{T_k^2}\sum_{t=1}^{T_k}\mbb E_{\tau_{[k-1]}}[\|G_{\mu}(x_k,\tau_{k,t})-\nabla f_{\mu}(x_k)\|^2].\label{tui1}
\eea
Also note that
\bea
\nono&&\mbb E_{\tau_{[k-1]}}[\|G_{\mu}(x_k,\tau_{k,t})-\nabla f_{\mu}(x_k)\|^2]\leq2\mbb E_{\tau_{[k-1]}}[\|G_{\mu}(x_k,\tau_{k,t})\|^2]+2\mbb E_{\tau_{[k-1]}}[\|\nabla f_{\mu}(x_k)\|^2] \\
\nono&&\ \ \ \ \ \ \ \ \ \ \ \leq2\bigg[2(n+4)\big[E_{\tau_{[k-1]}}[\|\nabla f(x_k)\|^2]+\sigma^2\big]+\f{\mu^2}{2}L_f^2(n+6)^3\bigg] \\
\nono&&\ \ \ \ \ \ \ \ \ \ \ \ +2\bigg[2\mbb E_{\tau_{[k-1]}}[\|\nabla f_{\mu}(x_k)-\nabla f(x_k)\|^2]+2\mbb E_{\tau_{[k-1]}}[\|\nabla f(x_k)\|^2]\bigg] \\
\nono&&\ \ \ \ \ \ \ \ \ \ \ \leq 2\big[2(n+4)(M^2+\sigma^2)+\f{\mu^2}{2}L_f^2(n+6)^3\big]+2\big[\f{\mu^2}{2}L_f^2(n+3)^3+2\mu^2\big] \\
&&\ \ \ \ \ \ \ \ \ \ \ \leq4(n+4)\big[2M^2+\sigma^2+\mu^2L_f^2(n+4)^2\big]:=\w{\sigma}^2.  \label{sigma}
\eea
in which the inequality follows from Lemma \ref{Nes Lem} (c). Then \eqref{tui1},\eqref{sigma} implies
\bea
\mbb E_{\zeta_{[k-1]}}[\|\w{\Delta}_k\|_{i_k}^2]\leq\f{(\max_{s\in\b}p_s)\w{\sigma}^2}{T_k}. \label{fuzhu2}
\eea
Note that
\bea
\nono&&\mbb E_{\zeta_{[k-1]}}\big[(1-\f{L_{i_k}}{2}\aaa_k)\|\pf\|_{i_k}^2\big] \\
\nono&& =\sum_{s=1}^bp_s (1-\f{L_{s}}{2}\aaa_k)\mbb E_{\tau_{[k-1]}}\|\p_s(x_k,\nabla f(x_k),\aaa_k)\|_{s}^2\big] \\
\nono&&\geq\min_{s\in\b}\big\{p_s (1-\f{L_{s}}{2}\aaa_k)\big\}\bigg(\sum_{s=1}^b\mbb E_{\tau_{[k-1]}}[\|\p_s(x_k,\nabla f(x_k),\aaa_k)\|_{s}^2]\bigg) \\
&&=\min_{s\in\b}\big\{p_s (1-\f{L_{s}}{2}\aaa_k)\big\}\mbb E_{\tau_{[k-1]}}\big[\|\p(x_k,\nabla f(x_k),\aaa_k)\|^2\big]   \label{fuzhu3}
\eea
and
\bea
\nono&&\mbb E_{\zeta_{[k-1]}}\big[(1-\f{L_{i_k}}{2}\aaa_k)\|\nabla_{i_k}f_{\mu}(x_k)-\nabla_{i_k}f(x_k)\|_{i_k}^2\big] \\
\nono&&=\sum_{s=1}^bp_s(1-\f{L_s}{2}\aaa_k)\mbb E_{\tau_{[k-1]}}\big[\|\nabla_{s}f_{\mu}(x_k)-\nabla_{s}f(x_k)\|_{s}^2\big] \\
\nono&&\leq \max_{s\in\b}\big\{p_s(1-\f{L_s}{2}\aaa_k)\big\}\cdot\sum_{s=1}^b\mbb E_{\tau_{[k-1]}}\big[\|\nabla_{s}f_{\mu}(x_k)-\nabla_{s}f(x_k)\|_{s}^2\big]\\
\nono&&=\max_{s\in\b}\big\{p_s(1-\f{L_s}{2}\aaa_k)\big\}\mbb E_{\tau_{[k-1]}}\big[\|\nabla f_{\mu}(x_k)-\nabla f(x_k)\|^2\big]\\
&&\leq \f{\mu^2}{4}L_f^2(n+3)^3\max_{s\in\b}\big\{p_s(1-\f{L_s}{2}\aaa_k)\big\}.   \label{fuzhu4}
\eea
Combine \eqref{fuzhu1}, \eqref{fuzhu2}, \eqref{fuzhu3}, \eqref{fuzhu4}, and take expectation on both sides of \eqref{start2} with respect to $\zeta_{[T]}$, it follows that
\bea
\nono&&\sum_{k=1}^T\aaa_k\min_{s\in\b}\big\{p_s (1-\f{L_{s}}{2}\aaa_k)\big\}\mbb E_{\zeta_{[T]}}\big[\|\p(x_k,\nabla f(x_k),\aaa_k)\|^2\big]\\
\nono&&\leq4[\Phi_{\mu}(x_1)-\Phi_{\mu}^{*}]+8\max_{s\in\b}p_s\w{\sigma}^2\sum_{k=1}^T\f{\aaa_k}{T_k}\\
\nono&&\ \ \ +\f{\mu^2}{2}L_f^2(n+3)^3\sum_{k=1}^T\aaa_k\max_{s\in\b}\big\{p_s(1-\f{L_s}{2}\aaa_k)\big\}.
\eea
Divide both sides of the above inequality by $\sum_{k=1}^T\aaa_k\min_{s\in\b}\big\{p_s (1-\f{L_{s}}{2}\aaa_k)\big\}$ and note that $\sum_{k=1}^T\aaa_k\max_{s\in\b}\big\{p_s (1-\f{L_{s}}{2}\aaa_k)\big\}/\sum_{k=1}^T\aaa_k\min_{s\in\b}\big\{p_s (1-\f{L_{s}}{2}\aaa_k)\big\}\geq1$, the desired result is obtained.
\end{proof}

\begin{cor}\label{cor2}
Under Assumptions of Theorem \ref{general main}. Denote $\hat{L}=\max_{s\in\b}L_s$, suppose that in the ZS-BMD algorithm the stepsizes $\aaa_k=1/\hat{L}$, the random variables $\{i_k\}$ are uniformly distributed ($p_1=p_2=\cdots=p_b=1/b$). Also suppose that $G_{\mu,k}^{(i_k)}$ is computed as \eqref{minig} and the batch sample size of each step $T_k=T'$, then we have
\bea
\mbb E[\|\p(x_R,\nabla f(x_R),\aaa_R)\|^2]\leq\f{8b\hat{L}^2D_{\Phi}^2+8\mu^2L_f^2nb}{T}+\f{16\w{\sigma}^2b}{T'}+\f{\mu^2}{2}L_f^2(n+3)^3,\label{cor1}
\eea
in which $D_{\Phi}=[(\Phi(x_1)-\Phi^{*})/\hat{L}]^{\f{1}{2}}$, $\w{\sigma}^2$ is defined as in \eqref{sigma}, and the expectation is taken w.r.t. $R$, $i_{[T]}$, $\xi_{[T]}$ and $u_{[T]}$.
\end{cor}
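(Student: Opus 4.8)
The plan is to specialize the master estimate \eqref{generalin} of Theorem \ref{general main} to the parameter choices $p_s=1/b$, $\aaa_k\equiv 1/\hat{L}$ and $T_k\equiv T'$, and then simplify. First I would check admissibility: since $\hat{L}=\max_{s\in\b}L_s\geq L_s$ for every $s$, we have $1/\hat{L}\leq 1/L_s\leq 2/L_s$, so the requirement $\aaa_k\leq 2/L_s$ of Theorem \ref{general main} is satisfied and the theorem applies verbatim, with $R$ distributed according to \eqref{generalp}.

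Next I would evaluate the three parameter-dependent quantities in \eqref{generalin}. For the denominator, $\min_{s\in\b}p_s(1-\f{L_s}{2}\aaa_k)=\f{1}{b}\min_{s\in\b}\bigl(1-\f{L_s}{2\hat{L}}\bigr)=\f{1}{b}\bigl(1-\f{1}{2}\bigr)=\f{1}{2b}$, the minimum over $s$ being attained where $L_s=\hat{L}$; hence $\sum_{k=1}^T\aaa_k\min_{s\in\b}p_s(1-\f{L_s}{2}\aaa_k)=\f{T}{2b\hat{L}}$. For the noise term, $\max_{s\in\b}p_s=1/b$ and $\sum_{k=1}^T\aaa_k/T_k=T/(\hat{L}T')$, so $8(\max_{s\in\b}p_s)\w{\sigma}^2\sum_{k=1}^T\aaa_k/T_k=\f{8\w{\sigma}^2T}{b\hat{L}T'}$.

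For the leading numerator term I would pass from $\Phi_{\mu}$ to $\Phi$ using Lemma \ref{Nes Lem}(b): from $|f_{\mu}-f|\leq\f{\mu^2}{2}L_fn$ one gets $f_{\mu}(x_1)\leq f(x_1)+\f{\mu^2}{2}L_fn$ and $\Phi_{\mu}^{*}=\inf_{x}(f_{\mu}(x)+\chi(x))\geq\Phi^{*}-\f{\mu^2}{2}L_fn$, hence $\Phi_{\mu}(x_1)-\Phi_{\mu}^{*}\leq\Phi(x_1)-\Phi^{*}+\mu^2L_fn=\hat{L}D_{\Phi}^2+\mu^2L_fn$ by the definition $D_{\Phi}^2=(\Phi(x_1)-\Phi^{*})/\hat{L}$. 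Substituting these evaluations into \eqref{generalin} and multiplying by $2b\hat{L}/T$ (the reciprocal of the denominator) gives
\be
\mbb E\big[\|\p(x_R,\nabla f(x_R),\aaa_R)\|^2\big]\leq\f{8b\hat{L}^2D_{\Phi}^2}{T}+\f{8b\hat{L}L_f\mu^2n}{T}+\f{16\w{\sigma}^2}{T'}+\f{\mu^2}{2}L_f^2(n+3)^3.
\ee

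Finally I would coarsen this slightly to the stated form: using $\hat{L}\leq L_f$ — which follows from Assumption \ref{ass2}, since applying the global Lipschitz bound to the single-block displacement $y=x+U_se_s$ shows $L_s\leq L_f$ — in the second term, and $b\geq1$ in the third term, yields \eqref{cor1}. The argument is essentially routine substitution; the only points that need a little care are the identity $\min_{s\in\b}\bigl(1-\f{L_s}{2\hat{L}}\bigr)=\f12$ in the denominator and the $\Phi_{\mu}$-versus-$\Phi$ comparison, so I expect the $\Phi_{\mu}$ bookkeeping to be the only mild obstacle.
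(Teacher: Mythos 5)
Your proposal is correct and follows essentially the same route as the paper: specialize the master bound \eqref{generalin} using $|\Phi_{\mu}(x_1)-\Phi_{\mu}^{*}-(\Phi(x_1)-\Phi^{*})|\leq\mu^2L_fn$ and $\sum_{k=1}^T\aaa_k\min_{s\in\b}p_s(1-\f{L_s}{2}\aaa_k)\geq T/(2b\hat{L})$, then substitute $p_s=1/b$, $\aaa_k=1/\hat{L}$, $T_k=T'$. Your additional bookkeeping (checking $\aaa_k\leq 2/L_s$, and the coarsenings $\hat{L}\leq L_f$ in the $\mu^2$-term and $b\geq1$ in the variance term to reach the exact constants of \eqref{cor1}) only makes explicit what the paper's terse proof leaves implicit, so the argument matches.
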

\begin{proof}
Note that
\be
\nono|\Phi_{\mu}(x)-\Phi_{\mu}^{*}-(\Phi(x)-\Phi^{*})|\leq \mu^2L_fn,
\ee
and
\be
\nono\sum_{k=1}^T\aaa_k\min_{s\in\b} p_s(1-\f{L_s}{2}\aaa_k)\geq \sum_{k=1}^T\f{1}{\hat{L}}\cdot\f{1}{2b}=\f{T}{2b\hat{L}}.
\ee
Then, \eqref{cor1} follows by substituting the two inequalities into \eqref{generalin}.
\end{proof}
\begin{rmk}\label{comrmk1}
Corollary \ref{cor2} also indicates that, if the smoothing parameter is taken as $\mu= O(1/(n+4)^{\f{3}{2}}\sqrt{T})$, the batch sample size $T'$ is taken as $T'=O\big((n+4)T\big)$, in order to get an $\epsilon$-stationary point of composite problem \eqref{pro2}, the total number of calls of ZS-BMD to the SZO can be bounded by
$\o(b^2n/\epsilon^2)$, which demonstrates a linear dependence on the dimension and a quadratic dependence on the block index.
\end{rmk}
\begin{cor}
Under conditions of Corollary \ref{cor2}, let $\w{T}$ be a fixed total number of calls to the SZO, suppose the smoothing parameter $\mu$ satisfies
\be
\mu\leq \f{D_{\Phi}}{n+4}\sqrt{\f{1}{\w{T}}}, \label{smoothmu}
\ee
and the number of calls to SZO at each iteration step of the ZS-BMD method is
\be
T'=\bigg\lceil\min\bigg\{\max\bigg\{\f{\sqrt{(n+4)(2M^2+\sigma^2)\w{T}}}{\w{L}\w{D}},n+4\bigg\},\w{T}\bigg\}\bigg\rceil  \label{batchsize}
\ee
for some $\w{D}$, in which $\w{L}=\max\big\{L_f,\hat{L}\big\}$. Then the generalized Bregman projected gradient satisfies
\be
\nono\f{1}{\w{L}b}\mbb E\big\{\|\p(x_R,\nabla f(x_R),\aaa_R)\|^2\big\}\leq\textbf{B}_{\w{T}},
\ee
in which
\be
\textbf{B}_{\w{T}}=\f{64\sqrt{(n+4)(2M^2+\sigma^2)}}{\sqrt{\w{T}}}\bigg(\w{D}\gamma_1+\f{D_{\Phi}^2}{\w{D}}\bigg)+\f{(64\gamma_2+33)\w{L}D_{\Phi}^2(n+4)}{\w{T}},\label{dab}
\ee
and
\be
\nono\gamma_1=\max\bigg\{\f{\sqrt{(n+4)(2M^2+\sigma^2)}}{\w{L}\w{D}\sqrt{\w{T}}},1\bigg\}, \ \ \gamma_2=\max\bigg\{\f{n+4}{\w{T}},1\bigg\}.
\ee
\end{cor}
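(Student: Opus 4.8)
The plan is to substitute the prescribed $\mu$ and $T'$ into the bound of Corollary~\ref{cor2}, together with the relation between the iteration count and the oracle budget, and then to dominate each resulting error term by one of the two groups of terms in $\textbf{B}_{\w{T}}$. Since ZS-BMD runs $T$ iterations each costing $T'$ calls to the SZO, a budget of $\w{T}$ calls means $T=\lceil\w{T}/T'\rceil$, so $1/T\leq T'/\w{T}$, while $T'\leq\w{T}$ gives $1/T'\geq1/\w{T}$. Abbreviate $A:=\sqrt{(n+4)(2M^2+\sigma^2)\w{T}}/(\w{L}\w{D})$, so that \eqref{batchsize} reads $T'=\lceil\min\{\max\{A,n+4\},\w{T}\}\rceil$; since $\max\{A,n+4\}\geq1$ this yields $\min\{\max\{A,n+4\},\w{T}\}\leq T'\leq 2\max\{A,n+4\}\leq 2A+2(n+4)$ and $1\leq T'\leq\w{T}$. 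Dividing \eqref{cor1} by $\w{L}b$ and using $\hat{L}\leq\w{L}$, $L_f\leq\w{L}$, $b\geq1$, $(n+3)^3\leq(n+4)^3$, together with $\w{\sigma}^2=4(n+4)[2M^2+\sigma^2+(n+4)^2\mu^2L_f^2]$,
\be
\nono \f{1}{\w{L}b}\mbb E\big[\|\p(x_R,\nabla f(x_R),\aaa_R)\|^2\big]\leq\f{8\hat{L}^2D_{\Phi}^2+8\mu^2L_f^2n}{\w{L}T}+\f{64(n+4)[2M^2+\sigma^2+(n+4)^2\mu^2L_f^2]}{\w{L}T'}+\f{\mu^2L_f^2(n+3)^3}{2\w{L}b}.
\ee
It remains to bound the leading piece $64(n+4)(2M^2+\sigma^2)/(\w{L}T')$ by the first group of $\textbf{B}_{\w{T}}$ and every other (lower-order) piece by the second group.

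For the leading piece I would use $T'\geq\min\{\max\{A,n+4\},\w{T}\}$ and distinguish the cases according to which of $A$, $n+4$, $\w{T}$ is active in \eqref{batchsize}; a short computation gives in each case
\be
\nono \f{64(n+4)(2M^2+\sigma^2)}{\w{L}T'}\ \leq\ \f{64\sqrt{(n+4)(2M^2+\sigma^2)}}{\sqrt{\w{T}}}\,\w{D}\gamma_1 .
\ee
The convenient observation is that $\gamma_1=\max\{A/\w{T},1\}$, so $\gamma_1=1$ whenever $A\leq\w{T}$, and $\gamma_1=A/\w{T}$ supplies exactly the slack needed when $A>\w{T}$ (then $T'=\w{T}$ and the inequality is an equality); when $A$ is active one uses $T'\geq A$, and when $n+4$ is active one uses $T'\geq n+4$ together with the inequality $A<n+4$ to absorb $64(2M^2+\sigma^2)/\w{L}$.

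For the remaining pieces I would insert $\mu\leq D_{\Phi}/((n+4)\sqrt{\w{T}})$, $L_f\leq\w{L}$, $1\leq T'\leq\w{T}$, $1/T\leq T'/\w{T}$ and $T'\leq 2A+2(n+4)$. Then $8\hat{L}^2D_{\Phi}^2/(\w{L}T)\leq 8\w{L}D_{\Phi}^2T'/\w{T}\leq 16\w{L}D_{\Phi}^2A/\w{T}+16\w{L}D_{\Phi}^2(n+4)/\w{T}$, and the first summand equals $16\,\f{D_{\Phi}^2}{\w{D}}\,\f{\sqrt{(n+4)(2M^2+\sigma^2)}}{\sqrt{\w{T}}}$, which is absorbed into the $D_{\Phi}^2/\w{D}$ part of $\textbf{B}_{\w{T}}$. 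Everything else --- the $(n+4)$-leftover just produced, the $\mu^2n$-term, the $(n+4)^3\mu^2L_f^2/T'$ part of the leading bracket, and the $\mu^2(n+3)^3$-term --- collapses, after using the bound on $\mu$, to a multiple of $\w{L}D_{\Phi}^2(n+4)/\w{T}$, with the constants summing to at most $64\gamma_2+33$. Here $\gamma_2=\max\{(n+4)/\w{T},1\}$ is needed only when $\w{T}<n+4$, in which case $T'=\w{T}$, $T=1$, and the $1/T$- and $\mu^2$-terms are only $O((n+4)^2/\w{T}^2)$, still dominated by $\gamma_2(n+4)/\w{T}$. Adding the two groups produces $\textbf{B}_{\w{T}}$ as in \eqref{dab}.

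The arithmetic is routine; the part that needs care is the case bookkeeping forced by the nested $\min/\max$ and the ceiling in \eqref{batchsize}. One must check, in each of the boundary regimes --- $A<n+4$ (small noise or large $\w{D}$), $A>\w{T}$ (large noise or small $\w{D}$), and $\w{T}<n+4$ (too few oracle calls), which may overlap --- that the stated constants together with the correction factors $\gamma_1=\max\{A/\w{T},1\}$ and $\gamma_2=\max\{(n+4)/\w{T},1\}$ are exactly enough to cover every term; this is where I expect the work to lie. A secondary subtlety is to carry the relation $T=\lceil\w{T}/T'\rceil$ (which yields $1/T\leq T'/\w{T}$) in tandem with $T'\leq\w{T}$ (which yields $1/T'\geq1/\w{T}$), since these are what make the leading estimate tight in the central regime $n+4\leq A\leq\w{T}$.
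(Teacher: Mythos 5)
Your proposal is correct and follows essentially the same route as the paper's proof: substitute the choices of $\mu$ and $T'$ into \eqref{cor1}, use $T\gtrsim \w{T}/T'$ together with the bounds $\min\{A,\w{T}\}\leq T'\lesssim \min\{A,\w{T}\}+\min\{n+4,\w{T}\}$ coming from the nested $\min/\max$ (which is exactly where $\gamma_1=\max\{A/\w{T},1\}$ and $\gamma_2=\max\{(n+4)/\w{T},1\}$ enter, since $\min\{A,\w{T}\}=A/\gamma_1$ and $\min\{n+4,\w{T}\}=(n+4)/\gamma_2$), and then tally constants. The only deviation is your convention $T=\lceil\w{T}/T'\rceil$; the paper (consistently with a fixed budget of $\w{T}$ calls) takes $T=\lfloor\w{T}/T'\rfloor\geq\w{T}/(2T')$, which costs a factor $2$ on the $1/T$ terms, but the slack you already identified (e.g.\ $16$ versus $64$ in the $D_{\Phi}^2/\w{D}$ slot) absorbs it provided you use the sharper bound $T'\leq A/\gamma_1+(n+4)/\gamma_2+1$ rather than $T'\leq 2A+2(n+4)$ in that step.
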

\begin{proof}
Note that $T=\lfloor\w{T}/T'\rfloor$, it's obvious that $T\geq\w{T}/2T'$ and $T'\geq1$. Also note that $\mu$ satisfies \eqref{smoothmu}, then from Corollary \ref{cor2}, we have
\bea
\nono&&\eqref{cor1}\leq\f{16\w{L}^2D_{\Phi}^2bT'}{\w{T}}\bigg(1+\f{1}{(n+4)\w{T}}\bigg)+\f{64(n+4)(2M^2+\sigma^2)b}{T'}+\f{64(n+4)L_f^2D_{\Phi}^2b}{T'\w{T}}+\f{L_f^2D_{\Phi}^2(n+4)b}{2\w{T}} \\
\nono&&\ \ \ \ \ \ \ \ \ \leq\f{16\w{L}^2D_{\Phi}^2b}{\w{T}}\bigg[\f{\sqrt{(n+4)(2M^2+\sigma^2)\w{T}}}{\w{L}\w{D}\gamma_1}+\f{n+4}{\gamma_2}\bigg]+\f{16\w{L}^2D_{\Phi}^2b}{(n+4)\w{T}} \\
\nono&&\ \ \ \ \ \ \ \ \ \ \ \ +\f{64\gamma_1\w{L}\w{D}b\sqrt{(n+4)(2M^2+\sigma^2)}}{\sqrt{\w{T}}}+\f{64\gamma_2\w{L}^2D_{\Phi}^2(n+4)b}{\w{T}}+\f{\w{L}^2D_{\Phi}^2(n+4)b}{2\w{T}} \\
\nono&&\ \ \ \ \ \ \ \ \  \leq \f{16\w{L}D_{\Phi}^2b\sqrt{(n+4)(2M^2+\sigma^2)}}{\w{D}\sqrt{\w{T}}}+\f{16\w{L}^2D_{\Phi}^2b(n+4)}{\w{T}}+\f{16\w{L}^2D_{\Phi}^2b}{\w{T}} \\
\nono&&\ \ \ \ \ \ \ \ \ \ \ \  +\f{64\gamma_1\w{L}\w{D}b\sqrt{(n+4)(2M^2+\sigma^2)}}{\sqrt{\w{T}}}+\f{64\gamma_2\w{L}^2D_{\Phi}^2(n+4)b}{\w{T}}+\f{\w{L}^2D_{\Phi}^2(n+4)b}{2\w{T}} .
\eea
After rearranging terms, the desired result is obtained.
\end{proof}

\subsection{Two-phase optimization scheme for ZS-BMD}
We propose to establish the two-phase ZS-BMD (2-ZS-BMD) method in this section.
\begin{framed}
\noindent\textbf{Two-phase ZS-BMD:}

\noi\textbf{Input}: Initial point $x_1\in X_1\times X_2\times\cdots\times X_b$, generate the random variables $\{i_k\}$ that are uniformly distributed ($p_1=p_2=\cdots=p_b=1/b$), number of runs $S$, total $\w{T}$ of calls to SZO in each run of ZS-BMD, sample size $\t$ in the post-optimization phase.

\noi\textbf{Optimization phase}:

\noindent For $i=1,2,...,S$, call the ZS-BMD algorithm with input $x_1$, batch sizes $T_k=T'$ which is as in \eqref{batchsize}, iteration limit $T=\lfloor\w{T}/T'\rfloor$, stepsizes $\aaa_k=1/\hat{L}$, $k=1,2,...,T$, $\hat{L}$ is as in Corollary \ref{cor2}, smoothing parameter $\mu$ satisfying \eqref{smoothmu} and probability probability function $P_R$ in \eqref{generalp}. Output $\bar{x}_i=x_{R_i}$, $i=1,2,...,S$.

\noi\textbf{Post-optimization phase}:

\noindent Select a solution $\bar{x}_{*}$ from the candidate list $\{\bar{x}_1,\bar{x}_2,...,\bar{x}_S\}$ such that
\be
\nono \bar{x}_{*}=\arg\min_{i=1,2,...,S}\big\|\g_{G}(\bar{x}_{i})\big\|, \ \g_{G}(\bar{x}_{i})=\p(\bar{x}_i,G_{\mu,\t}(\bar{x}_i),\aaa_{R_i}),
\ee
in which $G_{\mu,\t}(x)=\f{1}{\t}\sum_{k=1}^{\t}G_{\mu}(x,\xi_k,u_k)$.

\noi\textbf{Output $\bar{x}_{*}$.}
\end{framed}
To  obtain main results for 2-ZS-BMD, the following basic lemma of martingale difference sequence is needed \cite{zero first}.
\begin{lem}\label{martingale}
For a polish space $\Omega$ with Borel probability measure $\mu$ and an increasing $\sigma$-subalgebras $\mathcal{F}_0=\{\Omega,\emptyset\}\subseteq\mathcal{F}_1\subseteq\mathcal{F}_2\subseteq\cdots$ of Borel $\sigma$-algebra of $\Omega$. Suppose that $\{\psi_i\}\in \mbb R^n$, $i=1,2,...\infty$ is a martingale sequence of Borel functions on $\Omega$ satisfying $\psi_i$ is $\mathcal{F}_i$ measurable and $\mbb E[\psi_i|\mathcal{F}_{i-1}]=0$. If $\mbb E[\|\psi_i\|^2]\leq\nu_i^2$ for any $i\geq1$, then for any $N\geq1$ and $\lambda\geq0$, the following large-deviation property of martingales holds:
\be
\nono Prob\bigg\{\big\|\sum_{i=1}^N\psi_i\big\|^2\geq\lambda\sum_{i=1}^N\sigma_i^2\bigg\}\leq\f{1}{\lambda}.
\ee
\end{lem}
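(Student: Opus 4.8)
The plan is to reduce the statement to an application of Markov's inequality, the real content being the second-moment bound $\mbb E\big[\|\sum_{i=1}^N\psi_i\|^2\big]\le\sum_{i=1}^N\nu_i^2$. First I would expand the squared norm as $\|\sum_{i=1}^N\psi_i\|^2=\sum_{i=1}^N\|\psi_i\|^2+2\sum_{1\le i<j\le N}\nn\psi_i,\psi_j\mm$ and show that every cross term has zero expectation. For $i<j$, the vector $\psi_i$ is $\mathcal{F}_i$-measurable, hence $\mathcal{F}_{j-1}$-measurable since the $\sigma$-subalgebras are increasing; therefore by the tower property $\mbb E[\nn\psi_i,\psi_j\mm]=\mbb E\big[\nn\psi_i,\mbb E[\psi_j\mid\mathcal{F}_{j-1}]\mm\big]=0$, using the martingale-difference hypothesis $\mbb E[\psi_j\mid\mathcal{F}_{j-1}]=0$. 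This is the only place where the filtration and the martingale property enter.

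Next I would combine this orthogonality with the hypothesis $\mbb E[\|\psi_i\|^2]\le\nu_i^2$ to obtain $\mbb E\big[\|\sum_{i=1}^N\psi_i\|^2\big]=\sum_{i=1}^N\mbb E[\|\psi_i\|^2]\le\sum_{i=1}^N\nu_i^2$. If $\sum_{i=1}^N\nu_i^2=0$ the claim is vacuous, since then $\sum_{i=1}^N\psi_i=0$ almost surely and the probability in question is $0$; so one may assume $\sum_{i=1}^N\nu_i^2>0$. Finally, for any $\lambda\ge0$, Markov's inequality applied to the nonnegative random variable $\|\sum_{i=1}^N\psi_i\|^2$ gives $Prob\big\{\|\sum_{i=1}^N\psi_i\|^2\ge\lambda\sum_{i=1}^N\nu_i^2\big\}\le\mbb E\big[\|\sum_{i=1}^N\psi_i\|^2\big]\big/\big(\lambda\sum_{i=1}^N\nu_i^2\big)\le1/\lambda$, which is the assertion (here $\sigma_i$ is identified with $\nu_i$, the $\sigma_i^2$ appearing in the statement being the same variance bound $\nu_i^2$).

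There is essentially no hard step: the only point requiring care is the conditioning argument for the cross terms, where one must condition on $\mathcal{F}_{j-1}$ (the largest index strictly below $j$) so that $\psi_i$ for every $i<j$ is measurable, and invoke integrability so that the tower property and the factorization $\mbb E[\nn\psi_i,\psi_j\mm\mid\mathcal{F}_{j-1}]=\nn\psi_i,\mbb E[\psi_j\mid\mathcal{F}_{j-1}]\mm$ are legitimate — this is guaranteed by the $L^2$ bounds $\mbb E[\|\psi_i\|^2]\le\nu_i^2<\infty$ together with Cauchy--Schwarz. Since $N$ and $\lambda$ are arbitrary, no uniformity or limiting argument is needed, and the proof is complete.
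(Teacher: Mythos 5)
Your proof is correct: the expansion of $\big\|\sum_{i=1}^N\psi_i\big\|^2$, the vanishing of the cross terms via the tower property and $\mbb E[\psi_j\mid\mathcal{F}_{j-1}]=0$, and the final application of Markov's inequality constitute exactly the standard argument for this bound, which is what the paper relies on — the lemma is stated here without proof and attributed to the cited reference of Ghadimi and Lan, whose proof proceeds the same way. You also correctly handled the notational slip in the statement by identifying $\sigma_i^2$ with the hypothesized variance bound $\nu_i^2$.
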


We denote $\g_f(\bar{x}_i)=\p(\bar{x}_i,\nabla f(\bar{x}_i),\aaa_{R_i})$ and $\g_{f_{\mu}}(\bar{x}_i)=\p(\bar{x}_i,\nabla f_{\mu}(\bar{x}_i),\aaa_{R_i})$ in which $\p$ is defined as in last section. Then $\g_f(\bar{x}_i)$ denotes the generalized gradient of function $f$ at $\bar{x}_i$.
\begin{thm}\label{promain1}
Under Assumptions \ref{ass1}, \ref{ass2}, \ref{ass3}. Let $\textbf{B}_{\w{T}}$ be defined as in \eqref{dab}, $b$ is the total block number as before. Then the 2-ZS-BMD method for constrained composite optimization problem \eqref{pro2} has the following probability estimate:
\bea
\nono&&Prob\bigg\{\big\|\g_f(\bar{x}_{*})\big\|^2\geq16b\w{L}\textbf{B}_{\w{T}}+\f{3D_{\Phi}^2L_f^2b(n+4)}{\w{T}}\\
&&\ \ \ \ \ \ \ \ \ \ \ +\f{32(n+4)\lambda}{\t}\bigg[2M^2+\sigma^2+\f{D_{\Phi}^2L_f^2b}{\w{T}}\bigg]\bigg\}\leq\f{S+1}{\lambda}+2^{-S}, \ \forall\lambda>0.
\eea
\end{thm}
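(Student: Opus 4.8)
The plan is to combine the single-run bound from Corollary~2 (through $\textbf{B}_{\w{T}}$) with a standard "min-over-$S$-runs amplifies confidence" argument, and then separately control the extra error introduced by replacing the exact gradient $\nabla f(\bar x_i)$ by the sampled estimator $G_{\mu,\t}(\bar x_i)$ in the post-optimization phase. First I would recall that, for each run $i$, Corollary~2 gives $\mbb E[\|\g_{f_\mu}(\bar x_i)\|^2]\le \mbb E[\|\p(x_{R_i},\nabla f(x_{R_i}),\aaa_{R_i})\|^2]\le \w{L}b\,\textbf{B}_{\w{T}}$ after the parameter choices \eqref{smoothmu}, \eqref{batchsize} are substituted; passing from $\g_{f_\mu}$ to $\g_f$ costs an additive $\tfrac{\mu^2}{4}L_f^2(n+3)^3$-type term via Lemma~\ref{lemma2} and \eqref{fmu-f}, which under \eqref{smoothmu} is of order $D_\Phi^2 L_f^2 b(n+4)/\w{T}$ (this produces the $\tfrac{3D_\Phi^2L_f^2 b(n+4)}{\w{T}}$ summand, with the constant $3$ absorbing $(a+b)^2\le 2a^2+2b^2$ slack). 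By Markov's inequality applied to each run, $Prob\{\|\g_f(\bar x_i)\|^2 > \lambda\cdot(\text{that bound})\} \le 1/\lambda$ is not quite what we want; instead I would show $\mbb E[\min_i\|\g_{f}(\bar x_i)\|^2]$ is not the right object either — the cleaner route is: the events $\{\|\g_{f_\mu}(\bar x_i)\|^2 \ge \lambda\, \w{L}b\textbf{B}_{\w{T}}\}$ are independent across $i=1,\dots,S$ (since the runs use independent randomness), each has probability $\le 1/\lambda$ by Markov, so all of them occurring has probability $\le \lambda^{-S}$; choosing $\lambda$ a fixed constant (e.g. $2$) gives the $2^{-S}$ term, and on the complementary event at least one $\bar x_i$ is good.

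Next I would handle the post-optimization selection. Write $\bar\Delta_i = G_{\mu,\t}(\bar x_i) - \nabla f_\mu(\bar x_i)$. By Lemma~\ref{lemma2} (block non-expansion of $\p$) and $(a+b)^2\le 2a^2+2b^2$,
\be
\nono \|\g_f(\bar x_{*})\|^2 \le 2\|\g_G(\bar x_{*})\|^2 + 4\|\g_{f_\mu}(\bar x_{*})-\g_G(\bar x_{*})\|^2 + 4\|\g_f(\bar x_{*})-\g_{f_\mu}(\bar x_{*})\|^2,
\ee
and each of the last two terms is bounded by $\|\bar\Delta_{*}\|^2$ and by the $\mu$-bias term respectively; the same inequality run backwards bounds $\|\g_G(\bar x_i)\|^2$ by $\|\g_f(\bar x_i)\|^2$ plus $\|\bar\Delta_i\|^2$ plus bias, for the good index. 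Since $\bar x_{*}$ minimizes $\|\g_G(\cdot)\|$ over the list, $\|\g_G(\bar x_{*})\|^2 \le \|\g_G(\bar x_i)\|^2$ for the good $i$, so everything reduces to controlling $\|\bar\Delta_i\|^2$ for all $i=1,\dots,S$ simultaneously. Here I would invoke Lemma~\ref{martingale}: conditioned on $\bar x_i$, $G_{\mu,\t}(\bar x_i)$ is an average of $\t$ i.i.d.\ unbiased estimators of $\nabla f_\mu(\bar x_i)$, each with second-moment bounded (via Lemma~\ref{Nes Lem}(c) and Assumption~\ref{ass3}) by something of order $(n+4)[2M^2+\sigma^2+\mu^2L_f^2(n+4)^2]$, hence $\mbb E[\|\bar\Delta_i\|^2 \mid \bar x_i]\le \tfrac{C(n+4)}{\t}[2M^2+\sigma^2+\mu^2 L_f^2(n+4)^2]$; under \eqref{smoothmu} the $\mu^2$ piece is $O(D_\Phi^2 L_f^2 b/\w{T})$ after the $(n+4)^2$ is absorbed, matching the bracketed term in the statement. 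Then $Prob\{\|\bar\Delta_i\|^2 \ge \lambda\cdot(\text{that bound})\}\le 1/\lambda$ for each fixed $i$, and a union bound over $i=1,\dots,S$ gives failure probability $\le S/\lambda$.

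Finally I would assemble: with probability at least $1 - 2^{-S} - S/\lambda - 1/\lambda = 1-2^{-S}-(S+1)/\lambda$, simultaneously (a) some $\bar x_{i_0}$ has $\|\g_{f_\mu}(\bar x_{i_0})\|^2 \le 2\w{L}b\textbf{B}_{\w{T}}$ (taking $\lambda=2$ there, harmless), and (b) $\|\bar\Delta_i\|^2 \le \lambda\cdot(\text{variance bound})$ for every $i$; chaining the two $(a+b)^2$-inequalities above, collecting the $\mu$-bias terms into $\tfrac{3D_\Phi^2 L_f^2 b(n+4)}{\w{T}}$ and the sampling terms into $\tfrac{32(n+4)\lambda}{\t}[2M^2+\sigma^2+\tfrac{D_\Phi^2 L_f^2 b}{\w{T}}]$, and absorbing the constants in front of $\textbf{B}_{\w{T}}$ into the displayed $16b\w{L}$, yields exactly the claimed inequality. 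The main obstacle I anticipate is bookkeeping of constants: one must be careful that the factor-of-2/4 losses from each application of Lemma~\ref{lemma2} and of $(a+b)^2\le 2a^2+2b^2$ (there are several, nested) are tracked so that the final coefficients come out to $16$, $3$, and $32$ respectively, and that the $b$-dependence propagated through $\textbf{B}_{\w{T}}$ and through $\|\p(\cdot)\|^2 = \sum_s \|\p_s\|_s^2$ is consistent; the probabilistic skeleton (independence across runs for the $2^{-S}$ term, union bound across runs for the $S/\lambda$ term, one more $1/\lambda$ for the selected point) is routine once the deterministic inequality relating $\|\g_f(\bar x_{*})\|$ to $\min_i\|\g_G(\bar x_i)\|$, the biases, and the $\bar\Delta_i$'s is in place.
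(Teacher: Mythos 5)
Your proposal follows essentially the same route as the paper's proof: the deterministic decomposition of $\|\g_f(\bar{x}_{*})\|^2$ via the argmin property of $\bar{x}_{*}$, Lemma \ref{lemma2} and repeated $(a+b)^2\le 2a^2+2b^2$; independence of the $S$ runs plus Markov with constant $2$ giving the $2^{-S}$ term; Lemma \ref{martingale} plus a union bound over the $S$ candidates and one extra application at $\bar{x}_{*}$ giving the $(S+1)/\lambda$ term; and the $\mu$-bias absorbed through \eqref{smoothmu} into the $D_{\Phi}^2L_f^2b(n+4)/\w{T}$ term. The only small correction is that the per-run Markov step should be applied directly to $\|\g_f(\bar{x}_i)\|^2$ (whose expectation the corollary defining $\textbf{B}_{\w{T}}$ bounds by $b\w{L}\textbf{B}_{\w{T}}$), not to $\|\g_{f_{\mu}}(\bar{x}_i)\|^2$, exactly as the paper does; this does not change your argument since your decomposition already carries the $\g_f$--$\g_{f_{\mu}}$ bias terms.
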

\begin{proof}
Start from the definition of $\bar{x}_{*}$ in 2-ZS-BMD and the notations of $\g_f$ and $\g_{f_{\mu}}$, we have
\bea
\nono&&\big\|\g_G(\bar{x}_{*})\big\|^2=\min_{i=1,2,...,S}\big\|\g_G(\bar{x}_i)\big\|^2 \\
\nono&&=\min_{i=1,2,...,S}\big\|\g_G(\bar{x}_i)-\g_{f_{\mu}}(\bar{x}_i)+\g_{f_{\mu}}(\bar{x}_i)\big\|^2 \\
\nono&&\leq\min_{i=1,2,...,S}2\big[\big\|\g_G(\bar{x}_i)-\g_{f_{\mu}}(\bar{x}_i)\big\|^2+\big\|\g_{f_{\mu}}(\bar{x}_i)\big\|^2\big] \\
\nono&&\leq\min_{i=1,2,...,S}2\big[\big\|\g_G(\bar{x}_i)-\g_{f_{\mu}}(\bar{x}_i)\big\|^2+2\big\|\g_{f_{\mu}}(\bar{x}_i)-\g_f(\bar{x}_i)\big\|^2+2\big\|\g_f(\bar{x}_i)\big\|^2\big] \\ \nono&&\leq4\min_{i=1,2,...,S}\big\|\g_f(\bar{x}_i)\big\|^2+2\max_{i=1,2,...,S}\big\|\g_G(\bar{x}_i)-\g_{f_{\mu}}(\bar{x}_i)\big\|^2\\
&&\ \ \ +4\max_{i=1,2,...,S}\big\|\g_{f_{\mu}}(\bar{x}_i)-\g_f(\bar{x}_i)\big\|^2.\label{ine1}
\eea
Then, it follows that
\bea
\nono&&\big\|\g_f(\bar{x}_{*})\big\|^2\leq2\big\|\g_G(\bar{x}_{*})\big\|^2+2\big\|\g_G(\bar{x}_{*})-\g_f(\bar{x}_{*})\big\|^2  \\
\nono&&\leq2\big\|\g_G(\bar{x}_{*})\big\|^2+4\big\|\g_G(\bar{x}_{*})-\g_{f_{\mu}}(\bar{x}_{*})\big\|^2+4\big\|\g_{f_{\mu}}(\bar{x}_{*})-\g_f(\bar{x}_{*})\big\|^2 \\
\nono&& \leq 8\min_{i=1,2,...,S}\big\|\g_f(\bar{x}_i)\big\|^2+4\max_{i=1,2,...,S}\big\|\g_G(\bar{x}_i)-\g_{f_{\mu}}(\bar{x}_i)\big\|^2\\
\nono&&\ \ +8\max_{i=1,2,...,S}\|\g_{f_{\mu}}(\bar{x}_i)-\g_f(\bar{x}_i)\|^2+4\big\|\g_G(\bar{x}_*)-g_{f_{\mu}}(\bar{x}_{*})\big\|^2 \\
\nono&&\ \ +4\big\|\g_{f_{\mu}}(\bar{x}_{*})-\g_f(\bar{x}_{*})\big\|^2 \\
\nono&&\leq 8\min_{i=1,2,...,S}\big\|\g_f(\bar{x}_i)\big\|^2+4\max_{i=1,2,...,S}\big\|\g_G(\bar{x}_i)-\g_{f_{\mu}}(\bar{x}_i)\big\|^2 \\
&&\ \ +4\big\|\g_G(\bar{x}_{*})-\g_{f_{\mu}}(\bar{x}_{*})\big\|^2+12\max_{i=1,2,...,S}\big\|\g_{f_{\mu}}(\bar{x}_i)-\g_f(\bar{x}_i)\big\|^2, \label{two1}
\eea
in which the second inequality follows by substituting \eqref{ine1} into the first term of the right hand side. Use the fact that $\bar{x}_i$, $i=1,2,...,S$ are independent and Markov inequality, it can be obtained that
\be
Prob\bigg\{\min_{i=1,2,...,S}\big\|\g_f(\bar{x}_i)\big\|^2\geq 2b\w{L}\textbf{B}_{\w{T}}\bigg\}=\prod_{i=1}^SProb\bigg\{\big\|\g_f(\bar{x}_i)\big\|^2\geq 2b\w{L}\textbf{B}_{\w{T}}\bigg\}\leq\f{1}{2^S}. \label{two2}
\ee
Denote $\frkd_{i,k}=G_{\mu}(\bar{x}_i,\xi_k,u_k)-\nabla f_{\mu}(\bar{x}_i)$, $k=1,2,...,\t$, it follows from previous calculation and \eqref{smoothmu} that
\bea
\mbb E[\|\frkd_{i,k}\|^2]\leq 4(n+4)\bigg(2M^2+\sigma^2+\f{D_{\Phi}^2L_f^2b}{\w{T}}\bigg)=:\c_{\w{T}}. \label{dac}
\eea
Since $G_{\mu,\t}(\bar{x}_i)-\nabla f_{\mu}(\bar{x}_i)=\sum_{k=1}^{\t}\frkd_{i,k}/\t$, Lemma \ref{lemma2} , \eqref{dac}, and Lemma \ref{martingale} imply that, for any $i=1,2,...,S$,
\bea
\nono&&Prob\bigg\{\big\|\g_G(\bar{x}_i)-\g_{f_{\mu}}(\bar{x}_i)\big\|^2\geq\f{\lambda\c_{\w{T}}}{\t}\bigg\}\leq Prob\bigg\{\big\|G_{\mu,\t}(\bar{x}_i)-\nabla f_{\mu}(\bar{x}_i)\big\|^2\geq\f{\lambda\c_{\w{T}}}{\t}\bigg\}  \\
\nono&&\ \ \ \ \ \ \ \ \ \ \ \ \ \ \ \ \ \ \ \  \ \ \ \ \ \ \ \ \ \ \  \ \ \ \ \ \ \ \ \ \ \ \ \ \ \ \ =Prob\bigg\{\big\|\sum_{k=1}^{\t}\frkd_{i,k}\big\|^2\geq\lambda\t\c_{\w{T}}\bigg\}\leq\f{1}{\lambda}, \forall \lambda>0,
\eea
in which the first inequality follows from the block non-expansion property and the second inequality follows from Lemma \ref{martingale}. Then it follows that
\be
Prob\bigg\{\max_{i=1,2,...,S}\big\|\g_G(\bar{x}_i)-\g_{f_{\mu}}(\bar{x}_i)\big\|^2\geq\f{\lambda\c_{\w{T}}}{\t}\bigg\}\leq\f{S}{\lambda},\forall \lambda>0. \label{two3}
\ee
and
\be
Prob\bigg\{\big\|\g_G(\bar{x}_{*})-\g_{f_{\mu}}(\bar{x}_{*})\big\|^2\geq\f{\lambda\c_{\w{T}}}{\t}\bigg\}\leq\f{1}{\lambda},\forall \lambda>0.\label{two4}
\ee
Also note that
\be
\nono\big\|\g_{f_{\mu}}(\bar{x}_i)-\g_f(\bar{x}_i)\big\|^2\leq\big\|\nabla f_{\mu}(\bar{x}_i)-\nabla f(\bar{x}_i)\|^2\leq\f{\mu^2}{4}L_f^2(n+3)^3\leq\f{D_{\Phi}^2L_f^2b(n+4)}{4\w{T}},
\ee
take $\max$ over index $i=1,2,...,S$ on both sides, it follows that
\be
\max_{i=1,2,...,S}\big\|\g_{f_{\mu}}(\bar{x}_i)-\g_f(\bar{x}_i)\big\|^2\leq\f{D_{\Phi}^2L_f^2b(n+4)}{4\w{T}}. \label{inef}
\ee
Now combine \eqref{two1}, \eqref{two2}, \eqref{two3}, \eqref{two4} and \eqref{inef}, the desired result then follows.
\end{proof}

\begin{cor}\label{compe}
Under assumptions of Theorem \ref{promain1}, let $\epsilon>0$ and $\Lambda\in (0,1)$, suppose the parameters $(S,\w{T},\t)$ are selected as
\bea
\nono&&S(\Lambda)=\big\lceil \log_2(\f{2}{\Lambda})\big\rceil,\label{S}\\
\nono&&\w{T}(\epsilon)=\bigg\lceil\max\bigg\{n+4, \f{(n+4)(2M^2+\sigma^2)}{\w{L}^2\w{D}^2},\f{99\cdot8^2(n+4)b\w{L}^2D_{\Phi}^2}{\epsilon},\\
\nono&&\ \ \ \ \ \ \ \ \ \  \bigg[\f{66\cdot32b\sqrt{(n+4)(2M^2+\sigma^2)}}{\epsilon}\bigg(\w{D}+\f{D_{\Phi}^2}{\w{D}}\bigg)\bigg]^2\bigg\} \bigg\rceil \label{wt}\\
\nono&&\t(\epsilon,\Lambda)=\bigg\lceil\f{32(n+4)\cdot2(S+1)}{\Lambda}\max\bigg\{1,\f{16(2M^2+\sigma^2)}{\epsilon}\bigg\} \bigg\rceil  \label{gt}
\eea
then, the 2-ZS-BMD computes an $(\epsilon,\Lambda)$-solution of problem \eqref{pro2} after taking at most total number of calls
\be
S(\Lambda)\big[\w{T}(\epsilon)+\t(\epsilon,\Lambda)\big] \label{comp}
\ee
to SZO.
\end{cor}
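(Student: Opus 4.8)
The plan is to feed the prescribed triple $(S,\w{T},\t)$ into the large-deviation estimate of Theorem \ref{promain1} and tune it so that simultaneously the failure probability $\frac{S+1}{\lambda}+2^{-S}$ drops to $\Lambda$ and the deviation threshold inside that probability drops to $\epsilon$; the oracle count then follows by simply inspecting the two phases of the algorithm.

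First, for the confidence level I would split the requirement $\frac{S+1}{\lambda}+2^{-S}\le\Lambda$ into the two halves $2^{-S}\le\Lambda/2$ and $\frac{S+1}{\lambda}\le\Lambda/2$. The first is exactly $S\ge\log_2(2/\Lambda)$, which is the reason $S(\Lambda)=\lceil\log_2(2/\Lambda)\rceil$ is chosen; the second pins down the still-free parameter $\lambda$ of Theorem \ref{promain1} as $\lambda=2(S+1)/\Lambda$ (precisely the factor $2(S+1)/\Lambda$ that reappears in the definition of $\t(\epsilon,\Lambda)$). With these choices $\frac{S+1}{\lambda}+2^{-S}\le\Lambda$, so it remains only to verify that, with the stated $\w{T}$ and $\t$, the deterministic threshold
\[
\Gamma:=16b\w{L}\,\textbf{B}_{\w{T}}+\frac{3D_{\Phi}^2L_f^2b(n+4)}{\w{T}}+\frac{32(n+4)\lambda}{\t}\Big[2M^2+\sigma^2+\frac{D_{\Phi}^2L_f^2b}{\w{T}}\Big]
\]
is at most $\epsilon$; then $Prob\{\|\g_f(\bar{x}_{*})\|^2>\epsilon\}\le Prob\{\|\g_f(\bar{x}_{*})\|^2\ge\Gamma\}\le\Lambda$, i.e. $\bar{x}_{*}$ is an $(\epsilon,\Lambda)$-solution of \eqref{pro2}.

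Next, for the accuracy I would first simplify $\textbf{B}_{\w{T}}$. The first two terms in the $\max$ defining $\w{T}(\epsilon)$ guarantee $\w{T}\ge n+4$ and $\w{T}\ge(n+4)(2M^2+\sigma^2)/(\w{L}^2\w{D}^2)$, hence $\gamma_1=\gamma_2=1$ in \eqref{dab}, so $\textbf{B}_{\w{T}}$ collapses to the sum of an $O(\w{T}^{-1/2})$ term proportional to $b\w{L}\sqrt{(n+4)(2M^2+\sigma^2)}\,(\w{D}+D_{\Phi}^2/\w{D})$ and an $O(\w{T}^{-1})$ term proportional to $b\w{L}^2D_{\Phi}^2(n+4)$. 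I would then split $\epsilon$ into three parts: the $\w{T}^{-1/2}$ contribution of $16b\w{L}\textbf{B}_{\w{T}}$ is controlled by the fourth (squared) term in the $\max$ defining $\w{T}(\epsilon)$; the $\w{T}^{-1}$ contributions -- the $\w{T}^{-1}$ part of $16b\w{L}\textbf{B}_{\w{T}}$ together with $3D_{\Phi}^2L_f^2b(n+4)/\w{T}$ -- are controlled by the third term in that $\max$ (using $L_f\le\w{L}$); and the last summand I would write as $\frac{32(n+4)\lambda}{\t}(2M^2+\sigma^2)+\frac{32(n+4)\lambda}{\t}\cdot\frac{D_{\Phi}^2L_f^2b}{\w{T}}$, using that the choice of $\t(\epsilon,\Lambda)$ gives $\frac{32(n+4)\lambda}{\t}\le\min\{1,\epsilon/(16(2M^2+\sigma^2))\}$, so the first piece is $\le\epsilon/16$ and -- combined with the third term in the $\max$ bounding $D_{\Phi}^2L_f^2b/\w{T}$ -- the second piece is a further negligible fraction of $\epsilon$. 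Summing the three parts gives $\Gamma\le\epsilon$.

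Finally, for the count: the optimization phase runs ZS-BMD $S$ times, each run making $\w{T}$ calls to the SZO in total, and the post-optimization phase evaluates $G_{\mu,\t}(\bar{x}_i)$ for each of the $S$ candidates at a cost of $\t$ SZO calls apiece, so the total is $S(\w{T}+\t)$, which is \eqref{comp}. The main obstacle is the constant bookkeeping in the accuracy step: one must carefully track the numerical factors produced by expanding $\textbf{B}_{\w{T}}$ and by the constants $16$, $32$, $2(S+1)/\Lambda$ inherited from Theorem \ref{promain1}, and then match each of the four terms in the $\max$ defining $\w{T}(\epsilon)$ to exactly one error contribution so that every piece stays below its allotted share of $\epsilon$; the reductions $\gamma_1=\gamma_2=1$ and the domination of the $D_{\Phi}^2L_f^2b/\w{T}$ terms, both forced by the lower bounds on $\w{T}$, are exactly where the first two (otherwise inert-looking) terms of that $\max$ get used.
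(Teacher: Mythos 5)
Your proposal follows essentially the same route as the paper's own proof: set $\lambda=2(S+1)/\Lambda$ so that $\frac{S+1}{\lambda}+2^{-S}\le\Lambda$, use the first two terms of the $\max$ defining $\w{T}(\epsilon)$ to force $\gamma_1=\gamma_2=1$ in $\textbf{B}_{\w{T}}$, match the remaining two terms of the $\max$ and the choice of $\t$ to the three error contributions in the threshold of Theorem \ref{promain1} so that it stays below $\epsilon$, and count $S(\w{T}+\t)$ SZO calls. The only part left implicit is the explicit numerical bookkeeping (the paper's $\frac{3\epsilon}{4}+\frac{\epsilon}{2112}<\frac{7\epsilon}{8}$ and $<\frac{\epsilon}{8}$ splits), which you correctly identify as the remaining routine work.
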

\begin{proof}
It's obvious that the total number of calls to SZO in 2-ZS-BMD algorithm is bounded by $S(\Lambda)\big[\w{T}(\epsilon)+\t(\epsilon,\Lambda)\big]$. It's sufficient to show that $\bar{x}_{*}$ is indeed the $(\epsilon,\Lambda)$-solution of the problem \eqref{pro2}. By the selection rule of $\w{T}(\epsilon)$ and the definition of $\textbf{B}_{\w{T}}$ as in \eqref{dab}, it can be obtained that
\bea
\nono&&\textbf{B}_{\w{T}}\leq\f{99\w{L}D_{\Phi}^2(n+4)}{\w{T}}+\f{66\sqrt{(n+4)(2M^2+\sigma^2)}}{\sqrt{\w{T}}}\bigg(\w{D}+\f{D_{\Phi}^2}{\w{D}}\bigg)  \\
\nono&&\ \ \ \ \ \leq\f{\epsilon}{64\w{L}b}+\f{\epsilon}{32\w{L}b}=\f{3\epsilon}{64\w{L}b},
\eea
which also shows that
\be
16b\w{L}\textbf{B}_{\w{T}}+\f{3D_{\Phi}^2L_f^2b(n+4)}{\w{T}}\leq\f{3\epsilon}{4}+\f{\epsilon}{2112}<\f{7\epsilon}{8}.\label{e1}
\ee
On the other hand, by setting $\lambda=2(S+1)/\Lambda$ and using the selection rule of $\t(\eee,\Lambda)$, we have
\be
\f{32(n+4)\lambda}{\t}\bigg[\f{D_{\Phi}^2L_f^2b}{\w{T}}+2M^2+\sigma^2\bigg]\leq\f{\epsilon}{99\times8^2}+\f{\epsilon}{16}<\f{\epsilon}{8}.\label{e2}
\ee
By combining \eqref{e1} and \eqref{e2}, and noting the selection rule of $S(\Lambda)$, we obtain
\bea
\nono &&Prob\big\{\|\g_f(\bar{x}_{*})\|^2\geq\eee\big\}\leq Prob\bigg\{\|\g_f(\bar{x}_{*})\|^2\geq16b\w{L}\textbf{B}_{\w{T}(\eee)}+\f{3D_{\Phi}^2L_f^2b(n+4)}{\w{T}(\eee)}  \\
\nono&&\ \ \ \ \ \ \ \ \ \ \ \ \ \ \ \ \ \ \ \ \ \ \ \ \ \ \ \ \ \ \ \ \ +\f{32(n+4)\lambda}{\t(\eee,\Lambda)}\bigg[2M^2+\sigma^2+\f{D_{\Phi}^2L_f^2b}{\w{T}(\eee)}\bigg] \bigg\} \\
\nono&&\ \ \ \ \ \ \ \ \ \ \ \ \ \ \ \ \ \ \ \ \ \ \ \ \ \ \ \ \ \ \ \ \ \leq \f{S+1}{\lambda}+2^{-S}\leq \f{\Lambda}{2}+\f{\Lambda}{2}=\Lambda,
\eea
which finishes the proof.
\end{proof}

In view of Corollary \ref{compe}, the complexity bound in \eqref{comp} of 2-ZS-BMD algorithm for finding an $(\eee,\Lambda)$-solution of problem \eqref{pro2} for generalized gradient, can be bounded by
\be
\o\Bigg\{\f{bn(L_f+\hat{L})^2D_{\Phi}^2}{\eee}\log_2\bigg(\f{1}{\Lambda}\bigg)+\f{b^2n(M^2+\sigma^2)}{\eee^2}\bigg(\w{D}+\f{D_{\Phi}^2}{\w{D}}\bigg)^2\log_2\bigg(\f{1}{\Lambda}\bigg)+\f{(M^2+\sigma^2)n}{\Lambda\eee}\log_2^2\bigg(\f{1}{\Lambda}\bigg)\Bigg\}\label{complexity}
\ee
In \eqref{complexity}, index $b$ plays an important role in the analysis of complexity for 2-ZS-BMD method. In contrast to the existing work like \cite{zero first}, \cite{compotp} with no block decomposition and block projection technique involved, the performance of $b$ and $b^2$ in \eqref{complexity} shows the first and second order dependence of complexity on $b$. Also, a linear dependency of complexity on dimension $n$ is obtained. Appearance of $n$ comes from the fact that SZO is used instead of the stochastic first oracle that the existing stochastic BCD methods considered. Meanwhile, this is the first complexity result of block coordinate type method for searching for $(\eee,\Lambda)$-solution of nonconvex composite optimization problem via a two-phase procedure.
\section{Constrained composite optimization: ZS-BCCG}
In this section, we develop ZS-BCCG method to solve constrained nonconvex (composite) optimization problem. In each iteration of the proposed algorithm, only one random block implements a zeroth-order stochastic conditional gradient descent procedure. Thus, the proposed algorithm replaces the projection over the whole space $X$ of projected gradient descent type algorithm by a zeroth-order linear programming over only one block $X_s$. From the computational perspective, it significantly reduces the projection cost by transforming the problem to a linear subproblem, which is often easy to solve. Meanwhile, direct information on gradient of the objective function is also not needed.

\subsection{For nonconvex smooth objective function}

\begin{framed}
\noindent\textbf{ZS-BCCG method:}

\noindent\textbf{Input}: Inital point $z_1\in X_1\times X_2\times\cdots\times X_b$, smoothing parameter $\mu$, iteration limit $T$, stepsizes $\aaa_k$, $k\geq1$, probability mass function $P_R(\cdot)$ supported on $\{1,2,...,T\}$, probabilities $p_s\in [0,1], s=1,2,...,b$, s.t. $\sum_{s=1}^bp_s=1$.  batch sizes $T_k$ with $T_k>0$, $k\geq1$, probability mass function $P_R(\cdot)$ supported on $\{1,2,...,T\}$, probabilities $p_s\in [0,1], s=1,2,...,b$, s.t. $\sum_{s=1}^bp_s=1$.

\noindent\textbf{Step} 0: Generate a random variable $i_k$ according to \eqref{distributep}
and let $R$ be a random variable with probability mass function $P_R$.

\noindent\textbf{Step} k=1,2,...,R-1: Generate $u_k=[u_{k,1},...,u_{k,T_k}]$, in which $u_{k,j}\sim N(0,I_n)$ and call the stochastic oracle to compute the $i_k$th block average stochastic gradient $\bar{G}_{\mu,k}^{(i_k)}$ by
\begin{equation}
\bar{G}_{\mu,k}^{(i_k)}=\f{1}{T_k}\sum_{t=1}^{T_k}U_{i_k}^TG_{\mu}(z_k,\xi_{k,t},u_{k,t}). \label{minig}
\end{equation}

\noindent \underline{Update $z_{k}$} by: If $s\neq i_k$, set $z_{k+1}^{(s)}=z_k^{(s)}$; else run
\bea
x_{k+1}^{(i_k)}&=& \arg\min_{y\in X_{i_k}}\nn\bar{G}_{\mu,k}^{(i_k)},y\mm \label{con1}\\
z_{k+1}^{(i_k)}&=&(1-\aaa_k)z_k^{(i_k)}+\aaa_k x_{k+1}^{(i_k)}. \label{con2}
\eea

\noindent\textbf{Output} $z_R$.
\end{framed}
We need several assumptions for part of results in this session.

\begin{ass}\label{ass4}
Each block $X_s$ is bounded and there exists $D_{X_s}$ such that $\max_{x,y\in X_s}\|x-y\|\leq D_{X_s}$ holds for $s=1,2,...,b$.
\end{ass}
We extend the well-known Frank-Wolfe gap(FW-gap) given by
\be
\nono g_X^k=g_X(z_k)=\nn\nabla f(z_k),z_k-\w{x}_{k+1}\mm, \ \w{x}_{k+1}=\arg\min_{y\in X}\nn\nabla f(z_k),y\mm,
\ee
to block coordinate setting. Define the $s$-block FW-gap by
\be
\nono g_s^k=g_s(z_k)=\nn\nabla_s f(z_k),z_k^{(s)}-\hat{x}_{k+1}^{(s)}\mm, \ \hat{x}_{k+1}^{(s)}=\arg\min_{y\in X_s}\nn\nabla_s f(z_k),y\mm, \ s=1,2,...,b.
\ee

\begin{thm}\label{conthm1}
Under Assumptions \ref{ass1}, \ref{ass2}, \ref{ass3}, \ref{ass4}. Let $\{z_k\}_{k\geq1}$ be generated by ZS-BCCG. The probability mass function $P_R$ is chosen such that $P_R(k)=Prob\{R=k\}=\aaa_k/\sum_{k=1}^T\aaa_k$. Then, for problem \eqref{pro1}, we have
\be
\mbb E[g_X^R]\leq\f{f(z_1)-f^{*}+(\sum_{s=1}^bp_sL_sD_{X_s})\sum_{k=1}^T\aaa_k^2+\max_{s\in\b}\{p_s/L_s\}\sum_{k=1}^T\bigg[\f{\w{\sigma}^2}{T_k}+\f{\mu^2}{4}L_f^2(n+3)^3\bigg]}{(\min_sp_s)\sum_{k=1}^T\aaa_k},
\ee
in which $\w{\sigma}^2$ is as in \eqref{sigma}.
\end{thm}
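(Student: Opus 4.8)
The strategy is to run the standard Frank–Wolfe descent estimate on the smoothed function $f_\mu$, restricted at each iteration to the randomly selected block $i_k$, and then take expectations over the block index and the sampling noise. First I would invoke the block $L_{i_k}$-Lipschitz property of $\nabla f_\mu$ (which follows from Lemma \ref{Nes Lem}(a) together with Assumption \ref{ass2}) to write, using the update \eqref{con1}--\eqref{con2} and $\|x_{k+1}^{(i_k)}-z_k^{(i_k)}\|_{i_k}\le D_{X_{i_k}}$,
\[
f_\mu(z_{k+1})\le f_\mu(z_k)+\aaa_k\nn \bar G_{\mu,k}^{(i_k)}, x_{k+1}^{(i_k)}-z_k^{(i_k)}\mm+\aaa_k\nn \nabla_{i_k}f_\mu(z_k)-\bar G_{\mu,k}^{(i_k)}, x_{k+1}^{(i_k)}-z_k^{(i_k)}\mm+\f{L_{i_k}}{2}\aaa_k^2 D_{X_{i_k}}^2 .
\]
Since $x_{k+1}^{(i_k)}$ minimizes $\nn \bar G_{\mu,k}^{(i_k)},\cdot\mm$ over $X_{i_k}$, the first inner-product term is bounded above by $\aaa_k\nn \bar G_{\mu,k}^{(i_k)}, \hat x_{k+1}^{(i_k)}-z_k^{(i_k)}\mm$, which after adding and subtracting $\nabla_{i_k}f_\mu(z_k)$ and then $\nabla_{i_k}f(z_k)$ produces $-\aaa_k g_{i_k}^k$ plus two more error terms controlled by $\|\bar G_{\mu,k}^{(i_k)}-\nabla_{i_k}f_\mu(z_k)\|_{i_k}$ and $\|\nabla_{i_k}f_\mu(z_k)-\nabla_{i_k}f(z_k)\|_{i_k}$, each paired with a factor $D_{X_{i_k}}$.

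Next I would sum from $k=1$ to $T$, use $f_\mu(z_{T+1})\ge f_\mu^*$ and the estimate $|f_\mu(x)-f(x)|\le \tfrac{\mu^2}{2}L_f n$ from Lemma \ref{Nes Lem}(b) to pass back from $f_\mu$ to $f$, obtaining a bound on $\sum_k \aaa_k g_{i_k}^k$ in terms of $f(z_1)-f^*$, the quadratic stepsize sum $\sum_k \aaa_k^2 L_{i_k}D_{X_{i_k}}^2$, and cross terms involving the gradient errors. Then I take expectation over the history $\zeta_{[T]}=(i_{[T]},\tau_{[T]})$. The key conditional-expectation computations are: (i) $\EE_{i_k}[L_{i_k}D_{X_{i_k}}^2]=\sum_s p_s L_s D_{X_s}$; (ii) the martingale term $\EE_{\zeta_{[k-1]}}[\nn \nabla_{i_k}f_\mu(z_k)-\bar G_{\mu,k}^{(i_k)}, \hat x_{k+1}^{(i_k)}-z_k^{(i_k)}\mm]$ — here one must be careful that $\hat x_{k+1}^{(i_k)}$ depends only on $\nabla_{i_k}f(z_k)$ and hence is measurable w.r.t. $\zeta_{[k-1]}$ given $i_k$, so after using Cauchy–Schwarz and Young's inequality $ab\le \tfrac{L_s}{2}a^2/?+\cdots$ the term splits into $\tfrac{\text{const}}{L_{i_k}}\|\bar G_{\mu,k}^{(i_k)}-\nabla_{i_k}f_\mu(z_k)\|^2_{i_k}$ plus something absorbed; (iii) $\EE_{\zeta_{[k-1]}}[\tfrac{1}{L_{i_k}}\|\bar G_{\mu,k}^{(i_k)}-\nabla_{i_k}f_\mu(z_k)\|_{i_k}^2]\le \max_{s}\{p_s/L_s\}\,\widetilde\sigma^2/T_k$, using the batch-averaging variance reduction exactly as in \eqref{tui1}--\eqref{fuzhu2}; and (iv) the deterministic bias term $\tfrac{1}{L_{i_k}}\|\nabla_{i_k}f_\mu(z_k)-\nabla_{i_k}f(z_k)\|^2_{i_k}\le \max_s\{p_s/L_s\}\tfrac{\mu^2}{4}L_f^2(n+3)^3$ via \eqref{fmu-f}. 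Finally, the left side $\EE[\sum_k \aaa_k g_{i_k}^k]$ is lower bounded by $(\min_s p_s)\sum_k \aaa_k\,\EE[g_X^k]$ because $\EE_{i_k}[g_{i_k}^k]=\sum_s p_s g_s^k \ge (\min_s p_s)\sum_s g_s^k\ge(\min_s p_s) g_X^k$ (the block gaps sum to at least the full gap, since $X$ is a product and the linear functional separates over blocks). Dividing by $(\min_s p_s)\sum_k\aaa_k$ and recognizing the definition of $P_R$ gives the claimed bound.

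The main obstacle I anticipate is the careful bookkeeping in step (ii): arranging the Cauchy–Schwarz/Young split so that the coefficient of the variance term comes out as $\max_s\{p_s/L_s\}$ rather than something involving $D_{X_s}$ directly, and ensuring the $\aaa_k^2$ geometry term has exactly the coefficient $\sum_s p_s L_s D_{X_s}$. This requires choosing the Young's-inequality weight to be $\aaa_k L_{i_k}$ (or a fixed multiple), so that $\aaa_k D_{X_{i_k}}\|\text{err}\|\le \tfrac{\aaa_k^2 L_{i_k}}{2}D_{X_{i_k}}^2 + \tfrac{1}{2L_{i_k}}\|\text{err}\|^2$, after which the first piece merges with the existing $\aaa_k^2$ term and the second is handled by the variance bound — the arithmetic is routine but error-prone. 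A secondary point worth checking is that the martingale term genuinely has zero conditional mean: this needs $\hat x_{k+1}^{(i_k)}$ to be determined by $z_k$ (true, since it solves a deterministic LP in $\nabla_{i_k}f(z_k)$) while $\bar G_{\mu,k}^{(i_k)}-\nabla_{i_k}f_\mu(z_k)$ averages to zero given $\zeta_{[k-1]}$ by Lemma \ref{Nes Lem}(a), exactly as in \eqref{fuzhu1}.
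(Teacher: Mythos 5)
Your plan is essentially the paper's proof: block descent lemma at the selected coordinate, the LP optimality of $x_{k+1}^{(i_k)}$ to swap in $\hat x_{k+1}^{(i_k)}$ and produce $-\aaa_k g_{i_k}^k$, a Young split with weight $L_{i_k}\aaa_k$ so the geometry term carries $\aaa_k^2 L_{i_k}D_{X_{i_k}}$-type coefficients and the error carries $\tfrac{1}{2L_{i_k}}\|\cdot\|_{i_k}^2$, the batch-variance bound as in \eqref{tui1}--\eqref{fuzhu2} plus the smoothing bias via \eqref{fmu-f}, and the conditional expectations over $i_k$ giving $\sum_s p_sL_sD_{X_s}$, $\max_s\{p_s/L_s\}$ and $\mbb E[g_{i_k}^k]\ge(\min_s p_s)g_X^k$. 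The one substantive deviation is that you run the descent lemma on $f_\mu$ and convert back via $|f_\mu-f|\le\tfrac{\mu^2}{2}L_fn$; the paper instead applies the block Lipschitz property of $f$ itself (available from Assumption \ref{ass2}), sets $\bar\Delta_k=\bar G_{\mu,k}^{(i_k)}-\nabla_{i_k}f(z_k)$, and pushes the entire smoothing bias into the $\|\bar\Delta_k\|_{i_k}^2$ term, which is what produces exactly the $\tfrac{\mu^2}{4}L_f^2(n+3)^3$ contribution. Your route therefore picks up an extra additive $\mu^2L_fn$ from $f_\mu(z_1)-f_\mu^{*}$ versus $f(z_1)-f^{*}$ that the stated bound does not contain; it is harmless in order of magnitude but does not reproduce the inequality as written, and the fix is simply to telescope on $f$ rather than $f_\mu$. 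One smaller point: the paper never invokes a zero-mean argument for the error inner product; it merges the two error terms into $\aaa_k\nn\bar\Delta_k,\hat x_{k+1}^{(i_k)}-x_{k+1}^{(i_k)}\mm$ and applies Cauchy--Young. Your observation that the piece paired with $\hat x_{k+1}^{(i_k)}$ (measurable, since it solves a deterministic LP in $\nabla_{i_k}f(z_k)$) has zero conditional mean is valid and even cleaner, but keep the piece paired with $x_{k+1}^{(i_k)}$ (which depends on $\bar G_{\mu,k}^{(i_k)}$) on the Young route, as you anticipate.
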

\begin{proof}
Set $\bar{\Delta}_k=\bar{G}_{\mu,k}^{(i_k)}-\nabla_{i_k}f(z_k)$, use the block Lipschitz property of $f$, we have
\bea
\nono&&f(z_{k+1})\leq f(z_k)+\nn\nabla_{i_k}f(z_k),z_{k+1}^{(i_k)}-z_k^{(i_k)}\mm+\f{L_{i_k}}{2}\|z_{k+1}^{(i_k)}-z_k^{(i_k)}\|_{i_k}^2  \\
\nono&&=f(z_k)+\aaa_k\nn\bar{G}_{\mu,k}^{(i_k)}-\bar{\Delta}_k,x_{k+1}^{(i_k)}-z_k^{(i_k)}\mm+\f{L_{i_k}}{2}\aaa_k^2\|x_{k+1}^{(i_k)}-z_k^{(i_k)}\|_{i_k}^2\\
\nono&&\leq f(z_k)+\aaa_k\nn\bar{G}_{\mu,k}^{(i_k)},\hat{x}_{k+1}^{(i_k)}-z_k^{(i_k)}\mm-\aaa_k\nn\bar{\Delta}_k,x_{k+1}^{(i_k)}-z_k^{(i_k)}\mm+\f{L_{i_k}}{2}\aaa_k^2\|x_{k+1}^{(i_k)}-z_k^{(i_k)}\|_{i_k}^2\\
\nono&&=f(z_k)-\aaa_kg_{i_k}^k+\aaa_k\nn\bar{\Delta}_k,\hat{x}_{k+1}^{(i_k)}-z_k^{(i_k)}\mm-\aaa_k\nn\bar{\Delta}_k,x_{k+1}^{(i_k)}-z_k^{(i_k)}\mm+\f{L_{i_k}}{2}\aaa_k^2\|x_{k+1}^{(i_k)}-z_k^{(i_k)}\|_{i_k}^2 \\
\nono&&=f(z_k)-\aaa_kg_{i_k}^k+\aaa_k\nn\bar{\Delta}_k,\hat{x}_{k+1}^{(i_k)}-x_{k+1}^{(i_k)}\mm+\f{L_{i_k}}{2}\aaa_k^2\|x_{k+1}^{(i_k)}-z_k^{(i_k)}\|_{i_k}^2\\
\nono&&\leq f(z_k)-\aaa_kg_{i_k}^k+\f{L_{i_k}}{2}\aaa_k^2\big[\|\hat{x}_{k+1}^{(i_k)}-x_{k+1}^{(i_k)}\|_{i_k}^2+\|x_{k+1}^{(i_k)}-z_k^{(i_k)}\|_{i_k}^2\big]+\f{1}{2L_{i_k}}\|\bar{\Delta}_k\|_{i_k}^2\\
\nono&&\leq f(z_k)-\aaa_kg_{i_k}^k+L_{i_k}D_{X_{i_k}}^2\aaa_k^2+\f{1}{2L_{i_k}}\|\bar{\Delta}_k\|_{i_k}^2.
\eea
In which the first inequality follows from \eqref{con1}, second equality follows from the definition of $g_{i_k}^k$, second inequality follows from Cauchy inequality. Sum both sides of the above inequality from $k=1$ to $k=T$, it follows that
\be
f(z_{T+1})\leq f(z_1)-\sum_{k=1}^T\aaa_kg_{i_k}^k+\sum_{k=1}^TL_{i_k}D_{X_{i_k}}\aaa_k^2+\f{1}{2}\sum_{k=1}^T\f{1}{L_{i_k}}\|\bar{\Delta}_k\|_{i_k}^2. \label{bothsides}
\ee
Denote $\tau_k=(\xi_k,u_k)$, note that,  by using the similar estimate with \eqref{fuzhu1}-\eqref{fuzhu2},
\bea
\nono&&\mbb E_{i_{[k-1]},\tau_{[k-1]}}[\f{1}{L_{i_k}}\|\bar{\Delta}_k\|_{i_k}^2]\leq2\mbb E_{i_{[k-1]},\tau_{[k-1]}}[\f{1}{L_{i_k}}\|\bar{G}_{\mu,k}^{(i_k)}-\nabla_{i_k}f_{\mu}(z_k)\|_{i_k}^2]\\
\nono&&\ \ \ \ \ \ \ \ \ \ \ \ \ \ \ \ \ \ \ \ \ \ \ \ \ \ \ \  \ \ \ \ \ \ +2\mbb E_{i_{[k-1]},\tau_{[k-1]}}[\f{1}{L_{i_k}}\|\nabla_{i_k}f_{\mu}(z_k)-\nabla_{i_k}f(z_k)\|_{i_k}^2] \\
&&\ \ \ \ \ \ \ \ \ \ \ \ \ \ \ \ \ \ \ \ \ \ \ \ \ \ \ \  \ \ \ \ \leq \max_{s\in\b}\{\f{p_s}{L_s}\}\big[\f{2\w{\sigma}^2}{T_k}+\f{\mu^2}{2}L_f^2(n+3)^3\big]. \label{sii}
\eea
Also note that
\bea
\nono&&\mbb E_{i_{[k-1]},\tau_{[k-1]}}[-g_{i_k}^k]=\sum_{s=1}^bp_s\mbb E_{\tau_{[k-1]}}[\nn\nabla_sf(z_k),\hat{x}_{k+1}^{(s)}-z_k^{(s)}\mm]\\
\nono&&\leq \min_{s\in\b}p_s\sum_{s=1}^b\mbb E_{\tau_{[k-1]}}[\nn\nabla_sf(z_k),\w{x}_{k+1}^{(s)}-z_k^{(s)}\mm]=-\min_{s\in\b}p_s[g_X^k],
\eea
and
\be
\nono\mbb E_{i_{[k-1]},\tau_{[k-1]}}[L_{i_k}D_{X_{i_k}}]=\sum_{s=1}^bp_sL_sD_{X_s}.
\ee
Taking expectation on $i_{[T]}$, $\tau_{[T]}$ on both sides of \eqref{bothsides}, and rearranging terms, we obtain
\be
\nono\min_{s\in\b}\{p_s\}\sum_{k=1}^T\aaa_k\mbb [g_X^k]\leq f(z_1)-f^{*}+(\sum_{s=1}^bp_sL_sD_{X_s})\sum_{k=1}^T\aaa_k^2+\max_{s\in\b}\{\f{p_s}{L_s}\}[\f{\w{\sigma}^2}{T_k}+\f{\mu^2}{4}L_f^2(n+4)^3].
\ee
The final result holds after noting that $P_R(k)=\aaa_k/\sum_{k=1}^T\aaa_k$, $k=1,2,...,T$.
\end{proof}
\noi We will show that after the stepsizes $\{\aaa_k\}$, smoothing parameter $\mu$, and batch sizes $T_k$ are chosen in some special way, the ZS-BCCG can achieve an $\o(b^4n/\eee^4)$ complexity bound. This result will be stated in a general way in following composite setting.

\subsection{For composite objective function}
\begin{framed}
\noi\textbf{ZS-BCCG}$'$:

\noi Replace the updating procedure in ZS-BCCG by

\noi \underline{Update $z_{k}$} by: If $s\neq i_k$, set $z_{k+1}^{(s)}=z_k^{(s)}$; else run
\bea
x_{k+1}^{(i_k)}&=& \arg\min_{y\in X_{i_k}}\big\{\nn\bar{G}_{\mu,k}^{(i_k)},y\mm +\chi_{i_k}(y)\big\}  \label{conditionc1}\\
z_{k+1}^{(i_k)}&=&(1-\aaa_k)z_k^{(i_k)}+\aaa_k x_{k+1}^{(i_k)}.\label{conditionc2}
\eea

\noi\textbf{Output} $x_R$.
\end{framed}
We define the following generalized FW-gap for solving composite problem \eqref{pro2},
\be
\nono \bar{g}_X^k=\bar{g}(\ttt(z_k))=\nn\nabla f(z_k),z_k-\ttt(z_k)\mm+\chi(z_k)-\chi(\ttt(z_k)), \ \text{where} \ \ttt(z_k)=\arg\min_{y\in X}\big\{\nn\nabla f(z_k),y\mm+\chi(y)\big\}.
\ee
For later simplicity, for $s=1,2,...,b$, we define generalized block FW-gap as follow,
\be
\nono \bar{g}_s^k=\bar{g}(\ttt_s(z_k))=\nn\nabla_s f(z_k),z_k^{(s)}-\ttt_s(z_k)\mm+\chi_s(U_s^Tz_k)-\chi_s(\ttt_s(z_k)), \ \text{where} \ \ttt_s(z_k)=\arg\min_{y\in X_s}\big\{\nn\nabla_s f(z_k),y\mm+\chi_s(y)\big\}.
\ee
\blm\label{lemg}
Let $\bar{g}_X^k$, $\bar{g}_s^k$, $s=1,2,...,b$ be defined as above, then we have
\be
\nono\bar{g}_X^k\leq\sum_{s=1}^b\bar{g}_s^k.
\ee
\elm
\begin{proof}
Observe that
\bea
\nono&&\sum_{s=1}^b\bar{g}_s^k=\nn\nabla f(z_k),z_k\mm+\chi(z_k)-\sum_{s=1}^b\big[\nn\nabla_sf(z_k),\ttt_s(z_k)\mm+\chi_s(\ttt_s(z_k))\big]\\
\nono&&\ \ \ \ \ \ \ \ \ \geq\nn\nabla f(z_k),z_k\mm+\chi(z_k)-\sum_{s=1}^b\big[\nn\nabla_sf(z_k),U_s^T\ttt(z_k)\mm+\chi_s(U_s^T\ttt(z_k))\big] \\
\nono&&\ \ \ \ \ \ \ \ \ =\bar{g}_X^k,
\eea
the proof is concluded.
\end{proof}
\begin{thm}\label{conthm2}
Under Assumptions \ref{ass1}, \ref{ass2}, \ref{ass3}, \ref{ass4}. Let $\{z_k\}_{k\geq1}$ be generated by ZS-BCCG$'$. The probability mass function $P_R$ is chosen such that $P_R(k)=Prob\{R=k\}=\aaa_k/\sum_{k=1}^T\aaa_k$. Then, for composite problem \eqref{pro2}, we have
\be
\mbb E[\bar{g}_X^R]\leq\f{\Phi(z_1)-\Phi^{*}+(\sum_{s=1}^bp_sL_sD_{X_s})\sum_{k=1}^T\aaa_k^2+\max_{s\in\b}\{p_s/L_s\}\sum_{k=1}^T\bigg[\f{\w{\sigma}^2}{T_k}+\f{\mu^2}{4}L_f^2(n+3)^3\bigg]}{(\min_sp_s)\sum_{k=1}^T\aaa_k}\label{combccg}
\ee
in which $\w{\sigma}^2$ is as in \eqref{sigma}, and $\Phi(x)=f(x)+\chi(x)$.
\end{thm}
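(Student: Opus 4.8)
The plan is to follow the proof of Theorem~\ref{conthm1} step by step, replacing the smooth block FW-gap $g_{i_k}^k$ by the generalized block FW-gap $\bar g_{i_k}^k$ and carrying the regularizer through by using that $\chi$ is block separable and each $\chi_s$ is convex. First I would apply the block $L_{i_k}$-Lipschitz property of $f$ at the $i_k$th coordinate of the step \eqref{conditionc2}; since $z_{k+1}^{(i_k)}-z_k^{(i_k)}=\aaa_k(x_{k+1}^{(i_k)}-z_k^{(i_k)})$ this gives
\be
\nono f(z_{k+1})\le f(z_k)+\aaa_k\nn\nabla_{i_k}f(z_k),x_{k+1}^{(i_k)}-z_k^{(i_k)}\mm+\f{L_{i_k}}{2}\aaa_k^2\|x_{k+1}^{(i_k)}-z_k^{(i_k)}\|_{i_k}^2.
\ee
Since $z_k\in X$ and $\chi_{i_k}$ is convex, $\chi_{i_k}(z_{k+1}^{(i_k)})\le(1-\aaa_k)\chi_{i_k}(z_k^{(i_k)})+\aaa_k\chi_{i_k}(x_{k+1}^{(i_k)})$, and block separability gives $\chi(z_{k+1})-\chi(z_k)=\chi_{i_k}(U_{i_k}^Tz_{k+1})-\chi_{i_k}(U_{i_k}^Tz_k)$. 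Adding these to the inequality above and writing $\bar\Delta_k=\bar G_{\mu,k}^{(i_k)}-\nabla_{i_k}f(z_k)$, the deterministic part of the resulting descent inequality for $\Phi=f+\chi$ is $\aaa_k$ times the bracket $\nn\nabla_{i_k}f(z_k),x_{k+1}^{(i_k)}-z_k^{(i_k)}\mm+\chi_{i_k}(x_{k+1}^{(i_k)})-\chi_{i_k}(z_k^{(i_k)})$.

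The key step is to bound this bracket. Using the optimality of $x_{k+1}^{(i_k)}$ for the subproblem \eqref{conditionc1} tested against the block proximal point $\ttt_{i_k}(z_k)$, I may replace $x_{k+1}^{(i_k)}$ by $\ttt_{i_k}(z_k)$ inside the bracket at the cost of an error term $\nn\bar\Delta_k,\ttt_{i_k}(z_k)-x_{k+1}^{(i_k)}\mm$; by the definition of the generalized block FW-gap the remaining deterministic part is then exactly $-\bar g_{i_k}^k$. Applying Young's inequality to the $\bar\Delta_k$ error term with weight $L_{i_k}\aaa_k$ and invoking Assumption~\ref{ass4} to bound $\|\ttt_{i_k}(z_k)-x_{k+1}^{(i_k)}\|_{i_k}$ and $\|x_{k+1}^{(i_k)}-z_k^{(i_k)}\|_{i_k}$ by $D_{X_{i_k}}$, I obtain the recursion
\be
\nono\Phi(z_{k+1})\le\Phi(z_k)-\aaa_k\bar g_{i_k}^k+L_{i_k}D_{X_{i_k}}^2\aaa_k^2+\f{1}{2L_{i_k}}\|\bar\Delta_k\|_{i_k}^2,
\ee
the composite analogue of the central estimate in the proof of Theorem~\ref{conthm1}. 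Summing over $k=1,\dots,T$ and using $\Phi(z_{T+1})\ge\Phi^{*}$ yields $\sum_k\aaa_k\bar g_{i_k}^k\le\Phi(z_1)-\Phi^{*}+\sum_kL_{i_k}D_{X_{i_k}}^2\aaa_k^2+\f{1}{2}\sum_kL_{i_k}^{-1}\|\bar\Delta_k\|_{i_k}^2$.

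Finally I take the expectation over $i_{[T]},\tau_{[T]}$. The noise term is handled exactly as in the derivation of \eqref{sii} — split $\bar\Delta_k$ into $\bar G_{\mu,k}^{(i_k)}-\nabla_{i_k}f_\mu(z_k)$ plus $\nabla_{i_k}f_\mu(z_k)-\nabla_{i_k}f(z_k)$, use the averaged-estimator variance bound \eqref{fuzhu2} (with $\w\sigma^2$ as in \eqref{sigma}) together with \eqref{fmu-f} — giving $\mbb E_{i_{[k-1]},\tau_{[k-1]}}[L_{i_k}^{-1}\|\bar\Delta_k\|_{i_k}^2]\le\max_{s\in\b}\{p_s/L_s\}[2\w\sigma^2/T_k+\f{\mu^2}{2}L_f^2(n+3)^3]$, while $\mbb E_{i_{[k-1]},\tau_{[k-1]}}[L_{i_k}D_{X_{i_k}}^2]=\sum_sp_sL_sD_{X_s}^2$. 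The only genuinely new ingredient is the gap term: each $\bar g_s^k\ge0$ (immediate from the optimality defining $\ttt_s(z_k)$, tested at $z_k^{(s)}$) and $\bar g_X^k\le\sum_{s=1}^b\bar g_s^k$ by Lemma~\ref{lemg}, hence $\mbb E_{i_{[k-1]},\tau_{[k-1]}}[-\bar g_{i_k}^k]=-\sum_sp_s\mbb E_{\tau_{[k-1]}}[\bar g_s^k]\le-(\min_sp_s)\mbb E_{\tau_{[k-1]}}[\bar g_X^k]$. Substituting the three estimates, dividing by $(\min_sp_s)\sum_k\aaa_k$, and recalling $P_R(k)=\aaa_k/\sum_k\aaa_k$ so that $\mbb E[\bar g_X^R]=\sum_k\aaa_k\mbb E[\bar g_X^k]/\sum_k\aaa_k$, produces \eqref{combccg}. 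I expect the only delicate point to be the passage from the block gaps $\bar g_s^k$ to $\bar g_X^k$ through Lemma~\ref{lemg}, together with the observation that nonnegativity of every $\bar g_s^k$ is what legitimizes replacing the $p_s$-weighted average by the factor $\min_sp_s$; the rest is a routine transcription of the smooth case.
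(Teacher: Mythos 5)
Your proposal is correct and follows essentially the same route as the paper: the composite descent recursion via the optimality of the linear subproblem \eqref{conditionc1} tested at $\ttt_{i_k}(z_k)$, Young's inequality with the diameter bound from Assumption \ref{ass4}, the variance estimate as in \eqref{sii}, and the passage from block gaps to $\bar g_X^k$ via Lemma \ref{lemg} together with the distribution of $R$. Your explicit remark that nonnegativity of each $\bar g_s^k$ is what justifies replacing the $p_s$-weighted average by $\min_s p_s$ is a point the paper uses only implicitly, and it is a welcome clarification rather than a deviation.
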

\begin{proof}
Use the optimality condition of \eqref{conditionc1} and the convexity of $\chi_{i_k}$, it follows that, for any $u\in X_{i_k}$, there exists $h_{i_k}\in\partial\chi_{i_k}(U_{i_k}^Tx_{k+1})$ such that
\be
\nn\nabla_{i_k}f(z_k)+\bar{\Delta}_k+h_{i_k},u-x_{k+1}^{(i_k)}\mm\geq0 \ \text{and} \ \nono \chi_{i_k}(u)-\chi_{i_k}(U_{i_k}^Tx_{k+1})-\nn h_{i_k},u-x_{k+1}^{(i_k)}\mm\geq0
\ee
holds. By setting $u=\ttt_{i_k}(z_k)$ and combining above inequalities, we have
\be
\nn\nabla_{i_k}f(z_k),x_{k+1}^{(i_k)}-\ttt_{i_k}(z_k)\mm+\chi_{i_k}(U_{i_k}^Tx_{k+1})-\chi_{i_k}(\ttt_{i_k}(z_k))\leq\nn\bar{\Delta}_k,\ttt_{i_k}(z_k)-x_{k+1}^{(i_k)}\mm. \label{optine}
\ee
\bea
\nono&&f(z_{k+1})\leq f(z_k)+\nn\nabla_{i_k}f(z_k),z_{k+1}-z_k\mm+\f{L_{i_k}}{2}\|z_{k+1}^{(i_k)}-z_k^{(i_k)}\|_{i_k}^2  \\
&&=f(z_k)-\aaa_k\nn\nabla_{i_k}f(z_k),z_k^{(i_k)}-x_{k+1}^{(i_k)}\mm+\f{L_{i_k}}{2}\aaa_k^2\|z_k^{(i_k)}-x_{k+1}^{(i_k)}\|_{i_k}^2 \label{plus1}
\eea
Convexity of $\chi_{i_k}$ and \eqref{conditionc2} implies
\be
\chi_{i_k}(z_{k+1}^{(i_k)})\leq(1-\aaa_k)\chi_{i_k}(U_{i_k}^Tz_k)+\aaa_k\chi_{i_k}(U_{i_k}^Tx_{k+1}). \label{plus2}
\ee
Combining \eqref{plus1}, \eqref{plus2} and noting that $\chi(z_{k+1})-\chi(z_k)=\chi_{i_k}(U_{i_k}^Tz_{k+1})-\chi_{i_k}(U_{i_k}^Tz_{k})$, we have
\bea
\nono&&\Phi(z_{k+1})\\
\nono&&\leq\Phi(z_k)-\aaa_k\nn\nabla_{i_k}f(z_k),z_k^{(i_k)}-x_{k+1}^{(i_k)}\mm-\aaa_k\chi_{i_k}(U_{i_k}^Tz_k)+\aaa_k\chi_{i_k}(U_{i_k}^Tx_{k+1})+\f{L_{i_k}}{2}\aaa_k^2\|z_k^{(i_k)}-x_{k+1}^{(i_k)}\|_{i_k}^2 \\
\nono&&\leq\Phi(z_k)-\aaa_k\bar{g}_{i_k}^k+\aaa_k\big[\nn\nabla_{i_k}f(z_k),x_{k+1}^{(i_k)}-\ttt_{i_k}(z_k)\mm+\chi_{i_k}(U_{i_k}^Tx_{k+1})-\chi_{i_k}(\ttt_{i_k}(z_k))\big]+\f{L_{i_k}}{2}\aaa_k^2\|z_k^{(i_k)}-x_{k+1}^{(i_k)}\|_{i_k}^2\\
\nono&&\leq \Phi(z_k)-\aaa_k\bar{g}_{i_k}^k+\aaa_k\nn\bar{\Delta}_k,\ttt_{i_k}(z_k)-x_{k+1}^{(i_k)}\mm+\f{L_{i_k}}{2}\aaa_k^2\|z_k^{(i_k)}-x_{k+1}^{(i_k)}\|_{i_k}^2\\
\nono&&\leq\Phi(z_k)-\aaa_k\bar{g}_{i_k}^k+\f{L_{i_k}}{2}\aaa_k^2\big[\|z_k^{(i_k)}-x_{k+1}^{(i_k)}\|_{i_k}^2+\|\ttt_{i_k}(z_k)-x_{k+1}^{(i_k)}\|_{i_k}^2\big]\\
\nono&&\leq\Phi(z_k)-\aaa_k\bar{g}_{i_k}^k+L_{i_k}D_{X_{i_k}}^2\aaa_k^2+\f{1}{2L_{i_k}}\|\bar{\Delta}_k\|_{i_k}^2.
\eea
Note that by Lemma \ref{lemg},
\be
\mbb E_{i_{[k-1]}\tau_{[k-1]}}[\bar{g}_{i_k}^k]=\sum_{s=1}^bp_s\mbb E_{\tau_{[k-1]}}[\bar{g}_{s}^k]\geq\min_{s\in\b}p_s\mbb E_{\tau_{[k-1]}}[\bar{g}_X^k],
\ee
then, rest of the proof is similar with Theorem \ref{conthm1} and details are omitted.
\end{proof}
In  the rest of the paper, denote $\check{L}=\min_{s\in\b}L_s$. The iteration complexity of ZS-BCCG$'$ for composite optimization problem \eqref{pro2} is analyzed in following corollary.
\begin{cor}
Under assumptions of Theorem \ref{conthm2}, assume that the random variables $i_k$ are uniformly distributed. Let the smoothing parameter $\mu=\big[\f{2\check{L}\sqrt{2M^2+\sigma^2}}{5L_f^2(n+4)^3}\big]^{\f{1}{2}}$, the stepsize $\aaa_k=1/\sqrt{T}$, $k=1,2...,T$, batch sample sizes $T_k=\f{2(n+4)\sqrt{2M^2+\sigma^2}}{\check{L}}T$, $k=1,2,...,T$. Then, to find an $\epsilon$-stationary point of the composite problem \eqref{pro2}, the total number of calls to the zeroth-order oracle in ZS-BCCG$'$ is bounded by $\o(b^4n/\epsilon^4)$.
\end{cor}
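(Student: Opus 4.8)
The plan is to specialise the bound \eqref{combccg} of Theorem~\ref{conthm2} to the parameter choices in the statement and then invert it to count iterations and oracle calls. First I would insert the uniform law $p_s=1/b$, so that $\min_{s}p_s=1/b$, $\max_{s\in\b}\{p_s/L_s\}=1/(b\check{L})$ and $\sum_{s=1}^b p_sL_sD_{X_s}\le\hat{L}D_X$ with $D_X:=\max_{s\in\b}D_{X_s}$; note also that with $\aaa_k$ constant the distribution $P_R(k)=\aaa_k/\sum_k\aaa_k$ becomes uniform on $\{1,\dots,T\}$. Plugging $\aaa_k\equiv 1/\sqrt{T}$ gives $\sum_{k=1}^T\aaa_k^2=1$ and $\sum_{k=1}^T\aaa_k=\sqrt{T}$, so \eqref{combccg} collapses to
\be\nono
\mbb E[\bar{g}_X^R]\le\f{b}{\sqrt{T}}\bigg(\Phi(z_1)-\Phi^{*}+\hat{L}D_X+\f{1}{b\check{L}}\sum_{k=1}^T\Big[\f{\w{\sigma}^2}{T_k}+\f{\mu^2}{4}L_f^2(n+3)^3\Big]\bigg).
\ee

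Next I would show that the bracketed noise sum is bounded by a constant independent of $T$. Because $\w{\sigma}^2=4(n+4)[2M^2+\sigma^2+\mu^2L_f^2(n+4)^2]$ and the prescribed $\mu$ makes $\mu^2L_f^2(n+4)^2=\f{2\check{L}\sqrt{2M^2+\sigma^2}}{5(n+4)}$, which is dominated by $2M^2+\sigma^2$, we get $\w{\sigma}^2$ of order $(n+4)(2M^2+\sigma^2)$; then the batch size $T_k=\f{2(n+4)\sqrt{2M^2+\sigma^2}}{\check{L}}T$ yields $\sum_{k=1}^T\w{\sigma}^2/T_k=T\,\w{\sigma}^2/T_k$ of order $\check{L}\sqrt{2M^2+\sigma^2}$, a constant in $T$, and the value of $\mu$ is calibrated so that the accumulated smoothing bias $\sum_{k=1}^T\f{\mu^2}{4}L_f^2(n+3)^3$ is of the same constant order (using $(n+3)^3\le(n+4)^3$). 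Dividing that sum by $b\check{L}$, the whole bracket above is $O\big(\Phi(z_1)-\Phi^{*}+\hat{L}D_X+\sqrt{2M^2+\sigma^2}/b\big)$, hence
\be\nono
\mbb E[\bar{g}_X^R]\le\f{C(b)}{\sqrt{T}},\qquad C(b)=O\big(b\big(\Phi(z_1)-\Phi^{*}+\hat{L}D_X\big)+\sqrt{2M^2+\sigma^2}\big)=O(b).
\ee

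Finally I would invert: to reach an $\epsilon$-stationary point (measured by the generalized FW-gap, i.e.\ $\mbb E[\bar{g}_X^R]\le\epsilon$) it suffices to take $T=\lceil C(b)^2/\epsilon^2\rceil=O(b^2/\epsilon^2)$; since each of the $T$ iterations queries the SZO $T_k=\f{2(n+4)\sqrt{2M^2+\sigma^2}}{\check{L}}T$ times, the total number of calls is $\sum_{k=1}^T T_k=\f{2(n+4)\sqrt{2M^2+\sigma^2}}{\check{L}}T^2=O\big((n+4)T^2\big)=O(b^4n/\epsilon^4)$, as claimed. The main obstacle is the middle step: since $\w{\sigma}^2$ itself depends on $\mu$, the calibration must be handled with care — the batch size $T_k$ has to grow like $(n+4)T$ to damp the $O(\w{\sigma}^2/T_k)$ variance while keeping $T\cdot T_k=O((n+4)T^2)$ (which is exactly what produces the $n$ and the $b^4$ in the final count), and $\mu$ has to be taken small enough that the cumulative smoothing bias does not overwhelm the $O(1/\sqrt{T})$ leading term; once these balances are verified, everything else is a direct substitution into Theorem~\ref{conthm2}.
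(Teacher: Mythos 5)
Your overall route is the same as the paper's: specialize \eqref{combccg} to the uniform choice $p_s=1/b$ with constant stepsize $\aaa_k=1/\sqrt{T}$ and constant batch size, deduce a bound of order $b/\sqrt{T}$ for $\mbb E[\bar{g}_X^R]$, and then count the total number of oracle calls as $T\cdot T_k=O\big((n+4)T^2\big)=\o(b^4n/\eee^4)$. Your handling of the variance term $\sum_{k=1}^T\w{\sigma}^2/T_k$ (it collapses to a $T$-free constant of order $\check{L}\sqrt{2M^2+\sigma^2}$ thanks to $T_k\propto(n+4)T$) and your final inversion are exactly the computation the paper performs.

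However, the step you yourself flag as the main obstacle does not hold as you assert it. With the smoothing parameter literally prescribed in the corollary, $\mu^2=\f{2\check{L}\sqrt{2M^2+\sigma^2}}{5L_f^2(n+4)^3}$ is independent of $T$, so the accumulated bias satisfies $\sum_{k=1}^T\f{\mu^2}{4}L_f^2(n+3)^3\;\le\;T\,\f{\mu^2}{4}L_f^2(n+4)^3=\f{T\,\check{L}\sqrt{2M^2+\sigma^2}}{10}$, which is $\Theta(T)$, not the "constant order" you claim; after multiplying by $\f{b}{\sqrt{T}}\cdot\f{1}{b\check{L}}$ it contributes $\Theta(\sqrt{T})$ to the bound on $\mbb E[\bar{g}_X^R]$, which diverges rather than decays. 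For the argument (and for the paper's own final display $\mbb E[\bar{g}_X^R]\le\big(b[\Phi(z_1)-\Phi^{*}]+\sum_{s=1}^bL_sD_{X_s}+4\sqrt{2M^2+\sigma^2}\big)/\sqrt{T}$) to hold, one needs $\mu^2=O(1/T)$, e.g. $\mu^2=\f{2\check{L}\sqrt{2M^2+\sigma^2}}{5L_f^2(n+4)^3T}$; this is evidently what the paper intends, since its intermediate estimate contains the term $5\mu^2L_f^2(n+4)^3/(\check{L}\aaa_1)=5\mu^2L_f^2(n+4)^3\sqrt{T}/\check{L}$, which is $O(1/\sqrt{T})$ only under that scaling (the factor $T$ appears to have been dropped in the corollary's statement of $\mu$). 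So your calibration claim is asserted rather than verified, and it is false for the $\mu$ as written; with the corrected $T$-dependent $\mu$ your proposal coincides with the paper's proof, and the $\o(b^4n/\eee^4)$ call count follows exactly as you describe.
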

\begin{proof}
Using the fact that $i_k$ are uniformly distributed with block index $b$, we have
\be
\nono\mbb E[\bar{g}_X^R]\leq\f{b[\Phi(z_1)-\Phi^{*}]}{T\aaa_1}+\sum_{s=1}^bL_sD_{X_s}\aaa_1+\f{4(n+4)[2M^2+\sigma^2]}{\check{L}T_1\aaa_1}+\f{5\mu^2L_f^2(n+4)^3}{\check{L}\aaa_1}.
\ee
Substitute the selection rule of $\{\aaa_k\}$,  $\{T_k\}$, $\mu$ into \eqref{combccg}, it follows that
\be
\nono\mbb E[\bar{g}_X^R]\leq\f{b[\Phi(z_1)-\Phi^{*}]+\sum_{s=1}^bL_sD_{X_s}+4\sqrt{2M^2+\sigma^2}}{\sqrt{T}},
\ee
which shows the desired complexity bound.
\end{proof}

Next, inspired by the technique in \cite{sliding} developed for solving convex optimization problem, we develop an approximate ZS-BCCG method for solving nonconvex optimization composite problem \eqref{pro2}. Meanwhile, we use the method to improve the above complexity bound in a performance of generalized gradient.
\begin{framed}
\noi \textbf{Approximate ZS-BCCG}:

\noindent\textbf{Input}: Initial point $x_1\in X_1\times X_2\times\cdots\times X_b$, smoothing parameter $\mu$, iteration limit $T$, stepsizes $\aaa_k$, $k\geq1$, probability mass function $P_R(\cdot)$ supported on $\{1,2,...,T\}$, probabilities $p_s\in [0,1], s=1,2,...,b$, s.t. $\sum_{s=1}^bp_s=1$,  batch sizes $T_k$ with $T_k>0$, $k\geq1$, approximating parameters $\{\delta_k\}$, $k\geq1$.

\noindent\textbf{Step} 0: Generate a random variable $i_k$ according to \eqref{distributep}
and let $R$ be a random variable with probability mass function $P_R$.

\noindent\textbf{Step} k=1,2,...,R-1: Generate $u_k=[u_{k,1},...,u_{k,T_k}]$, in which $u_{k,j}\sim N(0,I_n)$ and call the stochastic oracle to compute the $i_k$th block average stochastic gradient $\bar{G}_{\mu,k}^{(i_k)}$ by
\begin{equation}
\bar{G}_{\mu,k}^{(i_k)}=\f{1}{T_k}\sum_{t=1}^{T_k}U_{i_k}^TG_{\mu}(x_k,\xi_{k,t},u_{k,t}).\label{conG}
\end{equation}

\noi \underline{Update $x_{k}$}: If $s\neq i_k$, set $x_{k+1}^{(s)}=x_k^{(s)}$; else, run
\be
x_{k+1}^{(i_k)}=\text{CndG}_{X_{i_k}}(x_k^{(i_k)},\bar{G}_{\mu,k}^{(i_k)},\aaa_k,\delta_k),\label{cndg}
\ee
where the operator $\text{CndG}$ represents following approximate conditional gradient procedure.

\noi\textbf{Procedure} $u'=\text{CndG}_{X_s}(x,g,\aaa,\delta)$, $x\in X_s$, $g\in \mbb R^{n_s}$, $\aaa>0$, $\delta>0$, $s=1,2,...,b$:

1.Set $u_1=x\in X_s$, t=1.

2.Let
\be
v_{t+1}=\arg\min_{u\in X_s}\big\{V_{\aaa,s}(u)=\nn
g+\f{1}{\aaa}(\nabla\phi_s(u_t)-\nabla\phi_s(x)),u-u_t\mm+\chi_s(u)-\chi_s(u_t)\big\}.\label{appcnd}
\ee

3.If $V_{\aaa,s}(v_{t+1})\geq-\delta$, set $u'=u_{t}$ and terminate the procedure; else set $u_{t+1}=(1-\aaa_t)u_t+\aaa_tv_{t+1}$, where $\aaa_t=\f{2}{t+1}$.

4.Set $t\leftarrow t+1$ and go to 2.

\noi\textbf{Output $x_R$}.
\end{framed}

\begin{rmk}
Existing BCCG methods are very limited in stochastic optimization literature (\cite{b7}, \cite{b8}). To the best of our knowledge, this is the first work to implement a stochastic BCCG method by considering SZO, in contrast to the deterministic optimization in \cite{b7}, \cite{b8}. However, when the accurate information of the gradient of the objective function may be difficult to obtain, the proposed BCCG algorithms in this section are more convenient to face the challenges of stochastic circumstances. The objective functions in work \cite{b7}, \cite{b8} are all convex, that is a strong restriction in many modern practical situations like deep learning. The proposed stochastic BCCG algorithms relax the condition and also work well for nonconvex stochastic composite optimization, which is a more general setting. We mention that if the objective function in this paper is assumed to be convex, the proposed algorithms are also suitable for solving it. Moreover, it is expected to develop a stochastic cyclic BCCG method in the future for solving stochastic (nonconvex) optimization problem. Since the random block selection rule often has a better performance of convergence behavior than purely cyclic block selection rule in deterministic case (see e.g., \cite{b1}, \cite{b7}), we conjecture that this fact also holds in stochastic CG optimization setting. The strict proof is expected in the future work.
\end{rmk}

\blm
For $s=1,2,...,b$, let $P_s^{\delta}(x,g,\aaa)$ be the approximate solution of the generalized projection problem \eqref{P}
such that
\be
\nn g+\f{1}{\aaa}\big[\nabla\phi_s(P_s^{\delta}(x,g,\aaa))-\nabla\phi_s(x),u-P_s^{\delta}(x,g,\aaa)\big]\mm+\chi_s(u)-\chi_s(P_s^{\delta}(x,g,\aaa))\geq-\delta ,\forall u\in X_s. \label{app}
\ee
Then the following error estimate holds
\be
\big\|P_s(x,g,\aaa)-P_s^{\delta}(x,g,\aaa)\big\|_s^2\leq\aaa\delta, s=1,2,...,b.
\ee
\elm
\begin{proof}
In following proof, for saving space, denote $P_s=P_s(x,g,\aaa)$ and $P_s^{\delta}=P_s^{\delta}(x,g,\aaa)$. The optimality condition implies that there exists $h_s\in\partial\chi_s(P_s)$ such that
\be
\nn g+\f{1}{\aaa}\big[\nabla\phi_s(P_s)-\nabla\phi_s(x)\big]+h_s,u_1-P_s\mm\geq0, \ u_1\in X_s. \label{u_1}
\ee
By setting $u_1=P_s^{\delta}$ in \eqref{u_1} and $u=P_s$ in \eqref{app}, and adding them together, we have
\be
\nn\f{1}{\aaa}\big[\nabla\phi_s(P_s)-\phi_s(P_s^{\delta})\big]+h_s,P_s^{\delta}-P_s\mm+\chi_s(P_s)-\chi_s(P_s^{\delta})\geq-\delta. \label{add}
\ee
Note that by convexity of function $\chi_s$ at $P_s$, we have
\be
\chi_s(P_s^{\delta})-\chi_s(P_s)\geq \nn h_s,P_s^{\delta}-P_s\mm. \label{chicon}
\ee
Combine \eqref{add} with \eqref{chicon} and rearrange terms, we have
\be
\nn\nabla\phi_s(P_s)-\nabla\phi_s(P_s^{\delta}),P_s-P_s^{\delta}\mm\leq\aaa\delta.
\ee
The desired result is obtained after using the 1-strong convexity of $\phi_s$.
\end{proof}
Now it's ready to present the main result of the approximate ZS-BCCG method for nonconvex composite optimization. The result is also based on an i.i.d randomization selection scheme on block coordinate. The following theorem provides the estimate on generalized gradient in approximate ZS-BCCG method in terms of approximating parameters $\{\delta_k\}$, stepsizes $\{\aaa_k\}$, dimension $n$, Lipschitz constants $L_s$, $s=1,2,...,b$, $L_f$, and block coordinate probabilities $p_s$, $s=1,2,...,b$. The theorem is the foundation to further achieve rate of convergence and complexity bound.
\begin{thm}\label{thmacnd}
Under Assumptions \ref{ass1}, \ref{ass2}, \ref{ass3}. Suppose the stepsizes $\{\aaa_k\}$, $k\geq1$ in approximate ZS-BCCG satisfy $\aaa_k\leq1/L_s$, $s\in \b$. The probability mass function $P_R$ is chosen such that
\be
P_R(k):=Prob\big\{R=k\big\}=\f{\aaa_k\min_{s\in\b}\{ p_s(1-L_s\aaa_k)\}}{\sum_{k=1}^T\aaa_k\min_{s\in\b} \{p_s(1-L_s\aaa_k)\}}, \ k=1,2,...,T.\label{masscond}
\ee
Then we have
\be
\mbb E\big[\|\p(x_R,\nabla f(x_R),\aaa_R)\|^2\big]\leq\f{2[\Phi(x_1)-\Phi^{*}]+6\sum_{k=1}^T\delta_k+\sum_{k=1}^T\max_{s\in\b}\big\{p_s(\f{1}{L_s}+4\aaa_k)\big\}\big[\f{2\w{\sigma}^2}{T_k}+\f{\mu^2}{2}L_f^2(n+3)^3\big]}{\sum_{k=1}^T\aaa_k\min_{s\in\b}\big\{p_s(1-L_s\aaa_k)\big\}} \label{maincnd}
\ee
\end{thm}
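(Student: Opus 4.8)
The approach mirrors the proof of Theorem \ref{general main}, with the exact Bregman prox step replaced by the inexact conditional-gradient step and the inexactness pushed into an additive error. First I would observe that, since the $\mathrm{CndG}$ procedure terminates only once $V_{\aaa_k,i_k}(v_{t+1})\ge-\delta_k$, its output obeys the $\delta_k$-approximate optimality condition \eqref{app}; thus $x_{k+1}^{(i_k)}=P_{i_k}^{\delta_k}(x_k^{(i_k)},\bar G_{\mu,k}^{(i_k)},\aaa_k)$, and the block step can be written $x_{k+1}^{(i_k)}-x_k^{(i_k)}=-\aaa_k q_k$ with $q_k$ the inexact generalized block gradient mapping built from $\bar G_{\mu,k}^{(i_k)}$. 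Starting from the block $L_{i_k}$-Lipschitz inequality for $\nabla f_\mu$ and writing $\nabla_{i_k}f_\mu(x_k)=\bar G_{\mu,k}^{(i_k)}-\w\Delta_k$ with $\w\Delta_k=\bar G_{\mu,k}^{(i_k)}-\nabla_{i_k}f_\mu(x_k)$, one gets
\[
f_\mu(x_{k+1})\le f_\mu(x_k)-\aaa_k\nn\bar G_{\mu,k}^{(i_k)},q_k\mm+\aaa_k\nn\w\Delta_k,q_k\mm+\tfrac{L_{i_k}}{2}\aaa_k^2\|q_k\|_{i_k}^2 .
\]

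Next I would feed \eqref{app}, tested at $u=x_k^{(i_k)}$, together with the $1$-strong convexity of $\phi_{i_k}$, into the term $\nn\bar G_{\mu,k}^{(i_k)},q_k\mm$; this is the inexact analogue of Lemma \ref{lemma1} and yields $\aaa_k\nn\bar G_{\mu,k}^{(i_k)},q_k\mm\ge\aaa_k\|q_k\|_{i_k}^2+\chi_{i_k}(U_{i_k}^Tx_{k+1})-\chi_{i_k}(U_{i_k}^Tx_k)-\delta_k$. I would then bound the stochastic cross term by Young's inequality with parameter $L_{i_k}\aaa_k$, i.e. $\aaa_k\nn\w\Delta_k,q_k\mm\le\tfrac{1}{2L_{i_k}}\|\w\Delta_k\|_{i_k}^2+\tfrac{L_{i_k}}{2}\aaa_k^2\|q_k\|_{i_k}^2$. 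Adding $\chi$ to both sides and combining, the two $\tfrac{L_{i_k}}{2}\aaa_k^2\|q_k\|_{i_k}^2$ contributions fuse into one $L_{i_k}\aaa_k^2\|q_k\|_{i_k}^2$, producing the per-step inequality
\[
\Phi_\mu(x_{k+1})\le\Phi_\mu(x_k)-\aaa_k(1-L_{i_k}\aaa_k)\|q_k\|_{i_k}^2+\delta_k+\tfrac{1}{2L_{i_k}}\|\w\Delta_k\|_{i_k}^2 .
\]
The coefficient $1-L_{i_k}\aaa_k$ (versus $1-\tfrac{L_{i_k}}{2}\aaa_k$ in Theorem \ref{general main}) is precisely the cost of absorbing the noise by Young's inequality rather than by a martingale identity in this inexact first-order setting, and is why the step size must satisfy $\aaa_k\le 1/L_s$.

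The third step converts $\|q_k\|_{i_k}^2$ into $\|\pf\|_{i_k}^2$, the exact generalized block gradient mapping of the true gradient. I would use $\|\pf\|_{i_k}^2\le 2\|\pf-q_k\|_{i_k}^2+2\|q_k\|_{i_k}^2$ and control $\|\pf-q_k\|_{i_k}$ along the chain $q_k\to\p_{i_k}(x_k,U_{i_k}\bar G_{\mu,k}^{(i_k)},\aaa_k)\to\pfm\to\pf$: the first link by the just-proved error estimate $\|P_s-P_s^{\delta}\|_s^2\le\aaa\delta$ (equivalently $\|q_k-\p_{i_k}(x_k,U_{i_k}\bar G_{\mu,k}^{(i_k)},\aaa_k)\|_{i_k}^2\le\delta_k/\aaa_k$), the next two by the block non-expansion property (Lemma \ref{lemma2}), and the resulting $\|\nabla_{i_k}f_\mu(x_k)-\nabla_{i_k}f(x_k)\|_{i_k}$ by $\tfrac{\mu}{2}L_f(n+3)^{3/2}$ from \eqref{fmu-f}. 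The repeated applications of $\|a\|^2\le 2\|a-b\|^2+2\|b\|^2$ are exactly what generate the constants $2$ and $6$ and the coefficient $\max_{s\in\b}\{p_s(\tfrac1{L_s}+4\aaa_k)\}$ multiplying $\tfrac{2\w\sigma^2}{T_k}+\tfrac{\mu^2}{2}L_f^2(n+3)^3$.

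Finally, I would sum over $k=1,\dots,T$, take total expectation over $i_{[T]}$ and $\tau_{[T]}$, and use $\Phi_\mu(x_{T+1})\ge\Phi_\mu^*\ge\Phi^*-\mu^2L_fn$ and $\Phi_\mu(x_1)\le\Phi(x_1)+\mu^2L_fn$. The minibatch variance estimate $\mbb E_{\zeta_{[k-1]}}[\|\w\Delta_k\|_{i_k}^2]\le(\max_{s\in\b}p_s)\w\sigma^2/T_k$ with $\w\sigma^2$ as in \eqref{sigma} is reused essentially verbatim from \eqref{tui1}--\eqref{sigma}, and the block-to-full passage rests on $\mbb E_{i_k}[(1-L_{i_k}\aaa_k)\|\pf\|_{i_k}^2]\ge\min_{s\in\b}\{p_s(1-L_s\aaa_k)\}\,\mbb E[\|\p(x_k,\nabla f(x_k),\aaa_k)\|^2]$, with the other block-indexed terms bounded above by the corresponding $\max_{s\in\b}\{\cdot\}$. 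Clearing the factor $\tfrac12$ coming from the conversion step (this is what turns $[\Phi(x_1)-\Phi^*]$ into $2[\Phi(x_1)-\Phi^*]$ and $\delta_k$ into $6\sum_k\delta_k$), dividing by $\sum_{k=1}^T\aaa_k\min_{s\in\b}\{p_s(1-L_s\aaa_k)\}$, and substituting the definition \eqref{masscond} of $P_R$ gives \eqref{maincnd}. The main obstacle is purely the constant bookkeeping in the third step: one must arrange the nested splittings so that $\|\p(x_k,\nabla f(x_k),\aaa_k)\|^2$ survives with exactly the coefficient that leaves $1$ after clearing the $\tfrac12$, while the inexactness residuals $\delta_k/\aaa_k$, rescaled by $\aaa_k$, reassemble into the clean $6\sum_k\delta_k$ and the smoothing/variance residuals collect uniformly into the stated $\max_{s\in\b}\{p_s(\tfrac1{L_s}+4\aaa_k)\}$-weighted term; a secondary technical point is reconciling $\chi$ evaluated at the inexact iterate with what the exact prox point would give, handled again via $\|P_s-P_s^\delta\|_s^2\le\aaa\delta$ and \eqref{app}.
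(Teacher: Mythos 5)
Your proposal is correct and follows essentially the same route as the paper's proof: a per-step descent inequality obtained from the block Lipschitz bound plus the $\delta_k$-approximate optimality condition of the CndG output tested at $u=x_k^{(i_k)}$ (with $1$-strong convexity of $\phi_{i_k}$), Young's inequality to absorb the stochastic cross term at the cost of turning $1-\tfrac{L_{i_k}}{2}\aaa_k$ into $1-L_{i_k}\aaa_k$, telescoping, conversion of the displacement into $\|\p_{i_k}(x_k,\nabla f(x_k),\aaa_k)\|_{i_k}^2$ via the bound $\|P_s-P_s^{\delta}\|_s^2\le\aaa\delta$ and the non-expansion Lemma \ref{lemma2}, and finally the conditional-expectation passage with $\min_{s}\{p_s(1-L_s\aaa_k)\}$ and $\max_{s}\{p_s(\tfrac{1}{L_s}+4\aaa_k)\}$ followed by the definition of $P_R$.

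The one place you deviate is in running the recursion on $\Phi_\mu$ with noise $\w\Delta_k=\bar G_{\mu,k}^{(i_k)}-\nabla_{i_k}f_\mu(x_k)$ and then converting $\Phi_\mu$ back to $\Phi$ at the end. The paper instead works with $\Phi$ directly and measures the error against the true gradient, $\bar\Delta_k=\bar G_{\mu,k}^{(i_k)}-\nabla_{i_k}f(x_k)$, so that the entire smoothing bias sits inside $\|\bar\Delta_k\|_{i_k}^2$, which is then bounded (as in \eqref{sii}) by $\max_{s\in\b}\{p_s/L_s\}$-type factors times $\tfrac{2\w\sigma^2}{T_k}+\tfrac{\mu^2}{2}L_f^2(n+3)^3$; this is precisely what produces the stated numerator $2[\Phi(x_1)-\Phi^{*}]+6\sum_k\delta_k+\dots$ with no leftover $\mu^2L_fn$ term. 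Your route yields a bound of the same form and order, but the $\Phi_\mu\leftrightarrow\Phi$ conversion injects an extra additive $O(\mu^2L_fn)$ into the numerator and shifts where the $\tfrac{\mu^2}{2}L_f^2(n+3)^3$ term enters (through the extra link $\p_{i_k}(x_k,\nabla f_\mu(x_k),\aaa_k)\to\p_{i_k}(x_k,\nabla f(x_k),\aaa_k)$ rather than through the variance bound), so the constants do not come out literally as in \eqref{maincnd} without redoing the splitting the paper's way. This is a bookkeeping issue, not a conceptual gap; switching to the $f$-based error decomposition makes your argument coincide with the paper's.
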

\begin{proof}
Use the block Lipschitz property of $f$, it follows that
\be
f(x_{k+1})\leq f(x_k)+\nn\nabla_{i_k}f(x_k),x_{k+1}^{(i_k)}-x_k^{(i_k)}\mm+\f{L_{i_k}}{2}\|x_{k+1}-x_k\|_{i_k}^2.\label{com1}
\ee
Also note that, by \eqref{cndg}, we have
\be
\nn\bar{G}_{\mu,k}^{(i_k)}+\f{1}{\aaa_k}\big[\nabla\phi_{i_k}(x_{k+1}^{(i_k)})-\nabla\phi_{i_k}(x_{k}^{(i_k)})\big],u-x_{k+1}^{(i_k)}\mm+\chi_{i_k}(u)-\chi_{i_k}(x_{k+1}^{(i_k)})\geq-\delta_k, \ \forall x\in X_{i_k}. \label{com2}
\ee
By setting $u=x_k^{(i_k)}$ in \eqref{com2}, summing it up with \eqref{com1}, using the 1-strong convexity of $\phi_{i_k}$ and noting that $\bar{\Delta}_k=\bar{G}_{\mu,k}^{(i_k)}(x_k)-\nabla_{i_k}f(x_k)$, we have
\bea
\nono f(x_{k+1})\leq f(x_k)+\chi_{i_k}(x_k^{(i_k)})-\chi_{i_k}(x_{k+1}^{(i_k)})-\big(\f{1}{\aaa_k}-\f{L_{i_k}}{2}\big)\|x_{k+1}^{(i_k)}-x_{k}^{(i_k)}\|_{i_k}^2+\nn\bar{\Delta}_k,x_k^{(i_k)}-x_{k+1}^{(i_k)}\mm+\delta_k,
\eea
which together with the fact $\chi_{i_k}(x_k^{(i_k)})-\chi_{i_k}(x_{k+1}^{(i_k)})=\chi(x_k)-\chi(x_{k+1})$, $\Phi(x)=f(x)+\chi(x)$ and $\nn\bar{\Delta}_k,x_k^{(i_k)}-x_{k+1}^{(i_k)}\mm\leq \f{L_{i_k}}{2}\|x_{k+1}^{(i_k)}-x_k^{(i_k)}\|_{i_k}^2+\f{\|\bar{\Delta}_k\|_{i_k}^2}{2L_{i_k}}$ implies
\be
\nono\Phi(x_{k+1})\leq\Phi(x_k)-\big(\f{1}{\aaa_k}-L_{i_k}\big)\|x_{k+1}^{(i_k)}-x_{k}^{(i_k)}\|_{i_k}^2+\f{\|\bar{\Delta}_k\|_{i_k}^2}{2L_{i_k}}+\delta_k.
\ee
Sum up both sides of the above inequality from $k=1$ to $T$, rearrange terms and note that $\Phi(x_{T+1})\geq\Phi^{*}$, we have
\be
\sum_{k=1}^T\big(\f{1}{\aaa_k}-L_{i_k}\big)\|x_{k+1}^{(i_k)}-x_k^{(i_k)}\|_{i_k}^2\leq\Phi(x_1)-\Phi^{*}+\f{1}{2}\sum_{k=1}^T\f{\|\bar{\Delta}_k\|_{i_k}^2}{L_{i_k}}+\sum_{k=1}^T\delta_k.\label{guoduqian}
\ee
On the other hand, note the fact that $x_{k+1}^{(i_k)}=P_{i_k}^{\delta_k}(x_k^{(i_k)},\bar{G}_{\mu,k}^{(i_k)},\aaa_k)$,
\bea
\nono&&\aaa_k^2\big\|\p_{i_k}(x_k,\nabla f(x_k),\aaa_k)\big\|_{i_k}^2=\big\|x_k^{(i_k)}-P_{i_k}(x_k^{(i_k)},\nabla_{i_k}f(x_k),\aaa_k)\big\|_{i_k}^2 \\
\nono&&=\big\|x_k^{(i_k)}-x_{k+1}^{(i_k)}+P_{i_k}^{\delta_k}(x_k^{(i_k)},\bar{G}_{\mu,k}^{(i_k)},\aaa_k)-P_{i_k}(x_k^{(i_k)},\bar{G}_{\mu,k}^{(i_k)},\aaa_k) \\
\nono&&\ \ \ \ +P_{i_k}(x_k^{(i_k)},\bar{G}_{\mu,k}^{(i_k)},\aaa_k)-P_{i_k}(x_k^{(i_k)},\nabla_{i_k}f(x_k),\aaa_k)\big\|_{i_k}^2 \\
&&\leq 2\|x_k^{(i_k)}-x_{k+1}^{(i_k)}\|_{i_k}^2+4\aaa_k\delta_k+4\aaa_k^2\|\bar{\Delta}_k\|_{i_k}^2. \label{guodu}
\eea
Multiplying both sides of \eqref{guodu} by $\f{\aaa_k-L_{i_k}\aaa_k^2}{2\aaa_k^2}$ and combining it with \eqref{guoduqian}, we have
\bea
\nono&&\sum_{k=1}^T\big(\f{\aaa_k-L_{i_k}\aaa_k^2}{2}\big)\big\|\pf\big\|_{i_k}^2  \\
&&\leq\Phi(x_1)-\Phi^{*}+\sum_{k=1}^T\big(3-2L_{i_k}\aaa_k\big)\delta_k+\sum_{k=1}^T\big(\f{1}{2L_{i_k}}+2\aaa_k-2L_{i_k}\aaa_k^2\big)\|\bar{\Delta}_k\|_{i_k}^2,
\eea
which in view of the fact that $L_{i_k}\aaa_k\geq0$ and $L_{i_k}\aaa_k^2\geq0$ implies that
\be
\sum_{k=1}^T\big(\aaa_k-L_{i_k}\aaa_k^2\big)\big\|\pf\big\|_{i_k}^2\leq2\big[\Phi(x_1)-\Phi^{*}\big]+6\sum_{k=1}^T\delta_k+\sum_{k=1}^T\big(\f{1}{L_{i_k}}+4\aaa_k\big)\|\bar{\Delta}_k\|_{i_k}^2.\label{expqian}
\ee
Note that
\bea
\nono&&\mbb E_{i_{[k-1]},\tau_{[k-1]}}\bigg[(\aaa_k-L_{i_k}\aaa_k^2)\|\pf\|_{i_k}^2\bigg]  \\
\nono&&=\aaa_k\sum_{s=1}^bp_s(1-L_s\aaa_k)\mbb E_{\tau_{[k-1]}}\big[\|\p_s(x_k,\nabla f(x_k),\aaa_k)\|_s^2\big] \\
\nono&&\geq\aaa_k\min_{s\in\b}p_s(1-L_s\aaa_k)\bigg(\sum_{s=1}^b\mbb E_{\tau_{[k-1]}}\big[\|\p_s(x_k,\nabla f(x_k),\aaa_k)\|_s^2\big]\bigg) \\
\nono&&\geq\aaa_k\min_{s\in\b}p_s(1-L_s\aaa_k)\mbb E_{\tau_{[k-1]}}\big[\|\p(x_k,\nabla f(x_k),\aaa_k)\|^2\big],
\eea
and
\bea
\nono\mbb E_{i_{[k-1]},\tau_{[k-1]}}\bigg[\big(\f{1}{L_{i_k}}+4\aaa_k\big)\|\bar{\Delta}_k\|_{i_k}^2\bigg]\leq\max_{s\in\b}\big\{p_s\big(\f{1}{L_s}+4\aaa_k\big)\big\}\big[\f{2\w{\sigma}^2}{T_k}+\f{\mu^2}{2}L_f^2(n+3)^3\big],
\eea
which follows from the similar reason with \eqref{sii}. Take total expectation over $i_{[T]},\tau_{[T]}$ on both sides of \eqref{expqian} and we conclude the proof.
\end{proof}
\begin{rmk}
Theorem \ref{thmacnd} provides an important convergence result of approximate ZS-BCCG in a generalized gradient performance. It can be treated  as a counterpart of Theorem \ref{general main} for ZS-BMD. However, there are several quite different restrictions on parameters. The available range for the stepsizes in two methods are $(0,2/L_s]$ and $(0,1/L_s]$ respectively. In contrast to ZS-BMD, the approximate parameters $\delta_k$ in approximating ZS-BCCG highlight the importance of approximation technique used in ZS-BCCG which ZS-BMD in last section does not implement directly.
\end{rmk}
The selection details of the related parameters of approximate ZS-BCCG is analyzed in the following corollary.

\begin{cor}\label{cor6}
Under assumptions of Theorem \ref{thmacnd}, denote $D_{\Phi}=[(\Phi(x_1)-\Phi^{*})/\hat{L}]^{\f{1}{2}}$. Suppose that in the approximate ZS-BCCG, stepsizes $\{\aaa_k\}$ are taken as $\aaa_k=1/2\hat{L}$, approximating parameters $\{\delta_k\}$  are taken as $\delta_k=\delta=1/3T$, random varibles $\{i_k\}$ are uniformly distributed with block index $b$ ($p_1=p_2=\cdots=p_b=1/b$), batch sample size of each step is taken as $T_k=T'$, and the block stochastic gradient $\bar{G}_{\mu,k}^{(i_k)}$ is computed as in \eqref{conG}, then we have
\be
\mbb E\big[\|\p(x_R,\nabla f(x_R),\aaa_R)\|^2\big]\leq\f{4b\hat{L}^2D_{\Phi}^2+4b\hat{L}}{T}+\big(\f{4\hat{L}}{\check{L}}+8\big)\f{\w{\sigma}^2}{T'}+\big(\f{\hat{L}}{\check{L}}+2\big)\mu^2L_f^2(n+3)^3. \label{cor66}
\ee
\end{cor}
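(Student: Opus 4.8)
The statement is a direct specialization of the master estimate \eqref{maincnd} in Theorem \ref{thmacnd}, so the plan is to substitute the prescribed parameters and simplify; no new argument is required. First I would check that the hypotheses of Theorem \ref{thmacnd} hold for these data: since $L_s\le\hat L$ we have $\aaa_k=1/(2\hat L)\le 1/(2L_s)\le 1/L_s$ for every $s\in\b$, so the stepsize restriction is met, and taking $p_s=1/b$ makes $P_R$ the uniform instance of \eqref{masscond}. Hence \eqref{maincnd} applies verbatim with these choices.

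Then I would estimate the three blocks of the right-hand side of \eqref{maincnd} separately. For the denominator, uniformity of $p_s$ together with $L_s\le\hat L$ gives $\min_{s\in\b}\{p_s(1-L_s\aaa_k)\}=\f{1}{b}(1-\hat L\aaa_k)\ge\f{1}{2b}$, so $\sum_{k=1}^T\aaa_k\min_{s\in\b}\{p_s(1-L_s\aaa_k)\}$ is of order $T/(b\hat L)$; this is the factor producing the $b\hat L/T$ scaling and, via $\hat L^2D_{\Phi}^2=\hat L(\Phi(x_1)-\Phi^{*})$, the $b\hat L^2D_{\Phi}^2/T$ scaling of the first term of \eqref{cor66}. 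For the approximation term, the choice $\delta_k=\delta=1/(3T)$ yields $6\sum_{k=1}^T\delta_k=2$, a bounded constant that is absorbed into the $O(b\hat L/T)$ term along with $2[\Phi(x_1)-\Phi^{*}]$. For the stochastic-plus-smoothing term, uniformity gives $\max_{s\in\b}\{p_s(\f{1}{L_s}+4\aaa_k)\}=\f{1}{b}\big(\f{1}{\check L}+\f{2}{\hat L}\big)$ because $\max_{s\in\b}\f{1}{L_s}=\f{1}{\check L}$; summing over $k$ with $T_k=T'$ and dividing by the denominator converts the bracket $\big[\f{2\w{\sigma}^2}{T_k}+\f{\mu^2}{2}L_f^2(n+3)^3\big]$ into the last two terms of \eqref{cor66}, the coefficients $\f{4\hat L}{\check L}+8$ and $\f{\hat L}{\check L}+2$ arising from the product $\hat L\cdot\big(\f{1}{\check L}+\f{2}{\hat L}\big)$.

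Adding the three contributions reproduces \eqref{cor66}. The only delicate point is the bookkeeping of numerical constants and of the $\hat L/\check L$ ratios when the $\max_{s\in\b}$ term is divided by the $\min_{s\in\b}$ term in the denominator; there is no structural obstacle, the entire argument being the substitution of the explicit $\aaa_k$, $\delta_k$, $p_s$, $T_k$ into \eqref{maincnd} followed by routine algebraic simplification.
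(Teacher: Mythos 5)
Your proposal is correct and follows essentially the same route as the paper's proof: substitute $\aaa_k=1/(2\hat L)$, $p_s=1/b$, $\delta_k=1/(3T)$, $T_k=T'$ into \eqref{maincnd}, bound the block minimum and maximum using $\check L\le L_s\le\hat L$, and simplify. One bookkeeping remark: your (correct) estimate $\min_{s\in\b}\{p_s(1-L_s\aaa_k)\}\ge\f{1}{2b}$ makes the denominator at least $T/(4b\hat L)$ and would therefore yield constants twice those displayed in \eqref{cor66}, whereas the paper's own proof reaches the stated constants via the looser claim $\sum_{k=1}^T\aaa_k\min_{s\in\b}\{p_s(1-L_s\aaa_k)\}\ge T/(2b\hat L)$ (which silently drops the factor $1-L_s\aaa_k\ge\f{1}{2}$), so the factor-of-two discrepancy is a slip in the paper's bookkeeping rather than a gap in your argument.
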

\begin{proof}
The result follows directly by noting that
\be
\nono\sum_{k=1}^T\aaa_k\min_{s\in\b}\big\{p_s(1-L_s\aaa_k)\big\}\geq\sum_{k=1}^T\f{1}{2\hat{L}}\cdot\f{1}{b}=\f{T}{2b\hat{L}},
\ee
\be
\nono\max_{s\in\b}\big\{p_s(\f{1}{L_s}+4\aaa_k)\big\}\leq\f{1}{b\check{L}}+\f{2}{b\hat{L}},
\ee
and substituting them into \eqref{maincnd}, and rearranging terms.
\end{proof}

\begin{rmk}
Note that in the above ZS-BCCG, when $\mu$ is taken as $\mu=O(1/(n+4)^{\f{3}{2}}\sqrt{T})$, batch sample size $T'$ is taken as $T'=O((n+4)T)$, an $\o(b^2n/\eee^2)$ complexity bound is obtained for finding an $\eee$-stationary point of the composite problem \eqref{pro2}. This complexity bound coincides with the complexity bound in Remark \ref{comrmk1} of Corollary \ref{cor2} for ZS-BMD in last section. Although they theoretically share the same rate, however, as a result of the appearance of the Bregman divergence in the algorithm structure of ZS-BMD, solving subproblem \eqref{BMD} can be generally much more difficult than subproblem \eqref{cndg} due to the expensive computation. Moreover, the above fact become more obvious when the block regularization function $\chi_s(x)$ is taken as some examples with special convex structure like entropy function $\chi_{s}(x^{(s)})=\sum_{s=s_1}^{s=s_k}x^{s}\log x^{s}$, $(x^{s_1},...,x^{s_k})$ denote the component of block variable $x^{(s)}$; $\chi_s(x^{(s)})=l\|x^{(s)}\|_1 \ \text{or} \ l\|x^{(s)}\|_{\infty}$ for some $l>0$; or $p$-norm $\chi_s(x^{(s)})=\f{1}{2}\|x^{(s)}\|_p^2$, $p\in (1,2]$. In these situations, it would be better to consider the proposed approximate ZS-BCCG method.
\end{rmk}
We establish the convergence result for approximate ZS-BCCG which has an explicit representation in terms of the total calls of the zeroth-order stochastic oracle $\w{T}$ after selection of $\mu$ and $T'$ in terms of $\w{T}$. The result is the foundation to further establish the two-phase ZS-BCCG method. In the rest of the paper, we denote $\ooo_L=\hat{L}/\check{L}+2$.
\begin{cor}
Under assumptions of Corollary \ref{cor6}, suppose the smoothing parameter $\mu$ satisfies
\be
\mu\leq \f{D_{\Phi}}{n+4}\sqrt{\f{b}{\w{T}}},\label{condmu}
\ee
the number of calls to SZO at each iteration of ZS-BCCG is
\be
T'=\bigg\lceil\min\bigg\{\max\bigg\{\f{\ooo_L\sqrt{(n+4)(2M^2+\sigma^2)\w{T}}}{\w{D}},\ooo_L(n+4)\bigg\},\w{T}\bigg\}\bigg\rceil,  \label{batchsizec}
\ee
for some $\w{D}>0$, then the generalized gradient in ZS-BCCG satisfies $\f{1}{\ooo_{L}b}\mbb E\big\{\|\p(x_R,\nabla f(x_R),\aaa_R)\|^2\big\}\leq\textbf{C}_{\w{T}}$, where
\be
\textbf{C}_{\w{T}}=\f{8\sqrt{(n+4)(2M^2+\sigma^2)}}{\sqrt{\w{T}}}\bigg(2\kkk_1\w{D}+\f{\hat{L}^2D_{\Phi}^2+\hat{L}}{\w{D}}\bigg)+\f{\big[(16\kkk_2+1)L_f^2D_{\Phi}^2+8(\hat{L}^2D_{\Phi}^2+\hat{L})\big](n+4)}{\w{T}},\label{ct}
\ee
and
\be
\kkk_1=\max\bigg\{\f{\ooo_L\sqrt{(n+4)(2M^2+\sigma^2)}}{\w{D}\sqrt{\w{T}}},1\bigg\}, \  \kkk_2=\max\bigg\{\f{\ooo_L(n+4)}{\w{T}},1\bigg\}.
\ee
\end{cor}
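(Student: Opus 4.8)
The plan is to follow exactly the bookkeeping used for the corollary immediately after Corollary~\ref{cor2}, taking the estimate \eqref{cor66} of Corollary~\ref{cor6} as the starting point and feeding in the prescribed choices of $\mu$, $T'$ and the relation $T=\lfloor\w{T}/T'\rfloor$. No new idea beyond Corollary~\ref{cor6} and a handful of elementary inequalities is needed; the content of the proof is the parameter accounting.

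First I would record two preliminary reductions. Since \eqref{batchsizec} forces $T'\le\w{T}$, we have $\w{T}/T'\ge1$, hence $T=\lfloor\w{T}/T'\rfloor\ge\w{T}/(2T')$, i.e. $1/T\le 2T'/\w{T}$, and also $T'\ge1$. Next, \eqref{condmu} gives $\mu^2\le D_{\Phi}^2 b/[(n+4)^2\w{T}]$, which simultaneously controls the explicit smoothing term $\mu^2L_f^2(n+3)^3\le\mu^2L_f^2(n+4)^3\le D_{\Phi}^2 bL_f^2(n+4)/\w{T}$ and the summand $\mu^2L_f^2(n+4)^2\le D_{\Phi}^2 bL_f^2/\w{T}$ hidden inside $\w{\sigma}^2$ (recall \eqref{sigma}), so that $\w{\sigma}^2\le 4(n+4)[2M^2+\sigma^2+D_{\Phi}^2 bL_f^2/\w{T}]$. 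Dividing \eqref{cor66} through by $\ooo_L b$ and substituting these bounds splits the right-hand side into three groups: a ``$1/T$-group'' bounded by $8(\hat L^2D_{\Phi}^2+\hat L)T'/(\ooo_L\w{T})$, a ``$1/T'$-group'' bounded by $16(n+4)(2M^2+\sigma^2)/(bT')+16(n+4)D_{\Phi}^2L_f^2/(T'\w{T})$, and a ``smoothing group'' of order $D_{\Phi}^2L_f^2(n+4)/\w{T}$ — the latter is already the ``$+1$'' inside the factor $16\kkk_2+1$ of $\textbf{C}_{\w{T}}$.

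Then I would substitute the definition of $T'$, using the inequality in the appropriate direction for each group. For the $1/T'$-group I use the lower bounds $T'\ge\ooo_L\sqrt{(n+4)(2M^2+\sigma^2)\w{T}}/\w{D}$ and $T'\ge\ooo_L(n+4)$; the first turns $16(n+4)(2M^2+\sigma^2)/(bT')$ into a $\sqrt{(n+4)(2M^2+\sigma^2)}/\sqrt{\w{T}}$ contribution with coefficient $16\w{D}$ (after absorbing $1/(b\ooo_L)\le1$), matching the $8\cdot2\kkk_1\w{D}$ term, and the second turns $16(n+4)D_{\Phi}^2L_f^2/(T'\w{T})$ into the $D_{\Phi}^2L_f^2(n+4)/\w{T}$ contribution contained in $16\kkk_2 L_f^2 D_\Phi^2(n+4)/\w{T}$. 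For the $1/T$-group I use $T'\le\lceil\max\{A,B\}\rceil\le\max\{A,B\}+1$ with $A=\ooo_L\sqrt{(n+4)(2M^2+\sigma^2)\w{T}}/\w{D}$, $B=\ooo_L(n+4)$: the $A$-part produces the $8(\hat L^2D_{\Phi}^2+\hat L)\sqrt{(n+4)(2M^2+\sigma^2)}/(\w{D}\sqrt{\w{T}})$ term, the $B$-part produces the $8(\hat L^2D_{\Phi}^2+\hat L)(n+4)/\w{T}$ term, and the trailing ``$+1$'' is absorbed into the same $(n+4)/\w{T}$ bracket. Collecting everything and rewriting $\hat L/\check L+2=\ooo_L$ then yields precisely \eqref{ct}.

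The only genuinely delicate point — and the main obstacle — is that $T'$ may be clipped: either $\max\{A,B\}\ge\w{T}$ (so $T'=\w{T}$ and $T$ can be as small as $1$), or the ``wrong'' branch of $\max\{A,B\}$ is active, in which case the lower bounds on $T'$ used above are not directly available. The parameters $\kkk_1=\max\{\ooo_L\sqrt{(n+4)(2M^2+\sigma^2)}/(\w{D}\sqrt{\w{T}}),1\}$ and $\kkk_2=\max\{\ooo_L(n+4)/\w{T},1\}$ are introduced exactly to neutralize these cases: $\kkk_1>1$ records the regime $\w{T}<\ooo_L^2(n+4)(2M^2+\sigma^2)/\w{D}^2$, in which the $A$-clip is dominated because then $8\sqrt{(n+4)(2M^2+\sigma^2)}/(\w{D}\sqrt{\w{T}})\ge 8/\ooo_L$, and $\kkk_2>1$ records $\w{T}<\ooo_L(n+4)$, in which the $B$-clip is dominated because then $(n+4)/\w{T}\ge 1/(2\ooo_L)$. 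Verifying in each of these few cases that the resulting estimate is still bounded by the stated $\textbf{C}_{\w{T}}$, while keeping the numerical constants $8$, $2\kkk_1$, $16\kkk_2+1$ mutually consistent, is where essentially all of the labour of the proof lies.
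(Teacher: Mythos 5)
Your proposal follows the paper's own route: start from \eqref{cor66} of Corollary \ref{cor6}, use $T\ge\w{T}/(2T')$ and $T'\ge 1$, control the smoothing contributions (both the explicit term and the one inside $\w{\sigma}^2$) through \eqref{condmu}, and then trade the batch size \eqref{batchsizec} off against the three groups of terms; your bookkeeping in the unclipped regime reproduces \eqref{ct} exactly as the paper does. The only place where you make the argument harder than it is, is the ``delicate point'' at the end: no regime-by-regime analysis is needed, because with $A=\ooo_L\sqrt{(n+4)(2M^2+\sigma^2)\w{T}}/\w{D}$ and $B=\ooo_L(n+4)$ one has $A/\kkk_1=\min\{A,\w{T}\}$ and $B/\kkk_2=\min\{B,\w{T}\}$, hence $T'\ge A/\kkk_1$ and $T'\ge B/\kkk_2$ in every case (clipped or not), and $T'\le A/\kkk_1+B/\kkk_2$ up to the ceiling; these three inequalities are precisely how $\kkk_1,\kkk_2$ enter $\textbf{C}_{\w{T}}$ in the paper's proof, so the case verification you anticipate as the main labour --- and for which your ``because'' clauses only restate the regime conditions rather than finish the estimates --- simply disappears. (Both you and the paper gloss over the $+1$ coming from the ceiling in \eqref{batchsizec}; this affects only constants.)
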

\begin{proof}
Since $b\geq1$, $T\geq \w{T}/2T'$, substitute $\w{\sigma}^2=4(n+4)(2M^2+\sigma^2+\mu^2L_f^2(n+3)^3)$ into \eqref{cor66}, we have
\bea
\nono&&\eqref{cor66}=\f{4b\hat{L}^2D_{\Phi}^2+4b\hat{L}}{T}+\f{4\ooo_{L}b\w{\sigma}^2}{T'}+\ooo_L\mu^2L_f^2(n+3)^3 \\
\nono&&\ \ \ \ \ \ \ \  \leq\f{(8b\hat{L}^2D_{\Phi}^2+8b\hat{L})T'}{\w{T}}+\f{16\ooo_L(n+4)(2M^2+\sigma^2)b}{T'} \\
\nono&&\ \ \ \ \ \ \ \ \ \ \ +\f{16\ooo_L(n+4)L_f^2D_{\Phi}^2b}{T'\w{T}}+\f{\ooo_LL_f^2D_{\Phi}^2(n+4)b}{\w{T}} \\
\nono&&\ \ \ \ \ \ \ \ \leq\f{8b\hat{L}^2D_{\Phi}^2+8b\hat{L}}{\w{T}}\bigg[\f{\ooo_L\sqrt{(n+4)(2M^2+\sigma^2)\w{T}}}{\w{D}\kkk_1}+\f{\ooo_L(n+4)}{\kkk_2}\bigg] \\
\nono&&\ \ \ \ \ \ \ \ \ \ \ + \f{16\kkk_1\ooo_L\w{D}b\sqrt{(n+4)(2M^2+\sigma^2)}}{\sqrt{\w{T}}}+\f{16\kkk_2\ooo_LL_f^2D_{\Phi}^2(n+4)b}{\w{T}}+\f{\ooo_LL_f^2D_{\Phi}^2(n+4)b}{\w{T}} \\
\nono&&\ \ \ \ \ \ \ \ \leq\f{(8b\hat{L}^2D_{\Phi}^2+8b\hat{L})\ooo_L\sqrt{(n+4)(2M^2+\sigma^2)})}{\w{D}\sqrt{\w{T}}}+\f{16\kkk_1\ooo_L\w{D}b\sqrt{(n+4)(2M^2+\sigma^2)}}{\sqrt{\w{T}}} \\
\nono&&\ \ \ \ \ \ \ \ \ \ \ +\f{(8b\hat{L}^2D_{\Phi}^2+8b\hat{L})\ooo_L(n+4)}{\w{T}}+\f{16\kkk_2\ooo_LL_f^2D_{\Phi}(n+4)b}{\w{T}}+\f{\ooo_LL_f^2D_{\Phi}^2(n+4)b}{\w{T}}.
\eea
Then desired result follows after rearranging terms and dividing both sides by $\ooo_Lb$.
\end{proof}

Now we propose to establish the complexity results of approximate ZS-BCCG for finding an $(\eee,\Lambda)$-solution of problem \eqref{pro2}. Markov inequality directly implies
\be
\nono Prob\big\{\|\p(x_R,\nabla f(x_R),\aaa_R)\|^2\geq\lambda b\ooo_L\textbf{C}_{\w{T}}\big\}\leq\f{1}{\lambda},\ \forall\lambda\geq0.
\ee
For any $\eee>0$ and $\Lambda\in(0,1)$, by setting $\lambda=\f{1}{\Lambda}$ and
\bea
\nono&&\w{T}=\bigg\lceil\max\bigg\{\ooo_L(n+4),\f{\ooo_L^2(n+4)(2M^2+\sigma^2)}{\w{D}^2}, \f{2\ooo_Lb[17L_f^2D_{\Phi}^2+8(\hat{L}D_{\Phi}^2+\hat{L})](n+4)}{\Lambda\eee}, \\
\nono&&\ \ \ \ \ \ \ \ \ \ \ \ \ \ \f{16^2\ooo_L^2b^2(n+4)(2M^2+\sigma^2)}{\Lambda^2\eee^2}\bigg(2\w{D}^2+\f{\hat{L^2}D_{\Phi}^2+\hat{L}}{\w{D}}\bigg)^2\bigg\}\bigg\rceil
\eea
in the above inequality, we obtain that, the complexity of $\w{T}$ for finding an $(\eee,\Lambda)$-solution in approximate ZS-BCCG method, after disregarding several constant factors, can be bounded by
\be
\o\bigg\{\f{\ooo_Lbn(L_f^2D_{\Phi}^2+\hat{L}D_{\Phi}^2+\hat{L})}{\Lambda\eee}+\f{\ooo_L^2b^2n(M^2+\sigma^2)}{\Lambda^2\eee^2}\bigg(\w{D}+\f{\hat{L}^2D_{\Phi}^2+\hat{L}}{\w{D}}\bigg)^2\bigg\}. \label{comple2}
\ee

\section{Two-phase ZS-BCCG optimization scheme}
To improve the complexity \eqref{comple2} obtained by using approximate ZS-BCCG in single run, we design a two-phase ZS-BCCG optimization procedure by following the approximate ZS-BCCG method.

\begin{framed}
\noindent\textbf{Two-phase ZS-BCCG:}

\noi\textbf{Input}: Initial point $x_1\in X_1\times X_2\times\cdots\times X_b$, generate the random variables $\{i_k\}$ that are uniformly distributed ($p_1=p_2=\cdots=p_b=1/b$), number of runs $S$, total $\w{T}$ of calls to SZO in each run of approximate ZS-BCCG, sample size $\t$ in the post-optimization phase.

\noi\textbf{Optimization phase}:

\noindent For $i=1,2,...,S$, call the approximate ZS-BCCG algorithm with input $x_1$, batch sizes $T_k=T'$ which is as in \eqref{batchsizec}, iteration limit $T=\lfloor\w{T}/T'\rfloor$, stepsizes $\aaa_k=1/2\hat{L}$, $k=1,2,...,T$, smoothing parameter $\mu$ satisfying \eqref{condmu} and approximating parameters $\delta_k=1/3T$, probability function $P_R$ in \eqref{masscond}. Output $\bar{x}_i=x_{R_i}$, $i=1,2,...,S$.

\noi\textbf{Post-optimization phase}:

\noindent Select a solution $\bar{x}_{*}$ from the candidate list $\{\bar{x}_1,\bar{x}_2,...,\bar{x}_S\}$ such that
\be
\bar{x}_{*}=\arg\min_{i=1,2,...,S}\big\|\bar{\g}_{G}(\bar{x}_{i})\big\|, \ \bar{\g}_{G}(\bar{x}_{i})=\p(\bar{x}_i,G_{\mu,\t}(\bar{x}_i),\aaa_{R_i}),
\ee
in which $G_{\mu,\t}(x)=\f{1}{\t}\sum_{k=1}^{\t}G_{\mu}(x,\xi_k,u_k)$.

\noi\textbf{Output $\bar{x}_{*}$.}
\end{framed}

\begin{thm}\label{promain2}
Under Assumptions \ref{ass1}, \ref{ass2}, \ref{ass3}. Let $\textbf{C}_{\w{T}}$ be defined as in \eqref{ct}, $b$ is the total block number. Then the 2-ZS-BCCG method for constrained composite optimization problem \eqref{pro2} satisfies
\bea
\nono&&Prob\bigg\{\big\|\p(\bar{x}_{*},\nabla f(\bar{x}_{*}),\aaa_{R_{*}})\|^2\geq16b\ooo_L\textbf{C}_{\w{T}}+\f{3D_{\Phi}^2L_f^2b(n+4)}{\w{T}}\\
&&\ \ \ \ \ \ \ \ \ \ \ +\f{32(n+4)\lambda}{\t}\bigg[2M^2+\sigma^2+\f{D_{\Phi}^2L_f^2b}{\w{T}}\bigg]\bigg\}\leq\f{S+1}{\lambda}+2^{-S}, \ \forall\lambda>0.
\eea
\end{thm}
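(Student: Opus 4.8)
The plan is to mirror the proof of Theorem \ref{promain1} almost verbatim, since the structure of 2-ZS-BCCG is identical to that of 2-ZS-BMD, the only difference being that the optimization phase invokes approximate ZS-BCCG (with its bound \eqref{ct}) instead of ZS-BMD (with its bound \eqref{dab}). I would begin by recording the same notation: write $\bar{\g}_f(\bar{x}_i)=\p(\bar{x}_i,\nabla f(\bar{x}_i),\aaa_{R_i})$ and $\bar{\g}_{f_{\mu}}(\bar{x}_i)=\p(\bar{x}_i,\nabla f_{\mu}(\bar{x}_i),\aaa_{R_i})$, and note that $\bar{\g}_G$ is as defined in the post-optimization phase. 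The algebraic decomposition leading to \eqref{two1} carries over word for word: repeatedly use $(a+b)^2\le 2a^2+2b^2$ together with $\bar{x}_*=\arg\min_i\|\bar{\g}_G(\bar{x}_i)\|$ to get
\be
\nono\big\|\bar{\g}_f(\bar{x}_{*})\big\|^2\leq 8\min_{i}\big\|\bar{\g}_f(\bar{x}_i)\big\|^2+4\max_{i}\big\|\bar{\g}_G(\bar{x}_i)-\bar{\g}_{f_{\mu}}(\bar{x}_i)\big\|^2+4\big\|\bar{\g}_G(\bar{x}_{*})-\bar{\g}_{f_{\mu}}(\bar{x}_{*})\big\|^2+12\max_{i}\big\|\bar{\g}_{f_{\mu}}(\bar{x}_i)-\bar{\g}_f(\bar{x}_i)\big\|^2.
\ee

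Next I would bound each of the four terms. For the first, the optimization-phase corollary preceding this theorem gives $\frac{1}{\ooo_L b}\mbb E[\|\p(x_R,\nabla f(x_R),\aaa_R)\|^2]\le \textbf{C}_{\w{T}}$, so by independence of $\bar{x}_1,\dots,\bar{x}_S$ and Markov's inequality, $Prob\{\min_i\|\bar{\g}_f(\bar{x}_i)\|^2\ge 2b\ooo_L\textbf{C}_{\w{T}}\}\le 2^{-S}$, exactly as in \eqref{two2}. For the second and third terms I would set $\frkd_{i,k}=G_{\mu}(\bar{x}_i,\xi_k,u_k)-\nabla f_{\mu}(\bar{x}_i)$; the same computation as in \eqref{dac}, using \eqref{condmu} in place of \eqref{smoothmu}, gives $\mbb E[\|\frkd_{i,k}\|^2]\le 4(n+4)(2M^2+\sigma^2+D_{\Phi}^2L_f^2 b/\w{T})=:\c_{\w{T}}$ (here the factor $b$ survives because \eqref{condmu} carries a $\sqrt{b}$). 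Then the block non-expansion property (Lemma \ref{lemma2}) reduces $\|\bar{\g}_G(\bar{x}_i)-\bar{\g}_{f_{\mu}}(\bar{x}_i)\|^2$ to $\|G_{\mu,\t}(\bar{x}_i)-\nabla f_{\mu}(\bar{x}_i)\|^2=\|\sum_{k=1}^{\t}\frkd_{i,k}\|^2/\t^2$, and the martingale large-deviation Lemma \ref{martingale} gives the tail bounds $Prob\{\max_i\|\bar{\g}_G(\bar{x}_i)-\bar{\g}_{f_{\mu}}(\bar{x}_i)\|^2\ge\lambda\c_{\w{T}}/\t\}\le S/\lambda$ and $Prob\{\|\bar{\g}_G(\bar{x}_*)-\bar{\g}_{f_{\mu}}(\bar{x}_*)\|^2\ge\lambda\c_{\w{T}}/\t\}\le 1/\lambda$, as in \eqref{two3}–\eqref{two4}. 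For the fourth term, Lemma \ref{lemma2} with the estimate \eqref{fmu-f} and \eqref{condmu} gives $\max_i\|\bar{\g}_{f_{\mu}}(\bar{x}_i)-\bar{\g}_f(\bar{x}_i)\|^2\le \frac{\mu^2}{4}L_f^2(n+3)^3\le \frac{D_{\Phi}^2L_f^2 b(n+4)}{4\w{T}}$, the analogue of \eqref{inef}.

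Finally I would assemble: combining the four bounds with coefficients $8,4,4,12$ from the displayed decomposition and a union bound over the failure events yields
\be
\nono Prob\bigg\{\|\bar{\g}_f(\bar{x}_*)\|^2\ge 16b\ooo_L\textbf{C}_{\w{T}}+\frac{3D_{\Phi}^2L_f^2 b(n+4)}{\w{T}}+\frac{32(n+4)\lambda}{\t}\Big(2M^2+\sigma^2+\frac{D_{\Phi}^2L_f^2 b}{\w{T}}\Big)\bigg\}\le \frac{S+1}{\lambda}+2^{-S},
\ee
noting that $8\cdot\frac{\c_{\w{T}}}{4(n+4)}=2(2M^2+\sigma^2+D_{\Phi}^2L_f^2 b/\w{T})$ and $4+12=16$ match the $32(n+4)$ and the $16b\ooo_L$, $3$ coefficients in the statement after the arithmetic is carried out. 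I do not anticipate a genuine obstacle here — the proof is a transcription of Theorem \ref{promain1} with $\textbf{B}_{\w{T}}\leadsto\textbf{C}_{\w{T}}$ and $\w{L}\leadsto\ooo_L$; the one point requiring care is bookkeeping the constants so that the $\ooo_L$ (rather than $\w{L}$) and the exact numerical coefficients $16,3,32$ come out as claimed, and making sure the $b$-dependence entering through \eqref{condmu} is tracked consistently in $\c_{\w{T}}$ and in the fourth term.
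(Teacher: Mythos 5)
Your proposal is correct and is essentially the paper's own argument: the paper explicitly omits the details, stating that the proof repeats Theorem \ref{promain1} with $\ooo_L$ and $\textbf{C}_{\w{T}}$ substituted for $\w{L}$ and $\textbf{B}_{\w{T}}$, which is exactly what you carried out, including the correct tracking of the extra $b$ from \eqref{condmu}. (Only a trivial slip in your aside: $8\cdot\c_{\w{T}}/(4(n+4))=8\big(2M^2+\sigma^2+D_{\Phi}^2L_f^2b/\w{T}\big)$, not $2(\cdot)$, but your displayed final bound with the coefficient $32(n+4)$ is the right one.)
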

\begin{proof}
The proof follows from the similar procedure with Theorem \ref{promain1} once the parameters $\ooo_L$, $\textbf{C}_{\w{T}}$ are replaced in appropriate positions. For saving space, the detailed proof is omitted.
\end{proof}
In what follows, we give a detailed selection of parameters for 2-ZS-BCCG method. The following parameters selection rule achieves an improved complexity than \eqref{comple2} on confidence level $\Lambda$.
\begin{cor}\label{compcom}
Under assumptions of Theorem \ref{promain2}, let $\eee>0$ and $\Lambda\in(0,1)$, suppose the parameters $(S,\w{T},\t)$ are selected as
\bea
\nono&&S_c(\Lambda)=\big\lceil \log_2(\f{2}{\Lambda})\big\rceil,\label{Sc}\\
\nono&&\w{T}_c(\epsilon)=\bigg\lceil\max\bigg\{\ooo_L(n+4), \f{\ooo_L^2(n+4)(2M^2+\sigma^2)}{\w{D}^2},\f{8^2\ooo_Lb(n+4)[17L_f^2D_{\Phi}^2+8(\hat{L}^2D_{\Phi}^2+\hat{L})]}{\epsilon},\\
\nono&&\ \ \ \ \ \ \ \ \ \  b^2\ooo_L^2\bigg [\f{8\cdot32\sqrt{(n+4)(2M^2+\sigma^2)}}{\epsilon}\bigg(2\w{D}+\f{\hat{L}^2D_{\Phi}^2+\hat{L}}{\w{D}}\bigg)\bigg]^2\bigg\} \bigg\rceil, \label{wtc}\\
\nono&&\t_c(\epsilon,\Lambda)=\bigg\lceil\f{32(n+4)\cdot2(S+1)}{\Lambda}\max\bigg\{1,\f{16(2M^2+\sigma^2)}{\epsilon}\bigg\} \bigg\rceil,  \label{gtc}
\eea
then the 2-ZS-BCCG computers an $(\eee,\Lambda)$-solution of problem \eqref{pro2} after taking at most total number of calls
\be
S_c(\Lambda)[\w{T}_c(\eee)+\t_c(\eee,\Lambda)] \label{comp1}
\ee
to SZO.
\end{cor}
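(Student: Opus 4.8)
The plan is to follow verbatim the two-step template of Corollary~\ref{compe}. First, the SZO budget is accounted for directly from the algorithm: the optimization phase consists of $S_c(\Lambda)$ independent runs of approximate ZS-BCCG, each costing at most $\w{T}_c(\eee)$ calls (since each run has $T=\lfloor\w{T}_c/T'\rfloor$ iterations of batch size $T'$), and the post-optimization phase evaluates $\bar{\g}_G(\bar{x}_i)$ once for each of the $S_c(\Lambda)$ candidates, each such evaluation of $G_{\mu,\t}(\bar{x}_i)$ using $\t_c(\eee,\Lambda)$ calls; hence the budget is at most $S_c(\Lambda)\big[\w{T}_c(\eee)+\t_c(\eee,\Lambda)\big]$, and it remains only to show that the returned point $\bar{x}_{*}$ is an $(\eee,\Lambda)$-solution of \eqref{pro2}.

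For the latter I would invoke the large-deviation estimate of Theorem~\ref{promain2} with $\lambda=2(S_c+1)/\Lambda$ and verify three bounds. (i) Since the choice of $\w{T}_c(\eee)$ forces $\w{T}_c\geq\ooo_L(n+4)$ and $\w{T}_c\geq\ooo_L^2(n+4)(2M^2+\sigma^2)/\w{D}^2$, the constants in \eqref{ct} satisfy $\kkk_1=\kkk_2=1$; using this together with the remaining two entries of the maximum defining $\w{T}_c$, a term-by-term estimate of \eqref{ct} yields $\textbf{C}_{\w{T}_c}\leq 3\eee/(64b\ooo_L)$, whence $16b\ooo_L\textbf{C}_{\w{T}_c}+\f{3D_{\Phi}^2L_f^2b(n+4)}{\w{T}_c}<\f{7\eee}{8}$, exactly as in \eqref{e1}. (ii) Substituting $\lambda=2(S_c+1)/\Lambda$ and the defining formula for $\t_c(\eee,\Lambda)$ into the post-optimization term gives $\f{32(n+4)\lambda}{\t_c}\big[2M^2+\sigma^2+\f{D_{\Phi}^2L_f^2b}{\w{T}_c}\big]<\f{\eee}{8}$, as in \eqref{e2}. (iii) With $S_c=\lceil\log_2(2/\Lambda)\rceil$ one has $2^{-S_c}\leq\Lambda/2$ and $(S_c+1)/\lambda=\Lambda/2$.

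Adding (i) and (ii) shows the threshold appearing in Theorem~\ref{promain2} is strictly below $\eee$, while (iii) shows the corresponding failure probability $\f{S_c+1}{\lambda}+2^{-S_c}$ is at most $\Lambda$; combining the two,
\[
Prob\big\{\|\p(\bar{x}_{*},\nabla f(\bar{x}_{*}),\aaa_{R_{*}})\|^2\geq\eee\big\}\leq\Lambda,
\]
which is the assertion. The only substantive work is the term-by-term verification in (i): one splits $\textbf{C}_{\w{T}}$ of \eqref{ct} into its $\o(1/\sqrt{\w{T}})$ and $\o(1/\w{T})$ parts, bounds the first against the fourth argument of the maximum in the definition of $\w{T}_c$ and the second against the third argument, and carefully propagates the constants $8,16,32,64$ and the factor $\ooo_L=\hat{L}/\check{L}+2$ so that the resulting constant lands below $3/64$; this is the analogue of the computation in the proof of Corollary~\ref{compe} with $(\textbf{C}_{\w{T}},\ooo_L)$ in place of $(\textbf{B}_{\w{T}},\w{L})$, and no new idea is needed beyond it.
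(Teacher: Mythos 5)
Your proposal is correct and follows essentially the same route as the paper: it plugs the parameter choices into Theorem \ref{promain2} with $\lambda=2(S_c+1)/\Lambda$, bounds the $1/\sqrt{\w{T}}$ and $1/\w{T}$ parts of $\textbf{C}_{\w{T}}$ against the fourth and third arguments of the maximum defining $\w{T}_c(\eee)$ (with $\kkk_1=\kkk_2=1$ guaranteed by the first two arguments) to get the threshold below $\f{7\eee}{8}+\f{\eee}{8}=\eee$, and uses $S_c=\lceil\log_2(2/\Lambda)\rceil$ to make the failure probability $\f{S_c+1}{\lambda}+2^{-S_c}\leq\Lambda$, exactly as in the paper's adaptation of Corollary \ref{compe}. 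The only addition is your explicit accounting of the SZO budget, which the paper treats as obvious.
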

\begin{proof}
By using the selection rule of $S_c(\Lambda)$, $\w{T}_c(\eee)$, $\t_c(\eee,\Lambda)$, we have
\bea
\nono&&\textbf{C}_{\w{T}}\leq\f{8\sqrt{(n+4)(2M^2+\sigma^2)}}{\sqrt{\w{T}}}\bigg(2\w{D}+\f{\hat{L}^2D_{\Phi}^2+\hat{L}}{\w{D}}\bigg)+\f{(n+4)\big[17L_f^2D_{\Phi}^2+8(\hat{L}^2D_{\Phi}^2+\hat{L})\big]}{\w{T}}\\
\nono&&\ \ \ \ \ \leq\f{\eee}{32\ooo_Lb}+\f{\eee}{64\ooo_Lb}=\f{3\eee}{64\ooo_Lb}=\f{3\eee}{64},
\eea
which also implies that
\be
\nono 16b\ooo_L\textbf{C}_{\w{T}}+\f{3D_{\Phi}^2L_f^2b(n+4)}{\w{T}}\leq\f{3\eee}{4}+\f{\eee}{8^2\cdot17}<\f{7\eee}{8}.
\ee
By setting $\lambda=2(S+1)/\Lambda$, using the selection rule of $S_c(\Lambda)$, $\w{T}_c(\eee)$, $\t_c(\eee,\Lambda)$ again, and noting that $\ooo_L\geq3$, we have
\be
\nono \f{32(n+4)\lambda}{\t}\bigg[\f{D_{\Phi}^2L_f^2b}{\w{T}}+2M^2+\sigma^2\bigg]\leq\f{\eee}{17\times8^2}+\f{\eee}{16}<\f{\eee}{8}.
\ee
Finally, it follows that
\bea
\nono &&Prob\big\{\|\p(\bar{x}_{*},\nabla f(\bar{x}_{*}),\aaa_{R_{*}})\|^2\geq\eee\big\}\leq Prob\bigg\{\|\p(\bar{x}_{*},\nabla f(\bar{x}_{*}),\aaa_{R_{*}})\|^2\geq16b\ooo_L\textbf{C}_{\w{T}_c(\eee)} \\
\nono&&\ \ \ \ \ \ \ \ \ \ \ \ \ \ \ \ \ \ \ \ \ \ \ \ \ \ \ \ \ \ \ \ \ +\f{3D_{\Phi}^2L_f^2b(n+4)}{\w{T}_c(\eee)} +\f{32(n+4)\lambda}{\t_c(\eee,\Lambda)}\bigg[2M^2+\sigma^2+\f{D_{\Phi}^2L_f^2b}{\w{T}_c(\eee)}\bigg] \bigg\} \\
\nono&&\ \ \ \ \ \ \ \ \ \ \ \ \ \ \ \ \ \ \ \ \ \ \ \ \ \ \ \ \ \ \ \ \ \ \ \ \ \ \ \ \ \ \ \ \ \ \leq \f{S+1}{\lambda}+2^{-S}\leq \f{\Lambda}{2}+\f{\Lambda}{2}=\Lambda,
\eea
which concludes the proof.
\end{proof}
Corollary \ref{compcom} indicates that the complexity bound in \eqref{comp1} of 2-ZS-BCCG for finding an $(\eee,\Lambda)$-solution of composite problem \eqref{pro2} for generalized gradient, can be bounded by
\bea
\nono&&\o\bigg\{\f{bn\ooo_L[(L_f^2+\hat{L}^2)D_{\Phi}^2+\hat{L}]}{\eee}\log_2\bigg(\f{1}{\Lambda}\bigg)+\f{(M^2+\sigma^2)n}{\Lambda\eee}\log_2^2\bigg(\f{1}{\Lambda}\bigg) \\
\nono&&\ \ \ \ \ \ \ \ \ \ +\f{b^2n\ooo_L^2(M^2+\sigma^2)}{\eee^2}\bigg(\w{D}+\f{\hat{L}^2D_{\Phi}^2+\hat{L}}{\w{D}}\bigg)\log_2\bigg(\f{1}{\Lambda}\bigg)\bigg\},
\eea
which stands as one of the main contributions of this work. The appearance of $\ooo_L$ shows that, in contrast to ZS-BCD and ZS-BMD, the the complexity of ZS-BCCG depends not only on the upper bound $\hat{L}$ of block Lipschitz estimations of objective function $f$ but also their lower bound $\check{L}$. If we do not take the effect of estimations of $L_s$, $s=1,2,...,b$, $L_f$ and $D_{\Phi}$ into consideration for the moment, the above complexity bound results in \eqref{commain}, improving the complexity bound \eqref{comb1} of ZS-BCD for unconstrained optimization. To the best of our knowledge, this is the first work to consider the two-phase BCCG technique to achieve $(\eee,\Lambda)$-solution in stochastic optimization literature. The convergence and complexity results are new. Meanwhile, the results are convenient for nonconvex setting, where BCCG methods have not been proposed before.

\section{Concluding remarks}
In this work, we develop several new classes of zeroth-order block coordinate type algorithms for solving nonconvex optimization problems and analyze them in several aspects. Specifically, by incorporating randomization scheme, we first develop ZS-BCD algorithm for solving classical unconstrained nonconvex stochastic optimization problem, then ZS-BMD algorithm and ZS-BCCG algorithm for solving constrained nonconvex stochastic composite optimization problem. For each of the algorithms, the rate of convergence and corresponding complexity bound for finding $\eee$-stationary point are achieved. To improve the complexity results which directly comes from corresponding Markov inequality for finding $(\eee,\Lambda)$-solution, we further develop two-phase optimization schemes for both of these two classes of methods. The improved explicit complexity bounds are achieved.  These complexity results are new in block coordinate method literature. The methods are suitable for optimization problems when only stochastic zeroth-order information is available. The analysis in this work has shown considerable theoretical value of the proposed methods and the potential practical value is expected to be further explored in the future.

Some future work based on this work may be considered. (1) In this paper, as a whole, selection rules of random variables $i_k$ of all algorithms are in an i.i.d manner. Note that, in some application setting like distributed optimization, sometimes i.i.d random and cyclic selections are infeasible or quite costly. It is necessary to consider a non-i.i.d and non-cyclic selection rule for $i_k$ to overcome the difficulty. A typical variable example is Markov chain $i_k$ (see e.g., \cite{b5}). It is interesting to consider the possibility to propose a zeroth-order stochastic Markov chain block coordinate type method. It is also technically not easy to make this extension. (2) Recently, the class of weakly smooth functions has become popular to act as a basic function class to present theoretical optimization results. It is expected that the block two-phase results in this paper can be further extended to optimization problem of nonconvex weakly smooth functions. (3) It is possible to apply ZS-BMD and ZS-BCCG to obtain convergence results for convex case. Accelerated ZS-BMD and ZS-BCCG algorithms are possible to be established for convex case. (4) Online optimization has become one of the core topics in machine learning research. It is expected to develop the proposed methods to online setting to face increasing challenges from statistical machine learning.

\section{Appendix}

\noi \textbf{Proof of Lemma \ref{lemma1}:}
\begin{proof}
By using the optimality condition of \eqref{P}, there exists an $h\in\partial\chi_s(U_s^Tx^{+})$, such that
\be
\nono\nn g^{(s)}+\f{1}{\aaa}\big[\nabla\phi_s(U_s^Tx^{+})-\nabla\phi_s(U_s^Tx)\big]+h,\f{1}{\aaa}\big(y-U_s^Tx^{+}\big)\mm\geq0, \ \forall y\in X_s.
\ee
Set $y=U_s^Tx$ in above inequality, it follows that
\bea
\nono&&\nn g^{(s)}, \p_s(x,U_sg^{(s)},\aaa)\mm\geq\f{1}{\aaa^2}\nn\nabla\phi_s(U_s^Tx^+)-\nabla\phi_s(U_s^Tx), U_s^T(x^+-x)\mm \\
&&\ \ \ \ \ \ \ \ \ \ \ \ \ \ \ \ \ \ \ \ \ \ \ \ \ \ \ \ \ \ \ \ +\f{1}{\aaa}\big[\chi_s(U^Tx^+)-\chi_s(U_s^Tx)\big].
\eea
Then the lemma follows by using the 1-strong convexity of $\phi_s$ and the definition of $\p_s(x,g,\aaa)$.
\end{proof}
For saving space, in following two proofs, denote $x_1^+=P(x,g_1,\aaa)$ and $x_2^+=P(x,g_2,\aaa)$.

\noi \textbf{Proof of Lemma \ref{lemmap}:}
\begin{proof}
By using the optimality condtion of \eqref{P}, there exist $h_1\in\partial\chi_s(U_s^Tx_1^+)$, and $h_2\in\partial\chi_s(U_s^Tx_2^+)$ such that
\bea
&&\nn g_1^{(s)}+\f{1}{\aaa}\big[\nabla\phi_s(U_s^Tx_1^+)-\nabla\phi_s(U_s^Tx)\big]+h_1,y-U_s^Tx_1^+\mm\geq0, \ \forall y \in X_s, \label{opt1}\\
&&\nn g_2^{(s)}+\f{1}{\aaa}\big[\nabla\phi_s(U_s^Tx_2^+)-\nabla\phi_s(U_s^Tx)\big]+h_2,y-U_s^Tx_2^+\mm\geq0, \ \forall y \in X_s.  \label{opt2}
\eea
Let $y=U_s^Tx_2^+$ in \eqref{opt1}, the convexity of $\chi_s$ implies
\bea
\nono&&\nn g_1^{(s)},U_s^T(x_2^+-x_1^+)\mm\\
\nono&&\geq\f{1}{\aaa}\nn\nabla\phi_s(U_s^Tx)-\nabla\phi_s(U_s^Tx_1^+),U_s^Tx_2^+-U_s^Tx_1^+\mm+\nn h_1,U_s^Tx_2^+-U_s^Tx_1^+\mm \\
\nono&&\geq\f{1}{\aaa}\nn\nabla\phi_s(U_s^Tx)-\nabla\phi_s(U_s^Tx_2^+),U_s^Tx_2^+-U_s^Tx_1^+\mm+\chi_s(U_s^Tx_1^+)-\chi_s(U_s^Tx_2^+)\\
&&\ \ \ +\f{1}{\aaa}\nn\nabla\phi_s(U_s^Tx_2^+)-\nabla\phi_s(U_s^Tx_1^+),U_s^Tx_2^+-U_s^Tx_1^+\mm. \label{opt01}
\eea
Similarly, by setting $y=U_s^Tx_1^+$ in \eqref{opt2}, we have
\bea
\nono&&\nn g_2^{(s)},U_s^T(x_1^+-x_2^+)\mm\\
&&\geq\f{1}{\aaa}\nn\nabla\phi_s(U_s^Tx)-\nabla\phi_s(U_s^Tx_2^+),U_s^Tx_1^+-U_s^Tx_2^+\mm+\chi_s(U_s^Tx_2^+)-\chi_s(U_s^Tx_1^+). \label{opt02}
\eea
Sum up \eqref{opt01} and \eqref{opt02} and note the 1-strongly convexity of $\phi_s$, it follows that
\be
\f{1}{\aaa}\|U_s^Tx_2^+-U_s^Tx_1^+\|_s^2\leq\nn g_1^{(s)}-g_2^{(s)},U_s^T(x_2^+-x_1^+)\mm\leq\|g_1^{(s)}-g_2^{(s)}\|_s\cdot\|U_s^Tx_2^+-U_s^Tx_1^+\|_s,
\ee
which implies Lemma \ref{lemmap}.
\end{proof}

\noi \textbf{Proof of Lemma \ref{lemma2}:}
\begin{proof}
Follow the definition of $P_s(x,g,\aaa)$ in \eqref{PP},
\bea
\nono&&\|\p_s(x,g_1,\aaa)-\p_s(x,g_2,\aaa)\|_s=\|\f{1}{\aaa}(x-U_s^Tx_1^+)-\f{1}{\aaa}(x-U_s^Tx_2^+)\|_s\\
&&\ \ \ \ \ \ \ \ \ \ \ \ \ \ \ \ \ \ \ \ \ \ \ \ \ \ \ \ \ \ \ \ \ \ \ \ \ \ \ =\f{1}{\aaa}\|U_s^Tx_1^+-U_s^Tx_2^+\|_s,
\eea
which concludes the proof after using Lemma \ref{lemmap}.
\end{proof}

\end{document}